\def\squarebox#1{\hbox to #1{\hfill\vbox to #1{\vfill}}}
\newcommand{\R}{{\mathbb R}}
\newcommand{\C}{{\mathbb C}}
\newcommand{\N}{{\mathbb N}}
\renewcommand{\Re}{\mathop{\rm Re}\nolimits}
\renewcommand{\Im}{\mathop{\rm Im}\nolimits}
\theoremstyle{plain}
\newtheorem{thm}{Theorem}[section]
\newtheorem{lem}{Lemma}[section]
\newtheorem{rem}{Remark}
\newtheorem{prop}{Proposition}[section]
\renewcommand{\theprop}{\thesection.\arabic{prop}}
\renewcommand{\theequation}{\thesection.\arabic{equation}}
\renewcommand{\therem}{\thesection.\arabic{rem}}
\numberwithin{equation}{section}
\begin{document}

\title[eigenvalues and resonances of dissipative operator] {Eigenvalues and resonances of dissipative acoustic operator for strictly convex obstacles}

\author[V. Petkov]{Vesselin Petkov}


\address{Universit\'e de Bordeaux, Institut de Math\'ematiques de Bordeaux, 351, Cours de la Lib\'eration,
33405 Talence, France}
\email{petkov@math.u-bordeaux.fr} 

\def\re{\mbox{\rm Re}\:}
\def\im{\mbox{\rm Im}\:}
\def\R{{\mathbb R}}
\def\C{{\mathbb C}}
\def\S{{\mathbb S}}
\def\ts{\tilde{\sigma}}
\def\p{{\mathcal P}}
\def\hc{{\mathcal H}}
\def\pa{\partial}
\def\om{\omega}
\def\ii{{\bf i}}
\def\rc{{\mathcal R}}
\def\nc{{\mathcal N}}
\def\cc{{\mathcal C}}
\def\tr{\tilde{r}_{0}}
\def\oph{Op_{h}(\chi_{h})}
\def\fh{f^{-}_{\delta}}
\def\f3{\frac{3}{2}\delta}
\def\ep{\epsilon}
\def\tl{\tilde{\lambda}}
\def\hd{\la hD\ra}
\def\supp{{\rm supp}\:}
\def\th{\tilde{h}}
\def\lc{{\mathcal L}}
\def\pu{\partial_t u}
\def\la{\langle}
\def\ra{\rangle}
\def\12{\frac{1}{2}}
\def\tl{\tilde{\lambda}}
\def\lx{\la x \ra}
\def\phi{\varphi}
\def\epsilon{\varepsilon}
\def\kappa{\varkappa}
\def\ep{\epsilon}
\def\lr#1{\langle{#1}\rangle}
\def\dif{\partial}
\def\lam{\lambda}
\def\al{\alpha}
\def\be{\beta}
\def\gam{\gamma}
\def\R{{\mathbb R}}
\def\Ss{{\mathbb S}}
\def\dem{\noindent    {\bf Proof.} }
\def\esp{\vspace{8pt}}
\def\dc{{\mathcal D}}
\def\oc{{\mathcal O}}
\def\Op{{\rm Op}}

\maketitle
\begin{abstract}
We examine the wave equation in the exterior of a strictly convex bounded domain $K$ with dissipative boundary condition $\pa_{\nu} u - \gamma(x) \pa_t u = 0$ on the boundary $\Gamma$ and $0 < \gamma(x) <1, \:\forall x \in \Gamma.$ The solutions are described by a contraction semigroup $V(t) = e^{tG}, \: t \geq 0.$ The  poles $\lambda$ of the meromorphic incoming resolvent $(G - \lambda)^{-1}: \: \hc_{comp} \rightarrow \dc_{loc}$  are eigenvalues of G if $\Re \lambda < 0$ and incoming resonances if $\Re \lambda > 0$. We obtain sharper results for the location of the eigenvalues of $G$ and incoming resonances in $\Lambda = \{\lambda \in \C:\: |\Re \lambda| \leq C_2(1 + |\Im \lambda|)^{-2},\: |\Im \lambda| \geq A_2 > 1\}$ and we prove a Weyl formula for their asymptotic. For $K = \{x \in \R^3:\:|x| \leq 1\}$ and $\gamma$ constant we show that $G$ has no eigenvalues so the Weyl formula concerns only the incoming resonances.
\end{abstract}

{\bf Keywords}: Dissipative boundary conditions, asymptotic of eigenvalues, incoming resonances \\

{\bf Mathematics Subject Classification 2020}:  35P20, 35P25, 47A40, 58J50

\section{Introduction}

Let $K \subset \R^d,$ $d \geq 3$, $d$ odd, be a bounded non-empty domain with $C^{\infty}$ strictly convex boundary $\Gamma.$   Let $\Omega = \R^d \setminus \bar{K}$ be connected and $K \subset \{x \in \R^d:\:|x| < \rho_0\}.$ 
Consider the boundary problem
\begin{equation} \label{eq:1.1}
\begin{cases} u_{tt} - \Delta_x u = 0 \: {\rm in}\: \R_t^+ \times \Omega,\\
\partial_{\nu}u - \gamma(x) \pa_t u= 0 \: {\rm on} \: \R_t^+ \times \Gamma,\\
u(0, x) = f_1, \: u_t(0, x) = f_2 \end{cases}
\end{equation}
with initial data $(f_1, f_2) \in \hc = H^1(\Omega) \times L^2(\Omega).$
Here $\nu(x)$ is the unit outward normal at $x \in \Gamma$ pointing into $\Omega$ and $\gamma(x) > 0$ is a $C^{\infty}$ function on $\Gamma.$ The solution of the problem (\ref{eq:1.1}) has the form $V(t)f = e^{tG} f,\: t \geq 0$, where $V(t)$ is a contraction semi-group in ${\mathcal H}$ with  generator
$$ G = \Bigl(\begin{matrix} 0 & 1\\ \Delta & 0 \end{matrix} \Bigr).$$
The operator $G$  has domain  $\dc$ given by the closure in the graph norm
$$|\|f\| | = (\|f\|_{{\mathcal H}}^2 + \|G f\|^2_{{\mathcal H}})^{1/2} $$
 of functions $f = (f_1, f_2) \in C_{(0)}^{\infty} (\R^d) \times C_{(0)}^{\infty} (\R^d)$ satisfying the boundary condition $\partial_{\nu} f_1 - \gamma f_2 = 0$ on $\Gamma.$ It is well known  that the point spectrum $\sigma_p(G)$ of $G$ in $\C_{-} = \{z \in \C:\:\Re z < 0\}$ is formed by isolated eigenvalues with finite multiplicity, $\sigma_p(G) \cap \ii \R = \emptyset$ and $G$ has continuous spectrum $\sigma_c(G) = \ii \R$ (see Section 8, \cite{LP1}).
Notice that if $Gf =\lambda f$ with $0 \neq f \in \mathcal D, \: \Re \lambda < 0,$ we have 
\begin{equation} \label{eq:1.2}
\begin{cases} (\Delta - \lambda^2) f_1 = 0 \:{\rm in}\: \Omega,\\
\partial_{\nu} f_1 -  \lambda \gamma f_1 = 0\: {\rm on}\: \Gamma \end{cases}
\end{equation}
and $u(t, x) = V(t) f = e^{\lambda t} f(x) $ is a solution of (\ref{eq:1.1}) with exponentially decreasing global energy. Such solutions are called {\bf asymptotically disappearing} and their existence  is important for the inverse scattering problems. We refer to \cite{P1}, \cite{P2}, \cite{P3} for a description of the relation of asymptotically disappearing solutions  to scattering theory. On the other hand, a solution $u(t,x)$ of (\ref{eq:1.1}) is called {\bf disappearing} if there exists $T > 0$ such that $u(t, x) \equiv 0$ for $t \geq T.$ The existence of a such solution implies that the space
$$\hc(T) = \{u(t, x):\: u(t, x) \equiv 0\: {\rm for}\: t \geq T\}$$
is infinite dimensional. Majda \cite{Ma1} proved that in the case $0 \leq \gamma (x) < 1, \forall x \in \Gamma$ and $C^{\infty}$ boundary $\Gamma$ and in the case $\gamma(x) > 1, \forall x \in \Gamma$ and real analytic boundary $\Gamma$ there are no disappearing solutions.

 Let ${\mathscr S}(z): L^2({\mathbb S}^{d-1}) \rightarrow L^2({\mathbb S}^{d-1})$ be the scattering operator defined in Section 3, \cite{LP1} for $d$ odd.  The existence of $z_0, \im z_0 > 0,$ such that ${\rm Ker} {\mathscr S}(z_0) \neq \{0\}$ implies that $\ii z_0 \in \sigma_p(G)$ (see Theorem 5.6 in \cite{LP1}). Consequently, if $\sigma_p(G) = \emptyset,$ the operator ${\mathscr S} (z)$ has trivial kernel at all regular points of ${\mathscr S}(z)$ with $\Im z \geq 0.$ (The same property holds for the scattering operators related to Dirichlet and Neumann problems since ${\mathscr S}^{-1} (z) = {\mathscr S}^*(\bar{z})$ for these problems, provided that ${\mathscr S}^{-1}(z)$ exists). In  \cite{LP1} the  energy space $\hc$ was decomposed with three orthogonal parts
$$\hc = D_{-}^{a} \oplus K^{a} \oplus D_{+}^a, \: a > \rho_0$$
  and we have the relations 
    \begin{eqnarray} \label{eq:1.3}
 U_0(-t) V(t) g = V^*(t) U_0(t) g = g, \: g \in D_{+}^a,\nonumber\\
  U_0(t) V^*(t) g = V(t) U_0(-t) g = g,\: g \in D_{-}^a,
 \end{eqnarray}
$ U_0(t)$ being the unitary group related to Cauchy problem for the free wave equation  in $\R^d$ (see Section 2 in \cite{LP1} for the definition of $D_{\pm}^a, \: K^a$ and the above relations).  A function $f \in \hc$ is incoming (resp. outgoing) if its component in $D_{+}^a$ (resp. $D_{-}^a$) vanishes. It is easy to see that if $f$ is an eigenfunction of $G$ with eigenvalues $\lambda$ , then $f$ is incoming. Indeed, for every $g \in D_{+}^a$ we get
  \[ (f, g) = (f, V^*(t) U_0(t) g) = (e^{\lambda t} f, U_0(t) g) \rightarrow 0 \: {\rm as}\: t \to \infty\]
   and the solution $V(t) f$ remains incoming for all $t \geq 0$. Here $(\bullet , \bullet)$ denotes the scalar product in $\hc.$ This leads to difficulties in the inverse scattering problems.

The location in $\C_{-}$ of the eigenvalues of $G$ has been studied in \cite{P1} improving previous results of Majda \cite{Ma}. It was proved in \cite{P1} that if $K$ is the unit ball $B_3 =\{x \in \R^3: \;|x|\leq 1\}$ and $\gamma \equiv 1,$ the operator $G$ has no eigenvalues. For this reason we study the cases 
$$(A): \max_{x \in \Gamma}  \gamma(x)  < 1,\:(B): \min_{x \in \Gamma} \gamma(x) > 1.$$
 
The case (B) has been examined in \cite{P1} and it was proved that for every $0 < \ep \ll 1$ and every $N  \in \N, \: N \geq 1$ the eigenvalues lie in $\Lambda_{ \ep} \cup {\mathcal R}_{N}$, where 
$$\Lambda_{\ep} = \{z \in \C: \: |\Re z | \leq C_{\ep} (1 + |\Im z|^{1/2 + \ep} ), \: \Re z < 0\},$$
$${\mathcal R}_N = \{z \in \C:\: |\Im z | \leq A_N (1 + |\Re z|)^{-N} ,\: \Re z < 0\}.$$
Moreover, it was shown in \cite{P1} that for strictly convex obstacles $K$ there exists $R_0 > 0$ such that the eigenvalues lie in $\{z \in \C:\: |z| \leq R_0,\: \Re z < 0\} \cup {\mathcal R}_N.$ Next, for obstacles with arbitrary geometry a Weyl formula for the asymptotic of the eigenvalues lying in
$${\mathcal R} = \{ \lambda \in \C:\: |\Im \lambda| \leq C_1( 1 + |\Re \lambda|)^{-2}, \: \Re \lambda\leq -C_0 \leq -1\}$$ 
has been  established in \cite{P2}. This formula has the form
\begin{eqnarray*} 
\sharp \{ \lambda_j \in \sigma_p(G) \cap {\mathcal R}:\: |\lambda_j | \leq r, \: r \geq C_{\gamma}\} \nonumber \\
=  \frac{\omega_{d-1}}{(2 \pi)^{d -1}}\Bigl( \int_{\Gamma} (\gamma^2(x) - 1)^{(d-1)/2} dS_x\Bigr) r^{d-1} + {\mathcal O}_{\gamma} (r^{d-2}),\: r \to \infty,
\end{eqnarray*} 
$\omega_{d-1}$ being the volume of the unit ball $\{x \in \R^{d-1}: |x| \leq 1\}.$ A similar result for the Maxwell system with dissipative boundary conditions and $\gamma(x) \neq 1, \forall x \in \Gamma,$ has been proved in \cite{P3}. 

It is important to note that in the case (B) the properties of the exterior Dirichlet-to Neumann operator only in the elliptic region ${\mathscr E}$ are important (see Section 3 for the definition the hyperbolic, glancing and elliptic regions ${\mathscr H}, \: {\mathscr G}, \: {\mathscr E},$ respectively). The analysis in \cite{P2} was similar to  that in \cite{SjV}, where the operator on the boundary is not invertible in some manifold included in the elliptic region. This phenomenon creates Rayleigh resonances for elasticity operator.

The study of the case (A) is more difficult and only some results for the location of eigenvalues are known. In \cite{P1} the previous result of Majda \cite{Ma} has been improved and it was shown that the eigenvalues  of $G$ for every $0 < \ep \ll 1$ are included in the region $\Lambda_{\ep}.$
By using the results in \cite{V1}, it is possible to improve the eigenvalue free regions replacing $\Lambda_{\ep}$ by $\{z \in \C:\: -A_0 \leq \Re z < 0\}$ with sufficiently large $A_0 > 0.$ The same eigenvalues free region for strictly convex obstacles $K$ and $d = 3$ can be obtained from Theorems 2.1 and 2.2 in \cite{I}.

The eigenvalues in the half plane $\C_{-}$ are the poles of the meromorphic incoming resolvent $(G - \lambda)^{-1}: \hc \rightarrow \dc$  (see Section 2 for the definition of incoming/outgoing solutions). Since $G$ has continuous spectrum on $\ii \R,$ it is convenient to extend the incoming resolvent. Let  $R_D(\lambda) = (-\Delta_D  + \lambda^2)^{-1}: L^2(\Omega) \rightarrow {\bf D}$ be the incoming resolvent of the Dirichlet Laplacian with domain ${\bf D}$ which is analytic in $\C_{-}$. The resolvent $R_D(\lambda) : L^2(\Omega)_{comp} \rightarrow {\bf D} _{loc}$ has a meromorphic continuation in $\C$ and the same is true for the incoming resolvent related to Neumann problem. These properties make possible to extend the incoming resolvent $(G - \lambda)^{-1} : \hc_{comp} \rightarrow \dc_{loc}$ from $\C_{-}$ to $\C$  as meromorphic function (see Section 2). The poles of this continuation in $\C_{+} = \{ z \in \C: \:\Re z > 0\}$ are called   {\bf incoming resonances} and noted by ${\rm Res}\:(G)$. 

 The outgoing resolvent
 \begin{equation} \label{eq:1.4}
 (G - \lambda)^{-1} = -\int_0^{\infty} e^{t G} e^{-\lambda t} dt,\:\Re \lambda > 0
 \end{equation} 
is analytic for $\Re\lambda > 0$. 
We obtain the {\it outgoing resonances} as the poles of the meromorphic continuation of the outgoing resolvent $(G - \lambda)^{-1}:\: \hc_{comp} \rightarrow \dc_{loc}$ for $\lambda \in \C$. These outgoing resonances  have been studied in Section 5, \cite{LP1}. Notice that $z \in \C_{-}$ is outgoing resonance if and only if $-\ii z$ is a pole of the scattering operator ${\mathscr S}(z)$ (see Lemma 5.2 and Lemma 5.4 in \cite{LP1}). 

The examination of incoming resonances is much more difficult. To our best knowledge, it seems that in the literature there are no results concerning their location and existence and for first time these problems are studied in our paper.  Obviously,
   $$((G - \lambda)^{-1})^* = (G^* - \bar{\lambda})^{-1} = -(- G^* - (- \bar{\lambda}))^{-1},$$
where $G^*= \Bigl(\begin{matrix} 0 & - 1\\ -\Delta & 0 \end{matrix} \Bigr)$ is the adjoint operator with domain $\dc^*$ determined by the boundary condition $\pa_{\nu}f_1 +\gamma(x) f_2 = 0$ on $\Gamma.$ For $\gamma(x) \neq 0$ we obtain $G \neq - G^*$ and  if $w$ is an  incoming resonance with resonance state $f$, then $- \bar{w}$ is not an outgoing resonance with resonance state $\bar{f}.$ Formally, a representation of the incoming resolvent similar to (\ref{eq:1.4}) should be given by an integral from $- \infty$ to 0 involving the solution of the problem (\ref{eq:1.1}) in $\R^{-}_t \times \Omega.$
 Changing the time $ t = -s,$ the problem in $\R^{-}_t \times \Omega$ is transformed in
\begin{equation} \label{eq:1.5}
\begin{cases} u_{tt} - \Delta_x u = 0 \: {\rm in}\: \R_t^+ \times \Omega,\\
\partial_{\nu}u + \gamma(x) \pa_t u= 0 \: {\rm on} \: \R_t^+ \times \Gamma,\\
u(0, x) = f_1, \: u_t(0, x) = f_2. \end{cases}
\end{equation}
This problem for $0 < \gamma(x)$ is not $L^2$ well posed (see for instance, Theorem 1 and Corollary 1 in \cite{Ma}), while  for $0 < \gamma(x) < 1$ the problem (\ref{eq:1.5}) is  $C^{\infty}$ well posed but with loss of regularity (see \cite{I}, \cite{I1}). This leads to difficulties if we wish to obtain an integral representation of the incoming resolvent, to develop a scattering theory related to  (\ref{eq:1.5}) and find a relation of the incoming resonances with the poles of a suitably defined  scattering operator.

Let $c_0 = \min_{x \in \Gamma}  \gamma(x), \: c_1 = \max_{x \in \Gamma} \gamma(x).$ 
 In this paper we obtain sharper results for the location of eigenvalues and incoming resonances for strictly convex obstacles.
 Our  first result is the following
 \begin{thm} Let $K$ be strictly convex obstacle and let $0 < \gamma(x) < 1,\; \forall x \in \Gamma$. There exists $R_0 > 0$ and $A_2 \gg 1$ depending on $c_0$ and $c_1$ such that for every $N \in \N, \: N \geq 1,$ the eigenvalues of $G$ are located in the region 
 $$\Bigl(\{z \in \C:\: |z| \leq R_0\} \cup {\mathcal Q}_N\Bigr) \cap \{\Re z < 0\},$$
  where
$${\mathcal Q}_N = \{z \in \C:\: | \Re z |\leq B_N (1 + |\Im z|)^{-N}, \: |\Im z| \geq  A_2\}.$$
Moreover, for every $c > 0$ there exists  $D_c > 0$ such that the  incoming resonances lying in 
$$M_c = \{z \in \C:\: 0 < \Re z \leq c \log|\im \lambda| , \: |\im \lambda| \geq D_c\}$$ 
are located in 
$$\Bigl(\{z \in \C:\: |z| \leq R_0\} \cup {\mathcal Q}_N\Bigr) \cap \{\Re z > 0\}.$$
 \end{thm}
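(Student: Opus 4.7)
\medskip
\noindent\textbf{Proof proposal.} The plan is to reduce the eigenvalue/incoming resonance problem to the invertibility of a semiclassical boundary operator and then to exploit the non-trapping billiard dynamics forced by strict convexity of $K$.

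First, a nontrivial $(f_1,f_2)\in\dc$ with $G(f_1,f_2)=\lambda(f_1,f_2)$ satisfies $(\Delta-\lambda^2)f_1=0$ in $\Omega$ together with $\pa_\nu f_1=\lambda\gamma f_1$ on $\Gamma$, and $f_1$ is incoming in the sense that it is produced by the incoming Dirichlet resolvent $R_D(\lambda)$ applied to a boundary source. For an incoming resonance in $\C_+$ the analogous statement holds with $R_D(\lambda)$ interpreted through its meromorphic continuation, and by Melrose's logarithmic resonance-free region for strictly convex Dirichlet obstacles this continuation is holomorphic on $M_c$ for suitable $c>0$. Setting $\phi=f_1|_\Gamma$, the problem becomes
\[ \bigl(\mathcal{N}(\lambda)-\lambda\gamma\bigr)\phi = 0 \quad \text{on }\Gamma, \]
where $\mathcal{N}(\lambda)$ is the exterior Dirichlet-to-Neumann map built from $R_D(\lambda)$. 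It therefore suffices to show that this boundary operator is invertible for $\lambda$ in the complement of $\{|z|\leq R_0\}\cup \mathcal{Q}_N$ (inside $M_c$ for the resonance case).

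Next, set $h=(1+|\Im\lambda|)^{-1}$ and apply the Melrose--Taylor parametrix, writing $h\mathcal{N}(\lambda)$ as a combination of semiclassical pseudodifferential operators in the elliptic region ${\mathscr E}$, Airy-type $h$-Fourier integral operators near the glancing region ${\mathscr G}$, and pseudodifferential operators in the hyperbolic region ${\mathscr H}$. Because $0<\gamma(x)<1$, the characteristic set of $h\mathcal{N}(\lambda)-h\lambda\gamma$ is
\[ \Sigma = \bigl\{(x,\xi')\in T^*\Gamma : |\xi'|^2 = 1-\gamma(x)^2\bigr\}, \]
which lies strictly inside ${\mathscr H}$ at a positive distance from ${\mathscr G}$. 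Outside a small conical neighborhood of $\Sigma$ the symbol is $h$-elliptic (the glancing piece contributes no zero and the elliptic piece is dissipative of the opposite sign from $\lambda\gamma$), and near $\Sigma$ the symbol is of real principal type.

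The final step is a propagation-of-singularities argument at $\Sigma$. The Hamilton flow of the boundary symbol projects to the broken bicharacteristic (billiard) flow on $T^*\bar\Omega$, which is non-trapping in view of the strict convexity of $K$. This yields an a priori bound
\[ \|\phi\|_{L^2(\Gamma)} \leq C h^{-M} \bigl\|(h\mathcal{N}(\lambda)-h\lambda\gamma)\phi\bigr\|_{L^2(\Gamma)} \]
with $M$ independent of $N$. Consequently, for $|\Re\lambda|\leq B_N h^N$ with $N>M$ and $B_N$ large enough, the perturbation from nonzero $\Re\lambda$ is absorbed and $\mathcal{N}(\lambda)-\lambda\gamma$ is invertible. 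The same scheme covers the incoming resonance case in $M_c$ because the non-trapping geometry is symmetric under $\mathrm{sgn}(\Re\lambda)$ and the parametrix deforms holomorphically across $\ii\R$ once $R_D(\lambda)$ does; the disc $\{|z|\leq R_0\}$ absorbs the non-semiclassical regime $|\Im\lambda|\leq A_2$.

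The hard part will be the propagation estimate: converting the strict convexity into a single polynomial loss $h^{-M}$ requires counting reflections until the billiard trajectory leaves a fixed compact set and transporting the estimate through the Airy transition across ${\mathscr G}$ without losing control of the constant. A secondary subtlety is the holomorphic continuation of the parametrix across the continuous spectrum $\ii\R$ for the incoming resonance statement, where one must combine Melrose's logarithmic bound for $R_D(\lambda)$ with the Melrose--Taylor calculus in a way that preserves both the symbolic ellipticity outside $\Sigma$ and the non-trapping propagation near $\Sigma$.
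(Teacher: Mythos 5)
Your reduction to the boundary equation $(\mathcal{N}(\lambda)-\lambda\gamma)\phi=0$ and your identification of the characteristic set $\Sigma=\{r_0(x',\xi')=1-\gamma^2(x')\}\subset{\mathscr H}$ agree with the paper, but the core of your argument is both logically reversed and unavailable. Reversed: the theorem says the eigenvalues and the incoming resonances in $M_c$ are \emph{confined to} $\{|z|\le R_0\}\cup\mathcal{Q}_N$, so what must be proved is invertibility of the boundary operator when $|\Re\lambda|> B_N(1+|\Im\lambda|)^{-N}$ (and $|\Im\lambda|\ge A_2$). You instead conclude invertibility for $|\Re\lambda|\le B_N h^N$, i.e.\ inside $\mathcal{Q}_N$; that would exclude the spectrum from $\mathcal{Q}_N$ and directly contradicts Theorem 1.2, whose Weyl law produces $\sim r^{d-1}$ eigenvalues/incoming resonances in $\Lambda\subset\{|\Re\lambda|\le C_2(1+|\Im\lambda|)^{-2}\}$. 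Unavailable: your key a priori bound $\|\phi\|\le Ch^{-M}\|(h\mathcal{N}(\lambda)-h\lambda\gamma)\phi\|$ near $\Sigma$ cannot hold uniformly for $\lambda$ close to $\ii\R$. On the imaginary axis the relevant operator is, modulo $O(h)$, the self-adjoint operator with real principal symbol $\sqrt{1-r_0}-\gamma$ on the compact manifold $\Gamma$; its Hamilton flow preserves the compact characteristic set $\Sigma\subset T^*\Gamma$ and never escapes, so strict convexity of $K$ (non-trapping of the billiard in $T^*\bar\Omega$) provides no escape function for this \emph{boundary} symbol, and the accumulation of eigenvalues/resonances at $\ii\R$ shows no polynomial-loss resolvent bound of that type exists there. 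Non-trapping enters the paper only through resolvent estimates for $R_0^-$ and $R_D$ (needed to build the parametrix and to reach $-ch|\log h|\le\Im z$, i.e.\ the region $M_c$), not through propagation on $T^*\Gamma$.

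The mechanism the paper actually uses near $\Sigma$ is different and is the step you are missing. With $\lambda=\ii\sqrt z/h$, $z=1+\ii\Im z$, one first kills the elliptic and glancing pieces: $Q^+_\delta f=0$ by a sign of the imaginary part of the elliptic parametrix, and $Q^0_\delta f=0$ by the quantitative bound $\|\mathcal{N}(z,h)Q^0_{3\delta/2}\|\lesssim\sqrt\delta+h^{1/12}$ (Vodev, Sj\"ostrand) compared with $\gamma\ge c_0$, choosing $\delta\ll c_0^2$ — your remark that ``the glancing piece contributes no zero'' needs exactly this input. Then, for the hyperbolic piece, one takes $\Im\bigl((\mathcal{N}(z,h)-\gamma\sqrt z)Q^-_\delta f,Q^-_\delta f\bigr)=0$, replaces $\mathcal{N}$ by the hyperbolic parametrix $T_N(z,h)$ with $O(h^N)$ error, uses that $T_N(1,h)$ is self-adjoint, and Taylor-expands in $\Im z$: the resulting operator has principal symbol $\tfrac12\Re\bigl((1+\ii t\Im z-\tilde r_0)^{-1/2}-\gamma(1+\ii t\Im z)^{-1/2}\bigr)$, which is elliptic and positive precisely because $\gamma\le c_1<1\le(1-\tilde r_0)^{-1/2}$ on ${\mathscr H}$. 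This forces $|\Im z|\le B_Nh^N$ whenever $Q^-_\delta f\neq0$, i.e.\ it \emph{confines} the eigenvalue or incoming resonance to $\mathcal{Q}_N$ rather than excluding it; translating back gives $|\Re\lambda|\le C_N|\Im\lambda|^{-N}$. So the theorem is an ellipticity statement about the $\Im z$-derivative of the symbol (exploiting $\gamma<1$), not a propagation/invertibility statement near $\Sigma$, and your proposal would need to be restructured around that idea.
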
 
 
 The constants $R_0, A_2$ are very large. In particular, in Section 4 we show that we must take in Theorem 1.1 
 $$A_2 \geq   \frac{1}{\min\big\{ c_0^2, \frac{\sqrt{1 - c_1^2}}{2}\big\}},$$
 hence  $A_2\nearrow + \infty$ when either $c_0 \searrow 0$ or $c_1 \nearrow 1$. These properties are natural since for Neumann problem ($\gamma \equiv 0$) we have no eigenvalues of $G$ and incoming resonances are located outside $M_c$, while for the ball $B_3 = \{x\in \R^3,\: |x| \leq 1\}$ and $\gamma \equiv 1$ there are no eigenvalues. The above theorem is similar to Theorem  2.1 in \cite{P1} concerning the case $\gamma(x) > 1,\: \forall x \in \Gamma$.  On the other hand, for the ball $B_3$ and $0 < \gamma < 1, \: \gamma \equiv {\rm const}$ in Appendix we show  that $\sigma_p(G) = \emptyset.$ We conjecture that the same is true for $0 < \gamma(x) < 1,\: \forall x \in \Gamma$ and strictly convex obstacles $K$ and hope to study this question in a further work. The statement of Theorem 1.1 concerning incoming resonances is similar to Theorem 1.1, (a) in \cite{StV} concerning outgoing resonances of the elasticity system for strictly convex obstacles.

To study the distribution of eigenvalues and incoming resonances we generalise the result in \cite{P1} (see Proposition 2.2) and prove a trace formula involving extended incoming resolvent $(G - \lambda)^{-1}: \hc_{comp} \rightarrow \dc_{loc}$ and integration over curves intersecting the imaginary axis.  Define the multiplicity of an eigenvalue or incoming resonance $\lambda$ by 
$${\rm mult}(\lambda) = {\rm tr}\frac{1}{2 \pi \ii}  \int_{|z- \lambda| = \ep}  (\lambda - G)^{-1}  dz,$$
where $0 < \ep \ll 1$ is sufficiently small and introduce the set
 $$\Lambda: = \{z \in \C: \: |\re z|  \leq C_2(1 + |\im z|)^{-2},\: \: |\im z| \geq A_2 > 1\}.$$ 
Our second result is a Weyl asymptotic for the eigenvalues and incoming resonances in $\Lambda.$

\begin{thm} Let $K$ be strictly convex and let $0 < \gamma(x) < 1,\: \forall x\in \Gamma.$ Then the counting function of the eigenvalues and incoming resonances of  $G$ lying in $\Lambda$ and taken with their multiplicities has the asymptotic
\begin{eqnarray} \label{eq:1.6}
\sharp \{ \lambda_j \in (\sigma_p(G) \cup {\rm Res}\:(G)) \cap \Lambda,\: |\lambda_j| \leq r, \: r \geq C_{\gamma}\} \nonumber \\
=  \frac{2\omega_{d-1}}{(2 \pi)^{d -1}}\Bigl( \int_{\Gamma} (1 - \gamma^2(x))^{(d-1)/2} dS_x\Bigr) r^{d-1} + {\mathcal O}_{\gamma} (r^{d-2}),\: r \to \infty,
\end{eqnarray}
$\omega_{d-1}$ being the volume of the unit ball $\{x \in \R^{d-1}:\: |x| \leq 1\}.$
\end{thm}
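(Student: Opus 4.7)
The plan is to follow the strategy developed in \cite{P2} for the case $\gamma > 1$, modified in two essential ways: the relevant region of $T^*\Gamma$ switches from the elliptic region to the hyperbolic region when $\gamma < 1$, and the contour integrals in the trace formula must cross the imaginary axis so as to capture both eigenvalues ($\Re \lambda < 0$) and incoming resonances ($\Re \lambda > 0$) simultaneously. Setting $h = 1/|\Im \lambda|$, for $\lambda \in \Lambda$ one has $|\Re \lambda| \lesssim h^2$. Using the exterior incoming resolvent $R_D(\lambda)$ of the Dirichlet Laplacian, the eigenvalue/resonance condition $(\Delta - \lambda^2) f_1 = 0$ in $\Omega$ with $\partial_\nu f_1 = \lambda \gamma f_1$ on $\Gamma$ is equivalent to the boundary equation
\[
T(\lambda) u := \mathcal{N}_{-}(\lambda) u - \lambda \gamma(x) u = 0 \quad \text{on } \Gamma,
\]
where $\mathcal{N}_{-}(\lambda)$ is the incoming exterior Dirichlet-to-Neumann operator, meromorphically continued to $\C$. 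The multiplicities of poles of $(G - \lambda)^{-1}$ match those of the zeros of the meromorphic operator family $T(\lambda)$ in the Gohberg--Sigal determinant sense.

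Next I would decompose $T^*\Gamma = \mathscr{E} \cup \mathscr{G} \cup \mathscr{H}$ into the elliptic, glancing and hyperbolic regions of the semiclassical Helmholtz operator $h^2 \Delta + 1$ near $\Gamma$ and analyse $T(\lambda)$ microlocally. In $\mathscr{E}$ the principal symbol of $h \mathcal{N}_{-}(\lambda)$ is $-\sqrt{|\xi'|^2 - 1}$, while $h \lambda \gamma(x)$ is purely imaginary to leading order, so $h T(\lambda)$ is elliptic and invertible modulo $O(h^{\infty})$ there. In $\mathscr{H}$ the principal symbol of $h \mathcal{N}_{-}(\lambda)$ is $\mp i \sqrt{1 - |\xi'|^2}$, the sign depending on whether $\Re \lambda > 0$ or $\Re \lambda < 0$, these being the two boundary values of the analytic continuation across $i \R$. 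On each side of the imaginary axis, dividing $h T(\lambda)$ by $\mp i$ reduces it to an $h$-self-adjoint pseudodifferential operator on $\Gamma$ with semiclassical principal symbol
\[
p(x', \xi') = \sqrt{1 - |\xi'|^2} - \gamma(x'), \qquad (x', \xi') \in \mathscr{H}.
\]
The sub-level set $\{p < 0\} = \{(x', \xi') : |\xi'|^2 + \gamma^2(x') < 1\}$ is precisely the symplectic region whose volume yields the integrand in (1.6).

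Third, I would apply the generalisation of the trace formula (Proposition 2.2 of \cite{P1}) to rectangular contours $\partial\bigl([-C_2 h^2, C_2 h^2] \times [R_1, R_2]\bigr)$ that cross the imaginary axis, which expresses the number of poles of $(G - \lambda)^{-1}$ inside the rectangle as a contour integral of a logarithmic derivative of $T(\lambda)$. By the parametrix constructed above, this reduces to counting small eigenvalues of the boundary $h$-pseudodifferential operator with symbol $p$. A semiclassical Weyl law of Sj\"ostrand--Vodev type, as used in \cite{SjV} and \cite{P2}, then yields on each side of $i \R$
\[
\frac{\omega_{d-1}}{(2\pi)^{d-1}} \Bigl(\int_{\Gamma} (1 - \gamma^2(x))^{(d-1)/2}\, dS_x\Bigr) r^{d-1} + O_\gamma(r^{d-2}),
\]
and the two contributions, one from $\sigma_p(G)$ on the side $\Re \lambda < 0$ and one from $\mathrm{Res}(G)$ on the side $\Re \lambda > 0$, add up to produce the factor $2$ appearing in (1.6).

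The main technical obstacle will be the behaviour near the glancing set $\mathscr{G} = \{|\xi'| = 1\}$, where $\mathcal{N}_{-}(\lambda)$ loses its smooth pseudodifferential character and the elliptic/hyperbolic splitting breaks down; as in \cite{SjV} and \cite{P2}, this forces the use of operator-valued Airy-type symbols. The crucial geometric point is that the very narrow strip width $|\Re \lambda| \leq C_2 (1 + |\Im \lambda|)^{-2}$ defining $\Lambda$ is precisely what allows the glancing contributions, together with all parametrix remainders, to be absorbed into the $O(r^{d-2})$ error. Secondary checks are that $\sigma_p(G) \cap i \R = \emptyset$, which is known from \cite{LP1}, and that incoming resonances do not accumulate on $i \R$ inside $\Lambda$, so that no pole is counted twice by the rectangular contour, as well as that the parametrices on the two sides of $i \R$ glue with compatible sign conventions at the glancing boundary.
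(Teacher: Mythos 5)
Your overall scaffolding (boundary reduction to the exterior incoming Dirichlet-to-Neumann family, trace formula over contours crossing $\ii\R$, microlocal splitting with ellipticity in ${\mathscr E}$, Weyl count of a characteristic set contained in ${\mathscr H}$) matches the paper, but there is a genuine conceptual error at the heart of your argument: the claimed sign jump of the hyperbolic symbol across the imaginary axis and the resulting mechanism for the factor $2$. The operator used here is the \emph{single} meromorphic (in fact, for strictly convex $K$, analytic in the relevant region) continuation of the incoming Dirichlet-to-Neumann map; its principal symbol in ${\mathscr H}$ is one fixed branch $\sqrt{z-r_0}$ (equivalently $\sqrt{1-r_0}$ for $\tilde{\nc}(\th)$), the same on both sides of $\ii\R$ — there are no ``two boundary values'' and no $\mp\ii$ dichotomy according to the sign of $\Re\lambda$. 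What does jump across $\ii\R$ is the difference between the incoming and outgoing resolvents; if you used the two physical boundary values from the two half-planes you would be detecting outgoing resonances on one side, not the set ${\rm Res}\,(G)$ defined by the incoming continuation. Your proposed framework is therefore internally inconsistent with the holomorphy you need for the contour/trace argument across $\ii\R$.

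As a consequence, your accounting of the factor $2$ is wrong: you attribute one full Weyl term to eigenvalues ($\Re\lambda<0$) and one to incoming resonances ($\Re\lambda>0$) and add them. The paper proves no such separate asymptotics, and indeed cannot: for the ball $B_3$ with constant $0<\gamma<1$ the Appendix shows $\sigma_p(G)=\emptyset$, so a Weyl law for the eigenvalue side alone is false; only the \emph{combined} count of zeros of the single family $C(\th)$ near $\ii\R$ obeys the Weyl law. In the paper the factor $2$ comes from the symmetry $\lambda\mapsto\bar\lambda$ (reflection about the real axis, i.e.\ the two components $\Im\lambda\gtrless 0$ of $\Lambda$), after counting once in the upper half-plane via the bijection with the zero-crossings $h_k$ of the eigenvalue branches of the holomorphic, self-adjoint-for-real-$\th$ model operator $P(\th)$ (Sections 5--6: monotonicity of $\mu_k(h)$ from Proposition 5.1, invertibility off the exceptional intervals, and comparison of the trace formulas for $C(\th)$ and $P(\th)$). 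That construction — a globally defined self-adjoint modification of the hyperbolic parametrix whose negative eigenvalues can be counted by the standard semiclassical formula — is the technical core that your outline defers entirely to \cite{SjV}, \cite{P2}; deferring it is acceptable in an outline, but the sign-jump picture and the left/right origin of the factor $2$ must be corrected before the rest can be carried out.
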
 
The constant $C_{\gamma}$ depends on $\gamma$ and $C_{\gamma} \nearrow +\infty$ when either $c_0 \searrow 0$ or $c_1 \nearrow 1.$
The eigenvalues and incoming resonances are symmetric with respect to real axis and this explains the factor 2 in (\ref{eq:1.6}).
If the conjecture for the absence of eigenvalues of $G$ is true, the asymptotic (\ref{eq:1.6}) will concern only the incoming resonances. In this direction our result will be similar to that in \cite{SjV} dealing with Weyl asymptotic of Rayleigh resonances. In particular, for the ball $B_3$ and $\gamma$ constant (\ref{eq:1.6}) implies the existence of incoming resonances lying in a small neighborhood of the imaginary axis. The location of the outgoing resonances is different and it seems possible following the approach in Section 4, \cite{DZ},  to show that for non trapping obstacles the outgoing resonances  in $\C_{-}$ are in a region bounded  by logarithmic curves. 
The existence of incoming resonances $z_j$ with $\Re z_j \searrow 0$ is related to the loss of exponential decay of the solutions of (\ref{eq:1.5}). Ikawa \cite{I1} studied the exponential decay  for the problem (\ref{eq:1.5}) with boundary condition
$$\partial_{\nu}u + \gamma(x) \pa_t u + d(x) u= 0 \: {\rm on} \: \R_t^+ \times \Gamma,$$
assuming $\gamma(x) < 1$ and $d(x) \leq - d_0 < 0, \: \forall x \in \Gamma$ with large $d_0 > 0.$ In our case we have $d(x) \equiv 0$ and the existence of incoming resonances close to $\ii \R$  for $B_ 3$ should imply that such decay of solutions is not possible. We don't study this problem in this paper.

Our approach is based on the following ideas. The behaviour of the exterior Dirichlet-to-Neumann operator $N(\lambda), \: \lambda \in \Lambda$ for $\Delta - \lambda^2$ in the hyperbolic region ${\mathscr H}$ is crucial for our analysis (see section 2 for the definition of $N(\lambda)$). Setting $\lambda =  \frac{\ii \sqrt{z}}{h}, \: 0 < h \ll 1,$ we are going  to study the Dirichlet problem (\ref{eq:2.12}) and the corresponding semi-classical Dirichlet-to-Neumann operator $\nc(z, h)$ for $z = 1 + \ii \im z$ and  $- c h |\log h| \leq \im z \leq h^{\ep},\: c > 0$ (see Section 2). A semi-classical parametrix for $\nc(z, h)$  and $|\im z| \leq h^{2/3}$ has been constructed in \cite{Sj}. We need a more precise information for the parametrix in $\mathscr H$ and for this purpose we present in subsection 3.1 a construction similar to that in Section 4, \cite{V3}. The trace formula (\ref{eq:2.7}) can be transformed into (\ref{eq:2.9}) with integration with respect to complex semi-classical parameter $\th = \frac{\ii}{\lambda}, \: 0 < \Re \th \ll 1.$ For the analysis in Sections 5 and 6  it is necessary to have a parametrix $\tilde{T}_N(\th)$ for the operator $\tilde{\nc}(\th)$ related to the problem (\ref{eq:3.14}) in ${\mathscr H}$ which is holomorphic in the domain
 $$ L= \{\th \in \C: \: |\im \th| \leq C_0 |\th|^2, \: 0 < \re \th \leq h_0\}.$$
  To do this, we make another construction in subsection 3.2 following that in Appendix A.2 in \cite{StV}.
     
     The location of eigenvalues and incoming resonances in $\Lambda$ is studied in Section 4. The idea is to prove that the eigenfunction or resonances states are supported in the hyperbolic region ${\mathscr H}.$  An eigenfunction/incoming resonance state  $f$ of $G$ satisfies the equation 
     \begin{equation}\label{eq:1.7}
     (\nc(z, h) - \sqrt{z}  \gamma(x))f = 0.
     \end{equation}
      We use a microlocal partition of unity $f = Q_{\delta}^{-} f + Q_{\delta}^{0} f + Q_{\delta}^{+} f$ with $h$-pseudo-differential operators $Q_{\delta}^{+}, \: Q_{\delta}^{-},\:Q_{\delta}^{0}$ having symbols supported in the regions ${\mathscr E}, \: {\mathscr H}, \: {\mathscr G}$, respectively. Here $0 < \delta \ll c_0^2$ depends of $c_0.$ By using (\ref{eq:1.7}) and the parametrix for $\nc(z, h) Q_{\delta}^{+}$ and $\nc(z, h) Q_{\delta}^{(0)}$ constructed in \cite{V1}, {\cite{V3}, \cite{V4}, \cite{Sj}, we show in subsections 4.1 and 4.2 (see Propositions 4.1, 4.2) that for small $\delta$ and $h$ we have $Q_{\delta}^{+} f = Q_{\delta}^{0} f = 0.$  This makes possible to reduce (\ref{eq:1.7}) to  
      $$(\nc(z, h) - \sqrt{z}  \gamma(x))Q_{\delta}^{-} f = 0.$$
      The proof is technically complicated because we cannot separate the terms involving $Q_{\delta}^{-}f$ and $Q_{\delta}^{(0)} f.$ 
      The last equation implies  $\im ((\nc(1, h) - \gamma) Q_{\delta}^{-}f, Q_{\delta}^{-} f) = 0$ because $\nc(1, h)$ is self-adjoint. We exploit a Taylor expansion 
       with respect to $\im z$ and one applies the approximation of  $\nc(z, h)Q_{\delta}^{-} $ by the parametrix $T_N(z, h)Q_{\delta}^{-}$ constructed in subsection 3.1. Our argument is similar to those used in \cite{V1}, \cite{P1} for the eigenvalues free regions created by the characteristic set of $N(z, h) - \sqrt{z} \gamma(x)$ in  ${\mathscr E}.$
      
         To obtain a Weyl formula, we are inspired by the strategy in \cite{SjV} (see also \cite{P2}, \cite{P3}). The crucial point is to construct an operator $P(\th)$ which is holomorphic in  $ L$ and self-adjoint for $\Re \th = h,\: 0 < h \leq h_0.$ For this purpose in Section 5 we modify outside ${\mathscr H}$ the parametrix $-\tilde{T}_N(\th)$ constructed in subsection 3.2 and consider 
 \[P(\th) : = -\tilde{T}_N(\th) \tilde{A}_0(\th) + \gamma(x) +(Id-  \tilde{A}_0(\th))  {\rm Op}\:\Bigl(\sqrt{1 + \th^2 r_0}\Bigr).\]  
  The operator $\tilde{A}_0(\th)$   is a suitable holomorphic extension of 
  $$A(h) = {\rm Op}(1 - \beta(h^2 r_0(x', \xi'))$$
   for complex $\th.$      
   Here $r_0(x', \xi')$ is the principal symbol of the Laplace-Beltrami operator $- \Delta\vert_{\Gamma}$ induced on $\Gamma$ and the function $\beta(t)\in C^{\infty}(\R:[0, 1])$ defined in subsection 4.3 has the properties:  $\beta(t) = 0$ for $t \leq 1- \delta,\: \beta(t) = 1$ for $t \geq 1- \delta/2,\: \beta'(t) \geq 0,\: \forall t \in \R$, with $\delta = \frac{c_0^2}{2}.$           
     The self-adjoint operator $P(h),\: 0 < h \leq h_0,$ has classical principal symbol 
  $$p_1 = -\sqrt{1 - h^2 r_0}(1-  \beta(h^2 r_0) + \gamma(x) + \beta(h^2 r_0) \sqrt{1 + h^2 r_0}.$$     
  Our modification has the following proprieties: (a) the glancing and elliptic region are cut-off, (b) in $\mathscr H$ we preserve the contribution from the parametric and this is important for the leading term in (\ref{eq:1.6}), (c) we have 
  $$\liminf_{(x', \xi') \to \infty} {\rm dist}\:(p_1(x', \xi') , ]-\infty, 0]) > 1.$$    Moreover,  the set
        \begin{equation} \label{eq:1.8}
\mathcal E = \{(x', \xi'):\: p_1(x', \xi') \leq 0\}  \iff \{ (x', \xi'): \:  r_0(x', \xi') \leq 1 - \gamma^2(x')\} \subset {\mathscr H}
\end{equation}   
is  independent of the choice of $\delta$ and $\beta$ and the same is true for the characteristic set $\Sigma : = \{(x', \xi'):\: p_1(x', \xi') = 0\}$ (see (\ref{eq:5.7})). Consequently, the asymptotic of the counting function for the negative eigenvalues of $P(h)$ can be obtained from the results of Section 10 in \cite{DS}. 

  The second point of the strategy is to study the eigenvalues of $P(h).$ As a preparation in Proposition 5.1, we establish the positivity of the operator $h \frac{d P(h)}{d h},$ choosing $h = o((1 - c_1^2))$.  Next our argument is similar to that applied in \cite{SjV}, \cite{P2}, \cite{P3}.  We introduce the eigenvalues 
$$\mu_1(h) \leq \mu_2(h) \leq ...\leq \mu_k(h) \leq ...$$
of $P(h)$ for $0 < h \leq h_0.$ By using Proposition 5.1 we prove that for every eigenvalue $\mu_k(h), \: k \geq k_0,$ which takes negative values for some $0 < h'_k \leq h_0$ there exists unique point $h'_k < h_k \leq h_0$ such that $\mu_k(h_k) = 0.$ For these values of $h$ the operator $P(h)$ is not invertible and it is necessary to avoid them working in some intervals not containing $h_k$. In Section 6 a trace formula for $P(\th)$ is given in Proposition 6.2. The final step is to compare the trace formula for $P(\th)$ with that in (\ref{eq:2.9}). By using the special choice of $P(\th)$, we show that these trace formulas differ by negligible term and we obtain a bijection between $]0, h_0] \ni h_k$ and eigenvalues/incoming resonances $\lambda_j \in \Lambda.$  Thus the problem is reduced to count for $0 < r^{-1}  \leq h_0$ the number of the negative eigenvalues $\mu_k(r^{-1})$ of $P(r^{-1}).$
 As in \cite{SjV}, \cite{P2}, this number is given by well known formula (\ref{eq:6.12}). Taking into account (\ref{eq:1.8}),  and we obtain the asymptotic (\ref{eq:1.6}).


\subsection*{Acknowledgements.} We would like to thank Johannes Sj\"ostrand for the useful and stimulating discussions concerning the construction of the parametrix. We thank also Jean-Fran\c cois Bony and Plamen Stefanov for the interesting discussions concerning the absence of eigenvalues.

\section{Preliminaries}

In this section we collect some facts from \cite{P1}, \cite{P2} and we prove a trace formula involving the extended incoming resolvent $(G - \lambda)^{-1}: \; \hc_{comp} \rightarrow \dc_{loc}.$  The results in this section hold for non trapping obstacles. For $\lambda \in \C$ introduce the exterior Dirichlet-to-Neumann map
$$N(\lambda): H^s(\Gamma) \ni f \longrightarrow \pa_{\nu} u\vert_{\Gamma} \in H^{s-1}(\Gamma),$$
where $u = K(\lambda) f$ is the solution of the problem
\begin{equation} \label{eq:2.1}
\begin{cases} (-\Delta +\lambda^2) K(\lambda)f = 0 \: {\rm in}\: \Omega,\\
K(\lambda)f = f \:{\rm on}\: \Gamma,\\
K(\lambda) f : \lambda-{\rm incoming}.
 \end{cases}
\end{equation}

A function $u(x)$  is $\lambda$-{\it outgoing} ($\lambda$-{\it incoming}) if there exists $R > \rho_0$ and $g \in L^2_{comp}(\R^d)$ such that
$$u(x) = R_0^{\pm}(\lambda) g,\: |x| \geq R,$$
where $R_0^{\pm}(\lambda) =(-\Delta_0 +\lambda^2)^{-1}$ is the outgoing (+) (incoming (-)) resolvent of the free Laplacian $- \Delta_0$ in $\R^d$. The resolvents $R_0^{\pm}(\lambda)$  are analytic in $\C$ for $d$ odd and they have kernels
 \begin{equation*}   R_0^{\pm} (\lambda, x, y) = \frac{(-1)^{(d-1)/2}}{2(2 \pi)^{(d-1)/2}} \Bigl(\frac{1}{r}\pa_r\Bigr)^{(d- 3)/2} \Bigl(\frac{ e^{\mp \lambda r}}{r}\Bigr)\Big\vert_{r =  |x-y|}.
\end{equation*} 

\begin{rem}
 Our definition of outgoing/incoming solutions coincides with that in Chapter IV, \cite{LP}. Setting $\lambda = \ii \mu,$ we see that the incoming resolvent $R_0^{-}(\ii \mu)= (- \Delta_0 - \mu^2)^{-1} $ for $\Im \mu >0$  is bounded form $L^2(\R^d)$ to $L^2(\R^d)$, hence $R_0^{-}(\ii \mu)$ becomes the outgoing resolvent defined in Section $3.1$, \cite{DZ}. Throughout our exposition the incoming resolvents are outgoing ones in the sense of \cite{G}, \cite{StV}, \cite{DZ}.
 \end{rem}

It is clear that if $G(f_1, f_2) = \lambda  (f_1, f_2)$ with $ \re \lambda < 0$, then $f_1 \in H^2(\Omega)$ is $\lambda$-incoming solution of $(-\Delta + \lambda^2) f_1 = 0$.
In particular, the incoming eigenvectors defined in Section 5,  \cite{LP1} are the eigenfunctions of $G$. 

Let $R_D (\lambda) = (-\Delta_D +\lambda^2)^{-1}$ be the incoming resolvent of the Dirichlet Laplacian $\Delta_D$ in $\Omega$ with domain ${\bf D} = H^2(\Omega) \cap H^1_0(\Omega)$
 which is analytic for $\lambda \in \C_{-}$. Let
 $${\bf D}_{loc} = \{ u \in L^2_{loc}(\Omega):\: \chi(x) \in C_0^{\infty}(\R^d), \: \chi(x) \equiv 1\: {\text in\:a\:neighborhood\:of} \bar{K}\: \Rightarrow \chi u \in {\bf D} \}. $$
 For $d$ odd the incoming resolvent has meromorphic extension 
 $$R_D (\lambda):\: L^2_{comp}(\Omega) \rightarrow {\bf D}_{loc}$$
  from $\C_{-}$ to $\C$ . The solution of the problem
(\ref{eq:2.1}) with $f \in H^{3/2}(\Gamma)$ has the form
\begin{equation} \label{eq:2.2}
u = e(f) + R_D(\lambda)((\Delta -\lambda^2) (e(f)),
\end{equation} 
where $e(f): H^{3/2}(\Gamma) \ni f \to e(f) \in H^{2}_{comp}(\Omega)$ is an extension operator. Clearly, we may find $\pa_{\nu}u\vert_{\Gamma}$ by applying (\ref{eq:2.2}).

 For non trapping obstacles and $d$  odd $R_{D}(\lambda)$ is analytic in
$$ \Lambda_0 = \{\lambda \in \C:\:\re \lambda < C_0 \log |\im \lambda|, \: |\im \lambda| \geq C_1 > 1\},\: C_0 > 0$$
and from (\ref{eq:2.2}) we deduce that operator $N(\lambda)$ is analytic in the same domain. Moreover, for non trapping obstacles the Laplacian with Neumann boundary conditions has no resonances in $\Lambda_0$ with suitable positive constants $C_0, C_1$ and this implies the analyticity of $N(\lambda)^{-1}.$\\

Going back to the problem (\ref{eq:1.2}), we write for $\lambda \in \Lambda_0$ the boundary condition  as 
\begin{equation} \label{eq:2.3}
\cc(\lambda)v: = (N(\lambda) - \lambda \gamma) v  = N(\lambda) \Bigl( Id -  N(\lambda)^{-1} \lambda \gamma \Bigr) v  = 0,\:v=  f_1\vert_{\Gamma} \in H^{3/2}(\Gamma).
\end{equation} 
For $ \lambda \in \Lambda_0$ the operator $\cc(\lambda):\:H^{s}(\Gamma) \rightarrow H^{s-1}(\Gamma)$ has the same singularities as $N(\lambda)$.
On the other hand, $N(\lambda)^{-1}:\: H^{s}(\Gamma) \rightarrow H^{s+ 1}(\Gamma)$ is compact operator and $\cc(\lambda_0)$ is invertible for some $\lambda_0$. Applying analytic Fredholm theorem (see \cite{P1}, \cite{P2}),  the operator $\cc(\lambda)$ is a meromorphic operator valued function for $\lambda \in \Lambda_0.$ Here and below a meromorphic operator valued function $B(z)$ means that $B(z)$  have Laurent expansion with finite number negative powers of $z$ and coefficients having finite rank.
 
 For the resolvent $(\lambda- G)^{-1}: {\mathcal H} \rightarrow {\mathcal D},\: \lambda \notin \sigma_p(G),\: \re \lambda < 0$ we proved in \cite{P1} the following trace formula.
 
\begin{prop} Let $\omega \subset \{ \lambda \in \C: \: \re 
\lambda < 0\}$ be a closed positively oriented curve without self intersections. Assume that  $\cc(\lambda)^{-1} $ has no poles on $\omega$ . Then
\begin{equation} \label{eq:2.4}
 {\rm tr}_{{\mathcal H}} \: \frac{1}{2 \pi \ii} \int_{\omega} (\lambda - G)^{-1} d\lambda = {\rm tr}_{H^{1/2}(\Gamma)} \:\frac{1}{2 \pi \ii} \int_{\omega} \cc(\lambda)^{-1}  \frac{\pa \cc}{\pa \lambda}(\lambda) d \lambda.
\end{equation}
\end{prop}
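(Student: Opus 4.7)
The plan is to identify both sides of (\ref{eq:2.4}) with the count of eigenvalues of $G$ inside $\omega$, weighted by algebraic multiplicity. On the left, residue calculus gives
\[\mathrm{tr}_{\hc}\frac{1}{2\pi\ii}\int_{\omega}(\lambda-G)^{-1}d\lambda=\sum_{\lambda_j\in\omega\cap\sigma_p(G)}\mathrm{mult}(\lambda_j).\]
On the right, the Gohberg--Sigal logarithmic derivative formula for a meromorphic Fredholm operator valued function counts the zeros of $\cc(\lambda)$ minus the poles of $\cc(\lambda)^{-1}$ inside $\omega$, each with its algebraic multiplicity. Since $R_D(\lambda)$ is analytic in $\{\re\lambda<0\}$ as a bounded operator on $L^2$, the representation (\ref{eq:2.2}) shows that $N(\lambda)$, and hence $\cc(\lambda)=N(\lambda)-\lambda\gamma$, is holomorphic there, so only the zeros contribute; by (\ref{eq:2.3}) these zeros are precisely the eigenvalues of $G$.

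The substance of the proof is matching the two notions of multiplicity. To this end I would factor the resolvent explicitly. Writing $(\lambda-G)u=f$ yields $u_2=\lambda u_1-f_1$ together with $(-\Delta+\lambda^2)u_1=f_2+\lambda f_1$ in $\Omega$, the boundary condition $(\pa_\nu-\lambda\gamma)u_1=-\gamma f_1$ on $\Gamma$, and the incoming radiation condition at infinity. Decomposing $u_1=R_D(\lambda)(f_2+\lambda f_1)+K(\lambda)v$ with $K(\lambda)$ the Poisson operator of (\ref{eq:2.1}), the boundary condition becomes
\[\cc(\lambda)\,v=-\gamma f_1|_\Gamma-\pa_\nu R_D(\lambda)(f_2+\lambda f_1)|_\Gamma.\]
Solving for $v$ and substituting back gives a factorisation
\[(\lambda-G)^{-1}=A_0(\lambda)+B_0(\lambda)\,\cc(\lambda)^{-1}\,T_0(\lambda),\]
in which $A_0$, $B_0$, $T_0$ are built from $R_D(\lambda)$, $K(\lambda)$ and trace operators and are therefore holomorphic on $\{\re\lambda<0\}$.

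Integrating around $\omega$ kills $A_0(\lambda)$, and cyclicity of the trace reduces the remainder to
\[\mathrm{tr}_{H^{1/2}(\Gamma)}\frac{1}{2\pi\ii}\int_{\omega}\cc(\lambda)^{-1}\bigl(T_0(\lambda)B_0(\lambda)\bigr)d\lambda.\]
The main obstacle is to show that $T_0(\lambda)B_0(\lambda)$ equals $\pa_\lambda\cc(\lambda)$ modulo a family holomorphic inside $\omega$, so that the boundary integral becomes the logarithmic derivative of $\cc$. I would carry this out by differentiating the Green identity characterising $N(\lambda)$ in $\lambda$ and invoking the resolvent identity for $R_D(\lambda)$; since $\pa_\lambda\cc(\lambda)=\pa_\lambda N(\lambda)-\gamma$, the discrepancy between $T_0B_0$ and $\pa_\lambda\cc$ is a holomorphic operator valued function that integrates to zero on the closed curve. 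With that identification, both sides of (\ref{eq:2.4}) equal $\sum_{\lambda_j\in\omega\cap\sigma_p(G)}\mathrm{mult}(\lambda_j)$, which proves the formula.
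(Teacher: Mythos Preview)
Your approach is essentially the same as the paper's. The paper (which proves this as a special case of Proposition~2.2) uses exactly the factorisation you describe: writing $u_1=-R_D(\lambda)(f_2+\lambda f_1)+K(\lambda)\cc(\lambda)^{-1}[\cdots]$, integrating over $\omega$ to kill the holomorphic $R_D$-term, and applying cyclicity of the trace to move $K(\lambda)$ to the right. The step you leave slightly vague---identifying $T_0(\lambda)B_0(\lambda)$ with $\partial_\lambda\cc(\lambda)$ modulo holomorphic---is made precise in the paper via the observation that $(-\Delta+\lambda^2)\partial_\lambda K(\lambda)=-2\lambda K(\lambda)$ with $\partial_\lambda K(\lambda)|_\Gamma=0$, which gives $\partial_\nu\bigl(R_D(\lambda)\,2\lambda K(\lambda)\bigr)\big|_\Gamma=-\partial_\lambda N(\lambda)$ directly; this is exactly the ``differentiating the Green identity'' step you anticipated.
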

The left hand term in (\ref{eq:2.4}) is equal to the number of the eigenvalues of $G$ in the domain ${\mathcal U}$ bounded by $\omega$ counted with their multiplicities.
It is clear that $\lambda \in \C_{-}$ is an eigenvalues of $G$ if there exists a function $0 \neq f \in H^{3/2}(\Gamma)$ such that
\begin{equation} \label{eq:2.5}
\cc(\lambda) = N(\lambda) f - \lambda \gamma(x) f = 0.
\end{equation} 
Since $G$ has continuous spectrum on $\ii \R$ (see Section 8, \cite {LP1}), we cannot extend the trace formula (\ref{eq:2.4}) with the resolvent $(\lambda - G)^{-1}$ for curves intersecting the imaginary axis. To do this, we extend below the incoming resolvent.

  For $d$ odd,  $R_{D}(\lambda):\:L^2_{comp}(\Omega) \rightarrow {\bf D}_{loc},\: N(\lambda)^{-1}$ and $\cc(\lambda)$ are meromorphic operator valued functions in $\C$.  By using (\ref{eq:2.3}) and meromorphic Fredholm theorem, we conclude  that $\cc(\lambda)^{-1} $ will be meromorphic in $\C$. If $R_D(\lambda)$ and $N(\lambda)^{-1}$ are analytic in some domain ${\mathcal V} \subset \C$, we can apply the analytic Fredholm theorem to obtain that $\cc(\lambda)^{-1}$ is meromorphic for $\lambda \in {\mathcal V}$. In particular, for non trapping obstacles there exists $ a > 0$ such that $R_D(\lambda):\: L^2_{comp}(\Omega) \rightarrow {\bf D}_{loc}$ and $N(\lambda)^{-1}$ are analytic for $\re \lambda < a$ and for strictly convex obstacles $K$ this statement holds. It is easy to see that we can extend the incoming   resolvent $(G - \lambda)^{-1},\: \lambda \notin \sigma_p(G), \: \re \lambda < 0$ as meromorphic function $(G - \lambda)^{-1}:\: {\mathcal H}_{comp} \rightarrow {\mathcal D}_{loc}$ for $\lambda \in \C,$  
  where
  $${\mathcal D}_{loc} = \{ u \in {\mathcal H}_{loc},\: \chi(x) \in C_0^{\infty}(\R^d), \: \chi(x) \equiv 1 \: {\text in\: a\: neighborhood\:of} \bar{K}\: \Rightarrow \chi u \in {\mathcal D} \}.$$

Let $ \begin{pmatrix} u \\ w \end{pmatrix} = (G- \lambda)^{-1} \begin{pmatrix} f \\ g \end{pmatrix} $  with $(f, g) \in {\mathcal H}_{comp}$.
 For $\lambda \in \C_{-}$ we have $w = \lambda u + f,$
$$u = - R_D(\lambda)  (g + \lambda  f) + K(\lambda) q$$
with $q = u \vert_{\Gamma} $ and $K(\lambda)q$ determined by (\ref{eq:2.1}).
From the boundary condition
$$\pa_{\nu} \Bigl[ - R_D(\lambda) ( g + \lambda f) + K(\lambda) q\Bigr]\Big \vert_{\Gamma} -\lambda \gamma q- \gamma f\big\vert_{\Gamma}  = 0,$$
one gets 
$$q  = \cc(\lambda)^{-1} \Bigl[ \pa_{\nu}\Bigl (R_D^{+}(\lambda)  ( g + \lambda f)\Bigr)\Big\vert_{\Gamma}  + \gamma f\vert_{\Gamma} \Bigr],$$
provided that $\cc(\lambda)^{-1}$ exists. 
Thus we obtain 
\begin{equation} \label{eq:2.6}
 u = -  R_D(\lambda)   (g + \lambda f) +  K(\lambda) \cc(\lambda)^{-1} \Bigl[ \pa_{\nu}\Bigl (R_D(\lambda) ( g + \lambda f)\Bigr)\Big\vert_{\Gamma}  + \gamma f\vert_{\Gamma} \Bigr].
\end{equation}
The operator  $\cc(\lambda)^{-1} $ has meromorphic extension and 
$$R_D(\lambda) :\:  L^2_{comp}(\Omega)  \rightarrow {\bf D}_{loc},\: K(\lambda) :\: H^{3/2}(\Gamma) \rightarrow {\bf D}_{loc}$$
 are meromorphic operator valued functions.
 Consequently, $( u,  w) \in {\mathcal D}_{loc}$ yields a meromorphic extension of the incoming resolvent
$ (G -  \lambda)^{-1} \begin{pmatrix} f \\ g \end{pmatrix}.$ We call {\it incoming resonances} the poles $z,\: \Re z \geq 0$ of this incoming resolvent  and denote them by ${\rm Res}\:(G).$ Similarly,  we can define the {\it outgoing resonances} as the poles  $w, \: \Re w \leq 0,$ of the meromorphic continuation of the outgoing resolvent $(G - \lambda)^{-1}: {\mathcal H}_{comp} \rightarrow {\mathcal D}_{loc}$ which is analytic for $\Re w > 0$ and meromorphic for $\Re w \leq 0.$  

 Now assume that $a > 0$ is such that $R_D(\lambda): L^2_{comp}(\Omega) \rightarrow {\bf D}_{loc}$ and $N(\lambda)^{-1}$ are analytic for $\re \lambda < a.$  This implies that $K(\lambda)$ is also analytic for $\re \lambda < a.$ Let 
$\zeta \subset \{z\in \C: \: \re z < a\}$ be closed positively oriented curve without self intersections such that $\cc(\lambda)^{-1}$ has no poles on $\zeta.$  Let $(f, g) \in {\mathcal H}_{comp}$ and let $\varphi(x) \in C_0^{\infty}(\R^d)$ is chosen so that $\varphi \equiv 1$ on $\supp f \cup \supp g.$ From (\ref{eq:2.6}) we give
 $$\varphi u = -  \varphi R_D(\lambda)   (g + \lambda f) + \varphi K(\lambda) \cc(\lambda)^{-1} \Bigl[ \pa_{\nu}\Bigl (R_D(\lambda) ( g + \lambda f)\Bigr)\Big\vert_{\Gamma}  + \gamma f\vert_{\Gamma} \Bigr],$$   
 $$\varphi w = \lambda \varphi u + f.$$
 Since $\varphi R_D(\lambda)(g + \lambda f) $ is analytic for $\Re \lambda < a$,  the integral over $\zeta$ of this term vanishes and 
 $$\begin{pmatrix}  f \\ g \end{pmatrix}  \rightarrow  \int_{\zeta} \varphi(G -\lambda)^{-1} \begin{pmatrix}  f \\ g \end{pmatrix} d\lambda= \int_{\zeta} \begin{pmatrix} \varphi u \\ \varphi w \end{pmatrix} d\lambda $$
 $$= \int_{\zeta} \varphi\begin{pmatrix}A_{11} & A_{12}\\A_{21} & A_{22} \end{pmatrix} \begin{pmatrix}  f \\ g \end{pmatrix} d\lambda.$$
 By the cyclicity  of the trace the factor $\varphi$ can be transferred to the right and the trace of above operator  becomes
 $${\rm tr} \int_{\zeta} (A_{11} + A_{22}) d\lambda = {\rm tr}  \int_{\zeta}  K(\lambda) \cc(\lambda)^{-1} \Bigl[ \pa_{\nu}R_D(\lambda)2 \lambda + \gamma\Bigr]\Big\vert_{\Gamma}   d\lambda $$
 $$= {\rm tr}  \int_{\zeta}  \cc(\lambda)^{-1} \Bigl( \pa_{\nu}\Bigl(R_D(\lambda) 2 \lambda K(\lambda)\Bigr)\big\vert_{\Gamma} + \gamma\Bigr) d\lambda. $$ 
 In the last equality we used the fact that $K(\lambda)$ is analytic and following Lemma 2.2 in \cite{SjV} we can transfer $K(\lambda)$ to the right.  Since 
 $$(-\Delta + \lambda^2) \frac{\pa K(\lambda)}{\pa \lambda} = -2 \lambda K(\lambda),\: \frac{\pa K(\lambda)}{\pa \lambda}\Big\vert_{\Gamma} = 0,$$
  one deduces
 $$\pa_{\nu}\Bigl(R_D(\lambda) 2 \lambda K(\lambda)\Bigr)\big\vert_{\Gamma}  = -\frac{\pa N(\lambda)}{\pa \lambda}.$$
 Thus  we obtain
 \begin{prop}Let $ a > 0$ and let $\zeta \subset \{z\in \C: \: \re z < a\}$ be closed positively oriented curve without self intersections such that the  incoming resolvent $R_D(\lambda): \: L^2_{comp}(\Omega) \rightarrow {\bf D}_{loc}$  and $N(\lambda)^{-1}$ are analytic for $\re \lambda < a$. Assume that there are no poles of $\cc(\lambda)^{-1} $ on $\zeta.$  Then  the  extended incoming resolvent $(\lambda - G)^{-1}:\: {\mathcal H}_{comp} \rightarrow {\mathcal D}_{loc}$ satisfies   
 
  \begin{equation} \label{eq:2.7}
 {\rm tr} \frac{1}{2 \pi \ii} \int_{\zeta}  (\lambda - G)^{-1}  d\lambda = {\rm tr}_{H^{1/2}(\Gamma)}\frac{1}{2 \pi \ii} \int_{\zeta} \cc(\lambda)^{-1} \frac{d \cc(\lambda)}{d\lambda} d \lambda .
\end{equation}
\end{prop}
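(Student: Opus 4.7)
The starting point is the explicit representation (\ref{eq:2.6}) for the extended incoming resolvent. Fix $(f,g)\in \hc_{comp}$ and $\varphi\in C_0^{\infty}(\R^d)$ with $\varphi\equiv 1$ on a neighbourhood of $\supp f\cup\supp g$. Writing $\binom{u}{w}=(G-\lambda)^{-1}\binom{f}{g}$, formula (\ref{eq:2.6}) together with $w=\lambda u+f$ gives a decomposition of $\varphi u$ (resp.\ $\varphi w$) into a term of the form $\varphi R_D(\lambda)(g+\lambda f)$ plus a term of the form $\varphi K(\lambda)\cc(\lambda)^{-1}[\pa_{\nu}R_D(\lambda)(g+\lambda f)\vert_{\Gamma}+\gamma f\vert_{\Gamma}]$. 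Under the standing assumption, $R_D(\lambda)$, $N(\lambda)^{-1}$ and hence $K(\lambda)$ are analytic in $\{\re\lambda<a\}$, so the first term is a holomorphic operator-valued function on and inside $\zeta$; its contour integral vanishes, and likewise the $\lambda f$ contribution to $w$ and the constant $f$ contribution drop out.

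Reading off the matrix entries $A_{ij}$ of $(G-\lambda)^{-1}$ from the remaining pieces, I would compute the trace as $\mathrm{tr}\int_{\zeta}(A_{11}+A_{22})\,d\lambda$. Using cyclicity of the trace I would first push the cutoff $\varphi$ to the right (it is absorbed because the other factors already map into compactly supported data on $\Gamma$), and then, invoking the analyticity of $K(\lambda)$ in $\{\re\lambda<a\}$ as in Lemma 2.2 of \cite{SjV}, I would move $K(\lambda)$ to the right of $\cc(\lambda)^{-1}$. This produces under the trace an integrand of the form
\begin{equation*}
\cc(\lambda)^{-1}\Bigl(\pa_{\nu}\bigl(R_D(\lambda)\,2\lambda\, K(\lambda)\bigr)\big\vert_{\Gamma}+\gamma\Bigr).
\end{equation*}

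The key algebraic step is to rewrite the normal-derivative factor as $-\pa N(\lambda)/\pa\lambda$. Differentiating the defining relations of $K(\lambda)$ in $\lambda$ yields
\begin{equation*}
(-\Delta+\lambda^{2})\frac{\pa K(\lambda)}{\pa\lambda}=-2\lambda K(\lambda),\qquad \frac{\pa K(\lambda)}{\pa\lambda}\bigg\vert_{\Gamma}=0,
\end{equation*}
and since the left-hand side is $\lambda$-incoming one deduces $\pa K(\lambda)/\pa\lambda=-R_D(\lambda)\cdot 2\lambda K(\lambda)$; taking the normal trace and using the definition of $N(\lambda)$ as $f\mapsto \pa_{\nu}K(\lambda)f\vert_{\Gamma}$ gives the identity $\pa_{\nu}(R_D(\lambda)2\lambda K(\lambda))\vert_{\Gamma}=-\pa N(\lambda)/\pa\lambda$. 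Combined with the $\gamma$ term, the bracket equals $-\pa N/\pa\lambda+\gamma=-d\cc(\lambda)/d\lambda$, and a sign check (reversing orientation between $\int(\lambda-G)^{-1}$ and $\int(G-\lambda)^{-1}$) yields (\ref{eq:2.7}).

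The main obstacle I expect is the rigorous justification of the trace manipulations: one must verify that each piece entering $A_{11}+A_{22}$ is trace class on the correct spaces, that cyclicity applies when the factors act between different Sobolev scales (from $H^{s}(\Gamma)$ to $\mathcal{D}_{loc}$ and back), and that the contour integral of the holomorphic pieces truly vanishes despite $\lambda$-growth of $K(\lambda)$ and $R_D(\lambda)$ in the compact window $\re\lambda<a$. These are standard but delicate, and they are the reason for invoking Lemma 2.2 of \cite{SjV}; once they are in place the calculation above delivers (\ref{eq:2.7}) directly.
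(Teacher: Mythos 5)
Your argument reproduces the paper's own proof: you start from the representation (\ref{eq:2.6}), discard the holomorphic term $\varphi R_D(\lambda)(g+\lambda f)$ over $\zeta$, take the trace of $A_{11}+A_{22}$, move $\varphi$ and then $K(\lambda)$ by cyclicity (Lemma 2.2 of \cite{SjV}), and use $(-\Delta+\lambda^2)\pa_\lambda K(\lambda)=-2\lambda K(\lambda)$ with vanishing boundary data to identify $\pa_{\nu}\bigl(R_D(\lambda)2\lambda K(\lambda)\bigr)\vert_{\Gamma}=-\pa N(\lambda)/\pa\lambda$, exactly as in the text. The final sign bookkeeping (passing from $(G-\lambda)^{-1}$ to $(\lambda-G)^{-1}$) and the reliance on the analyticity hypotheses are also the same, so the proposal is correct and essentially identical to the paper's proof.
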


It is clear that if the resolvent $(\lambda - G)^{-1}: \: {\mathcal H}_{comp} \rightarrow {\mathcal D}_{loc}$ has a pole at $\lambda_0$, then $\cc(\lambda)^{-1}$ must have a pole at $\lambda_0.$ From (\ref{eq:2.3}) this means that $(Id - N(\lambda)^{-1} \lambda \gamma)^{-1}$ has a pole at $\lambda_0$. The fact that $N(\lambda)^{-1}$ is compact implies that for some $f \in H^{3/2}(\Gamma)$ we have 
$$(Id - N(\lambda_0)^{-1} \lambda_0 \gamma) f = 0,$$
hence
$\cc(\lambda_0) f = 0$. 
 Choosing an incoming solution of (\ref{eq:2.1}) with Dirichlet data $f$, from the last condition we obtain the existence of a solution $v \in {\mathcal D}_{loc}$ of the problem
\begin{equation} \label{eq:2.8}
\begin{cases} (-\Delta +\lambda_0^2) v = 0 \: {\rm in}\: \Omega,\\
\pa_{\nu} v - \lambda_0 \gamma(x) v= 0 \:{\rm on}\: \Gamma,\\
v : \lambda_0-{\rm incoming}.
 \end{cases}
\end{equation}   
If $ v \in \mathcal D,\: \lambda_0$ is an eigenvalues of $G$. If $v \notin {\mathcal D},$ we obtain an incoming  resonance $\lambda_0$ and a resonance state $v \in \dc_{loc}$ for which $(G - \lambda_0) v = 0$ in the sense of distributions.
In conclusion, the left hand side of (\ref{eq:2.7}) is equal to the sum of the multiplicities of the eigenvalues and the incoming resonances of $G$ included in the domain $\mathcal U$ bounded by $\zeta.$

It is easy to examine the case $\lambda \in \ii \R.$ First, let  $\lambda_0 \in \ii \R, \lambda_0 \neq 0$ and let $v$ be an incoming solution of (\ref{eq:2.8}).
 Write $v = \psi v + (1- \psi) v$ with a function $\psi \in C_0^{\infty}(\R^d)$, equal to 1 in a neighborhood of $\bar{K}$. Then $(- \Delta + \lambda_0^2) ((1- \psi)v) = F \in L^2(\R^d)$ with $F =  0$ for $|x| > a_0$  and some $a_0 > 0.$ Since $(1 - \psi)v$ is incoming, by Corollary 4.3 in \cite{LP} (see also Theorem 4.17 in \cite{DZ}, where outgoing solutions are incoming ones in our sense), we deduce
$(1- \psi) v = 0$ for $|x| > a_0$ and this yields $v \in {\mathcal D}.$ The operator $G$ has no eigenvalues on $\ii \R$, hence  $v = 0.$ We will use this result in the proof of Proposition A.4 in Appendix. Second, let $\lambda_0 = 0$. Then $\Delta v = 0$ and $\pa_{\nu} v \vert_{\Gamma} = 0$. Since $v$ is incoming, we have $v = \oc(r^{2- d}), \: \pa_r v = \oc(r^{1-d})$ as $|x| = r \to \infty.$ Applying the proof of Theorem 4.19 in \cite{DZ}, we conclude that $v = 0.$

  For the analysis in Section 5 it is convenient to replace $\cc(\lambda)$ by the operator $- \cc(\lambda) = -N(\lambda) + \lambda \gamma$ and write (\ref{eq:2.7}) with $-\cc(\lambda).$ Next setting $\tilde{\cc}(\lambda) = - \frac{N(\lambda)}{\lambda} + \gamma$, for contour $\zeta$ and domain ${\mathcal U}$ not including 0, we obtain
 \begin{equation*} 
 {\rm tr}\frac{1}{2 \pi \ii} \int_{\zeta} \cc(\lambda)^{-1} \frac{d \cc(\lambda)}{d\lambda} d \lambda =  {\rm tr}\frac{1}{2 \pi \ii} \int_{\zeta}\tilde{\cc}(\lambda)^{-1} \frac{d \tilde{\cc}(\lambda)}{d\lambda} d \lambda .
 \end{equation*}

Now we pass to semi-classical parametrisation $\th = \frac{\ii}{\lambda}, \:0 <  \Re \th \ll 1 $ and  introduce the operator $C(\th) : = \ii\th \nc(\ii\th^{-1} ) + \gamma= - \tilde{\nc}(\th) + \gamma$. We have $\Im \lambda > 0 $ and  consider a contour $\zeta \subset \{z\in \C:\: \Im z > 0\}.$ Applying the above equality, the trace formula (\ref{eq:2.7}) becomes
\begin{equation} \label{eq:2.9}
 {\rm tr}\: \frac{1}{2 \pi \ii} \int_{\zeta} (\lambda - G)^{-1} d\lambda = {\rm tr} \frac{1}{2 \pi \ii} \int_{\tilde{\zeta}} C(\th)^{-1}  \dot{C}(\th) d \th,
\end{equation} 
where $\dot{C}$ denote the derivative with respect to $\th$ and $\tilde{\zeta}$ is the curve $\tilde{\zeta} = \{z \in \C:  z = \frac{\ii}{w}, \: w  \in \zeta\}.$ The eigenvalues and incoming resonances are symmetric with respect to real axis, since if $v$ is a solution of (\ref{eq:2.8}) with $\Im \lambda_0 > 0$, then $\bar{v}_0$ is a solution with $\bar{\lambda}_0.$ On the other hand, according to Theorem 1.1 in \cite{P1},in the case $0 < \gamma(x) < 1$ for fixed $d_0 > 0$ there are only finite number eigenvalues $\lambda$ with $|\Im \lambda| \leq d_0.$ Hence it is sufficient to study the eigenvalues in the half plane $\{ \Im \lambda > 0\}.$  
Clearly, $\Re \th = \frac{\im \lambda}{|\lambda|^2},\: \im \th = \frac{ \re \lambda}{|\lambda|^2}.$ For $\lambda \in \Lambda$ we obtain $|\im \th| \leq C_1 |\th|^4$ and we will work with $\th \in L$, where
\begin{equation} \label{eq:2.10}
 L := \{\th \in \C: \: |\im \th | \leq C_0 |\th|^2, \: 0 < |\th | \leq h_0\}.
\end{equation}
The operator $\tilde{\nc}(\th)$ is related to the problem
\begin{equation} \label{eq:2.11}
\begin{cases} (\th^2\Delta +1) u = 0 \:{\rm in}\: \Omega,\\
u = f \: {\rm on}\: \Gamma.\\
u -{\rm incoming}. \end{cases} 
\end{equation}

For the analysis of the location of eigenvalues and incoming resonances in Section 4 it is more convenient to work with another parametrisation $\lambda = \frac{\ii \sqrt{z}}{h}$ with
$z = 1 + \ii \Im z,\: |\im z| \leq 1,\:0 < h \leq h_0$ and $\re \sqrt{z} > 0.$ Thus we have again $\im \lambda > 0.$ The problem (\ref{eq:2.1}) becomes
\begin{equation} \label{eq:2.12}
\begin{cases} (-h^2\Delta - z) u = 0 \:{\rm in}\: \Omega,\\
u = f \: {\rm on}\: \Gamma,\\
u -{\rm incoming}. \end{cases} 
\end{equation}
 and we introduce the operator $\nc(z, h)f = -\ii h \pa_{\nu} u\vert_{\Gamma}.$ We have 
 $$\cc(f) = \frac{\ii}{h} \Bigl(\nc(z, h) f - \sqrt{z} f\Bigr)$$
and the equation $\cc(f) = 0$ yields
\begin{equation}\label{eq:2.13}
(\nc(z, h) - \sqrt{z}) f = 0.
\end{equation}

 Now we recall some definition concerning semi-classical wave fronts sets.  Given a manifold $X$ with dimension $d-1,$ consider $\widetilde{T^*(X)} = T^*(X) \cup S^*(X),$ where $S^*(X) \simeq \{(x, \infty \xi): (x, \xi) \in S^*(X)\} .$ The point in $T^*(X)$ will be called finite and the points in $S^*(X)$ will be called infinite (see \cite{A}, \cite{G}, \cite{SjV}). We are interested of semi-classical distributions $u(x, h) \in {\mathcal D}'(X),\: 0 < h \leq h_0$ for which
    $$\forall \chi \in C_0^{\infty}(X), \: |\widehat{(\chi u)} (\xi)| \leq C_N h^{-N} (1 +  |\xi |)^{N}\: {\rm for\: some}\: N,$$  
  $\hat{u}$ being the semi-classical Fourier transform
  $$\hat{u}(\xi, h) = \int e^{-\frac{\ii \la x, \xi\ra}{h} }u(x, h) dx.$$
   Let $\rho = (x_0, \xi_0) \in T^* (X)$. Then $ \rho \notin \widetilde{WF}(u)$ if there exists $\psi \in C_0^{\infty}(X)$ and $\zeta(\xi) \in C_0^{\infty}(\R^{d-1})$  with $\psi(x_0) = 1,\: \zeta(\xi_0) = 1 $  such that $|\zeta(\xi)\widehat{\psi u}(\xi)| \leq C_N h^N, \: \forall N.$ Similarly, an infinite point $\rho = (x_0, \infty\xi_0) \in S^*(X)$ is not in $\widetilde{WF}(u)$ if there exists $\psi(x) \in C_0^{\infty}(X)$ with $\psi(x_0) = 1$ such that 
  $$|\widehat{\psi u}(\xi)| \leq C_N h^N( 1+ |\xi|)^{-N},\: \forall N$$
  for all $\xi, |\xi | > C$  in some conic neighborhood of $\xi_0.$ Next as in \cite{SjV}, we introduce the the space of symbols $a(x, \xi; h) \in S^{m, k}(\Gamma \times \R^{d-1}\times (0, h_0])$ such that
  $$|\pa_x^{\alpha} \pa_{\xi}^{\beta} a(x, \xi; h) | \leq C_{\alpha, \beta}h^{-k}\la \xi\ra^{m- |\beta|},\: \forall (x,\xi) \in \Gamma \times \R^{d-1}, \: \forall \alpha, \forall \beta. $$  
  Let $S^m_{\rho, \delta}(\Gamma \times \R^{d-1})$ be the class of symbols $a(x, \xi) \in C^{\infty}(\Gamma \times \R^{d-1})$ such that 
  $$|\pa_x^{\alpha} \pa_{\xi}^{\beta} a(x, \xi) | \leq C_{\alpha, \beta}\la \xi \ra^{m-\rho |\beta| + \delta |\alpha|},\:\forall (x,\xi) \in \Gamma \times \R^{d-1},\: \forall \alpha, \forall\beta $$    
  and let  $S^{m, k}_{cl} \subset   S^{m, k}$ be the class of symbols $a(x, \xi; h)$ having asymptotic expansion
  $$a(x, \xi; h) \sim \sum_{j = 0}^{\infty} h^{j-k} a_j(x, \xi)$$
  with $a_j \in S_{1, 0}^{m- j}(\Gamma \times \R^{d-1})$. Denote by $\Op_h(a)$ the $h$-pseudo-differential operator
  $$(\Op_h(a) f)(x) =
 (2 \pi h)^{-d + 1}\int_{T^*X} e^{\ii \langle x - y, \xi \rangle /h} a(x, \xi; h) f(y) dy d \xi.$$ 
 Set $S^{-\infty, \-\infty} = \bigcap _{m, k} S^{m, k}$ and introduce the spaces of  $h$-pseudo-differential operators $L^{m, k}, \: L^{m, k}_{cl}$ with symbols in $S^{m, k},\: S^{m, k}_{cl},$ respectively. 
 
 Passing to semi-classical wave fronts of operators $A \in L^{m, k}$, consider the compactification $\widetilde{T^*(\Gamma)} = T^*(\Gamma) \cup S^*(\Gamma).$ We say that $\rho = (x_0, \xi_0)\notin \widetilde{WF}(A)$ if the symbol of $A$ in a neighborhood of $\rho$  is in the class $S^{-\infty, -\infty}.$  For $\rho = (x_0, \infty \xi_0) \in \widetilde{T^*(\Gamma)} \setminus T^*(\Gamma),$ we have $\rho  \notin \widetilde{WF}(A)$ if the symbol of $A$ is in the class $S^{-\infty, -\infty}$ in the set $\{(x, \xi):\: x \in U_0,\: \frac{\xi}{|\xi|} \in V_0, \: |\xi | \geq C\}$ with $U_0, V_0$ being neighborhoods of $x_0, \xi_0$, respectively. 
 
\section{Parametrix in the hyperbolic region} 
\subsection{Parametrix for $\nc(z, h)$}

\renewcommand{\Re}{\mathop{\rm Re}\nolimits}
\renewcommand{\Im}{\mathop{\rm Im}\nolimits}
\numberwithin{equation}{section}

\def\bbbone{{\mathchoice {1\mskip-4mu \text{l}} {1\mskip-4mu \text{l}}
{ 1\mskip-4.5mu \text{l}} { 1\mskip-5mu \text{l}}}}
\def\phi{\varphi}
\def\epsilon{\varepsilon}
\def\kappa{\varkappa}
\def\eT{e^{-\lambda T}}
\def\ii{{\bf i}}
\def\hh{\hat{x}}
\def\hhx{\hat{\xi}}
\def\12{\frac{1}{2}}
\def\Rc{{\mathcal R}}
\def\tE{\tilde{E}}
\def\ep{\epsilon}
\def\la{\langle}
\def\ra{\rangle}
\def\co{{\mathcal O}}
\def\pa{\partial}
\def\h{^}
\def\pa{\partial}
\def\qp{Q^{+}_{\delta}}
\def\qm{Q^{-}_{\delta}}
\def\qn{Q^{0}_{\delta}}
\def\th{\tilde{h}}

Throughout this and following sections we assume that the obstacle $K$  is strictly convex. In this subsection we use the parametrisation  $\lambda  = \frac{ \ii \sqrt{z}}{h},\: 0 < h \ll 1,\:  z = 1 + \ii  \theta$, while in the subsection 3.2 we will work with $\lambda = \frac{\ii}{\th},\: \th \in L.$  Moreover, we assume that $\theta$ satisfies the inequalities 
$$ -c h |\log h| \leq \theta \leq h^{\ep},\: c > 0,\:0 < \ep \ll 1.$$
 Introduce local geodesic coordinates $(x_1, x'),\: x' \in {\mathcal V} \subset \R^{d-1}$, where  the boundary $\Gamma$ locally is given by $x_1 = 0$ and $x_1 > 0$  in the domain $\Omega.$ In these coordinates one has
 $$- h^2\Delta = h^2D_{x_1}^2 + Q(x, hD_{x'}) + h a_0(x) D_{x_1} ,\: D_{x_j} = - \ii \pa_{x_j}, \: j = 1,...,d$$
 with second order operator $Q$ with symbol $Q = r_0(x', \xi') - x_1q(x, \xi').$ Here
 $$r_0(x', \xi') = \langle B(x') \xi', \xi' \rangle \geq \alpha_0 |\xi'|^2, \: \alpha_0 > 0,\: \forall \xi  \in \R^{d-1}$$
 and $q(x', \xi') \geq q_0 |\xi'|^2, \: q_0 > 0.$ Define the hyperbolic ${\mathscr H}$, glancing ${\mathscr G}$ and elliptic regions ${\mathscr E}$ by
 $${\mathscr H} = \{(x', \xi') \in T^*(\Gamma):\:1 - r_0(x', \xi')  > 0\},$$
 $${\mathscr G} = \{(x', \xi') \in T^*(\Gamma):\:1 - r_0(x', \xi')  =  0\},$$
 $${\mathscr E} = \{(x', \xi') \in T^*(\Gamma):\:1 - r_0(x', \xi') < 0\}.$$

 For $0 < \delta \ll 1$ consider the function
 $\chi^{-}_{\delta}(x', \xi') = \psi^{-}\Bigl( \frac{r_0(x', \xi') - 1}{\delta}\Bigr)$, where 
 $$\psi^{-} \in C^{\infty} (\R; [0,1]),\: \supp \psi^{-} \subset (-1, -\infty)$$ and $\psi^{-}(t) \equiv 1$ for $t \leq - 3/2$. Let $\zeta_0 =(x'_0, \xi'_0) \in \supp \chi^{-}_{\delta}$ and let ${\mathcal U}$ be a small neighborhood of $\zeta_0$ contained in the set $\{(x', \xi') \in T^*(\Gamma): \: 1 - r_0(x', \xi') \geq \delta/2\} \subset {\mathscr H}.$ Choose $\psi(x', \xi') \in C_0^{\infty}({\mathcal U})$ such that $\psi = 1$ in a neighborhood of $\zeta_0$ and introduce the symbol $\rho(x', \xi') := \sqrt{z - r_0(x', \xi')}.$
 
 Our purpose is to construct a  parametrix
for the problem
\begin{equation} \label{eq:3.1}
\begin{cases} (- h^2\Delta - z) u= 0, \: x \in \Omega,\\
u = \Op_h(\chi^{-}_{\delta}) f, \: x \in \Gamma,\\
u-\frac{\ii \sqrt{z}}{h}-\rm{incoming},\\
\end{cases}
\end{equation}
with $f \in L^2(\Gamma)$ and to obtain an approximation for the operator 
$$\nc(z, h)\Op_h(\chi^{-}_{\delta})f = -\ii h \pa_{\nu} u\vert_{\Gamma}.$$
 The local parametrix of (\ref{eq:3.1}) is $u^{-}_{\psi} = \Phi(x_1) {\mathcal K}^{-}_{\psi} f$ with
\begin{equation} \label{eq:3.2}
({\mathcal K}^{-}_{\psi}  f) (x) =   (2 \pi h)^{- d + 1} \iint e^{\frac{\ii}{h}( \la y', \xi'\ra + \varphi(x, \xi', \theta))} a(x, \xi', \theta, h) f(y') dy'd\xi',
\end{equation} 
where $\Phi(x_1) = \chi_0(\frac{x_1}{\delta_0}),\: \chi_0 \in C_0^{\infty}(\R)$ is such that $\chi_0(t) = 1$ for $|t| \leq \delta_0, \: \chi_0(t) = 2$ for $ |t| \geq 2,\: 0 < \delta_0 \ll 1.$ 
This incoming condition will be arranged later by applying incoming resolvents to local parametrix.

We follow the construction in Section 4,  \cite{V3} and for convenience of the reader we present some details.  The main difference with \cite{V3} is that we treat the case $-c h|\log h| \leq \theta \leq h^{\ep}$ for strictly convex obstacles, while in \cite{V3} the analysis was given for $h^{1- \ep} \leq \theta \leq h^{\ep}$ for obstacles with arbitrary geometry. The symbol $a(x, \xi', \theta, h)$ will have support for $(x', \xi') \in {\mathcal U}$, so in the construction below the condition $1 - r_0(x', \xi') \geq \delta/2$ holds. The phase $\varphi$ satisfies 
\begin{equation} \label{eq:3.3}
\varphi_{x_1}^2 + \la B(x) \varphi_{x'}, \varphi_{x'}\ra = 1 + \ii \theta + \theta^M {\mathcal R}_M, \: \varphi\vert_{x_1 = 0} = - \la x', \xi' \rangle
\end{equation}
 and has the form
$$\varphi = \sum_{j = 0}^{M- 1} (\ii \theta)^j\varphi_j(x, \xi')$$
with real valued phase functions $\varphi_j$. The function $\varphi_0$ is  a local solution of the problem
\begin{equation} \label{eq:3.4}
\begin{cases}
$$\pa_{x_1} \varphi_0 =  \sqrt{1 -  \la B(x) \nabla_{x'} \varphi_0, \nabla_{x'} \varphi_0\ra},\\
\varphi_0\vert_{x_1 = 0}= - \la x', \xi'\ra
\end{cases}
 \end{equation}
existing for small $x_1$ and 
$$\pa_{x_1} \varphi_0\vert_{x_1 = 0} = \sqrt{1 - r_0(x', \xi')},$$
where $ 1 - r_0 \geq \delta/2.$ The sign of $\sqrt{1 - r_0(x', \xi')}$ determines the propagation of singularities in the interior of $\Omega$.
The functions $\varphi_j,\: 1 \leq j \leq M-1,$ satisfy the equations
$$\sum_{j= 0}^k \pa_{x_1} \varphi_j \pa_{x_1} \varphi_{k-j} + \sum_{j= 0}^k \la B(x) \nabla_{x'} \varphi_j \nabla_{x'} \varphi_{k-j} \ra = \ep_k,\: \varphi_k\vert_{x_1= 0} = 0 ,\:1 \leq k \leq M-1$$
with $\ep_1 =1, \: \ep_k = 0, \: k \geq 2$ and the remainder ${\mathcal R}_M$ is bounded uniformly with respect to $\theta.$ From the above equations with $k = 1$ one obtains
\begin{eqnarray} 
\theta \pa_{x_1} \varphi_1\vert_{x_1 = 0} =  \frac{\theta}{2 \sqrt{1- r_0}} \geq  \frac{\theta}{2},\: \rm{for}\: \theta \geq 0,\\
\theta \pa_{x_1} \varphi_1\vert_{x_1 = 0} \geq  - \frac{c h |\log h| }{\sqrt{2\delta}},\: \rm{for} \: \theta < 0.
\end{eqnarray} 
 This implies
 $$ \im \pa_{x_1} \varphi \vert_{x_1 = 0} = \theta \pa_{x_1}\varphi_1\vert_{x_1 = 0} + \co (\theta^2) \geq \frac{\theta}{3},\: \theta \geq 0 $$
and  for small $\theta$ and $x_1$ we have
 $$ \im \varphi= \theta \varphi_1 + \co(x_1 \theta^2) \geq \frac{x_1 \theta}{4}, \: \theta \geq 0,$$
  $$ \im \varphi  \geq -\frac{c x_1 h|\log h|}{2\sqrt{2\delta}},\: \theta < 0.$$ 
 The eikonal equation (\ref{eq:3.3}) yields
 $$(\pa_{x_1} \varphi\vert_{x_1 = 0})^2 = \rho^2 (1 + \co(\theta^M)),$$
 hence for $x_1$ small enough
 \begin{equation} \label{eq:3.7}
 \pa_{x_1} \varphi\vert_{x_1 = 0} = \rho + \co(\theta^{M/2}).
 \end{equation}
 
 The amplitude has the form
 $$a = \sum_{k = 0}^{m} h^k a_k(x, \xi', \theta),$$
 where the functions $a_k$ satisfy the transport equations
 $$2 \ii \pa_{x_1} \varphi \pa_{x_1} a_k + 2 \ii \la B(x) \nabla_{x'} \varphi, \nabla_{x'} a_k\ra + \Delta a_{k-1} = \theta^MQ_{M}^{(k)},\: 0 \leq k \leq m,$$
 $$a_0\vert_{x_1 = 0} = \psi,\: a_k\vert_{x_1 = 0} = 0, \: k \geq 1, \: a_{-1} = 0.$$
 We search the  functions $a_k$ in the form
 $$ a_k = \sum_{j= 0}^{M -1} (\ii \theta)^k a_{k, j} (x, \xi', \theta),$$
 with $a_{0, 0}\vert_{x_1 = 0} = \psi, \: a_{k, j} \vert_{x_1 = 0} = 0$ for $k + j \geq 1.$ 
 We refer to Section 4, \cite{V3} for the determination of $a_{k, j}$.

The construction of ${\mathcal K}^{-}_{\psi} $ implies
$$(h^2 \Delta + z) u^{-}_{\psi} = {\mathcal K}_{1, \psi}^{-} f +  {\mathcal K}^{-}_{2, \psi} f.$$
Here 
$$ {\mathcal K}^{-}_{1, \psi}f = \Bigl [h^2 \Bigl( 2 \Phi'(x_1)\pa_{x_1} + \Phi''(x_1)\Bigr) + h a_0(x) \Phi'(x_1)\Bigr] {\mathcal K}^{-}_{\psi}f,$$
$$({\mathcal K}^{-}_{2, \psi} f)(x) = (2\pi h)^{-d +1} \iint e^{\frac{\ii}{h}( \la y', \xi' \ra + \varphi(x, \xi', \theta))} A^{-}_{2,\psi} (x, \xi', \theta, h) f(y') dy'd\xi'$$
with
\begin{equation} \label{eq:3.8}
A^{-}_{2, \psi} = \Phi(x_1) \Bigl(\theta^M{\mathcal R}_{M} a + \theta ^M \sum_{k = 0}^m h^{k +1} {\mathcal Q}_{M}^{(k)} + h^{m+2} \Delta a_m\Bigr).
\end{equation}
By using a partition of unity on $\supp \chi^{-}_{\delta}$ with functions $\psi_j$, we arrange $\sum_{j= 1}^J {\psi_j} = \chi^{-}_{\delta}$ and consider the parametrix
$w = \sum_{j= 1}^J u^-_{\psi_j}.$ Set 
$${\mathcal K}^{-}_{k} = \sum_{j = 1}^J {\mathcal K}^{-}_{k, \psi_j}, \: k = 1, 2,\: A^{-}_2 = \sum_{j = 1}^J A^{-}_{2, \psi_j}.$$

Let $R_0^{-}(z, h) = (-h^2\Delta_0 - z)^{-1}$ be the incoming resolvent of the free Laplacian $\Delta_0$ in $\R^d$ witch for $ \im z > 0$ is an analytic operator valued function  bounded from $L^2(\R^d)$ to $H^2(\R^d).$  Let $\Psi\in C_0^{\infty}(\R^d)$ be a cut-off function such that $\Psi = 1$ on a small neighborhood of $K$. Then  the cut-off resolvent $\Psi (-h^2\Delta_0 - z) ^{-1} \Psi$ is analytic in $\C.$
Introduce the semi-classical Sobolev spaces $H_h^s(X)$ with semi-classical norm 
$$\|f\|_{H^s_h(X)} = \|\la h D\ra^s f\|_{L^2(X)},\: \la h D \ra : = ( 1 - h^2\Delta_X))^{1/2}.$$

For  $\re \sqrt{z} \geq 1$ the sut-off resolvent satisfy the estimates (see for instance Theorem 3.1 in \cite{DZ} and recall Remark 2.1) .
\begin{equation} \label{eq:3.9} 
\|\Psi R_0^{-} (z, h)  \Psi\|_{L^2(\R^d) \to H^j_h(\R^d)} \leq C_jh^{-1} ( h+ \sqrt{|z|})^{j- 1} e^{\frac{L}{h} (\im z)_{-}},\: j = 0, 1, 2,
\end{equation} 
where $L > \sup \{|x - y|: x, y \in \supp \Psi\}$ and $x_{-} = \max \{0, -x\}.$ In particular, for $\im z > 0$ the exponential factor on the right hand side is equal to 1, while for 
$-ch |\log h| \leq \im z < 0$ this factor is bounded by $h^{-c L}$.\\

Similarly, introduce the incoming resolvent $R^{-}(z, h)= (-h^2 \Delta_D - z)^{-1}$, where $\Delta_D$ is the Laplacian with Dirichlet boundary condition on $\Gamma$ and domain
${\mathcal D} =H^2(\Omega) \cap H^1_0(\Omega).$ Then $R^{-}(z, h): L^2(\Omega) \longrightarrow {\mathcal D}$ for $\im z > 0$ and $\Psi R^{-}(z, h) \Psi$ admits a meromorphic continuation in $\C$ with poles in $\{z \in \C: \Im z < 0\}.$ Since the obstacle $K$ is non-trapping,  for 
$$\re \sqrt{z} \geq 1,\:  \im z \geq -c h |\log h|,\:0 < h \leq h_0$$
 we have the estimates (see Theorem 4.43 in \cite{DZ} and Theorem 2 in Chapter X, \cite{Va})
\begin{equation} \label{eq:3.10}
\|\Psi R^{-}(z, h) \Psi\|_{L^2(\Omega) \to H^{j}_h(\Omega)} \leq C_j h^{-1}e^{T \frac{(\Im z)_{-}}{h}}, C_j > 0, \: T > 0, \: j = 0, 1, 2.
\end{equation} 

To build a global parametrix, consider
$$\tilde{u} = w - R_0^{-}(z, h) {\mathcal K}^{-}_1f - R^{-} (z, h)  {\mathcal K}^{-}_2f.$$
The operator $K^{-}_2$ is bounded from $L^2(\Gamma)$ to $L^2_{loc}(\Omega).$ To prove this, consider $K^{-}_2$ as a $h$-Fourier integral operator with real phase function
$\la y', \xi'\ra - \re \varphi(x, \xi', \theta)$ and amplitude $b(x, \xi', \theta, h) = e^{-\frac{\im \varphi}{h}} A_2^{-}(x, \xi', \theta, h)$
depending on the parameter $x_1 \in [0, 2\delta_0].$ Write 
 $$b = \exp \Bigl(-\frac{x_1 \theta}{2h} \Bigl( \frac{1}{\sqrt{1 - r_0}} + \co(x_1) + \co(\theta)\Bigr)\Bigr) A_2^{-} = \exp \Bigl(-\frac{x_1 \theta}{2h} g\Bigr) A_2^{-}.$$
Therefore
$$\pa_{x'}^{\alpha} b = \sum_{|\beta_1| + |\beta_2| = |\alpha| } C_{\beta_1, \beta_2}  \Bigl(\frac{x_1 \theta}{2h}\Bigr)^{|\beta_1|} \exp \Bigl(-\frac{x_1 \theta}{2h} g\Bigr) (\pa_{x'}^{\beta_1} g)(\pa_{x'}^{\beta_2} b_{\beta_2})$$
with some symbol $b_{\beta_2}.$
For small $0 \leq x_1\leq 2 \delta_0$ and small $h$ the product $$\Bigl(\frac{x_1 \theta}{2h}\Bigr)^{|\beta_1|} \exp \Bigl(-\frac{x_1 \theta}{2h} g\Bigr), \: \theta \geq 0$$
 is bounded since $g \geq 1/2.$  For $-c h |\log h| \leq \theta < 0$ the product 
 $$\Bigl(\frac{c|\log h|x_1 \theta}{\sqrt{2\delta} }\Bigr)^{|\beta_1|} \exp \Bigl(-\frac{c |\log h| x_1 \theta}{ \sqrt{2\delta}} g\Bigr), \: \theta \geq 0$$ 
 is bounded by $(C_1 |\log h|)^{|\beta_1|} h^{- C_2}$ with $C_ 1 > 0, C_2 > 0$ independent of $h$. Similarly, we estimates the derivatives $\pa_{\xi'}^{\beta} b.$ Consequently,
 taking into account (\ref{eq:3.8}), we have
 $$|\pa_{x'}^{\alpha} \pa_{\xi'}^{\beta} b| \leq C_{\alpha, \beta} |\log h|^{|\alpha| + |\beta|} h^{-C_2}\Bigl(\co(h^{\ep M}) + \co(h^{m+ 2})\Bigr), \: C_2 > 0$$
  with large $M$ and $m$. For $\theta \geq  0$ the factor  $|\log h|^{|\alpha| + |\beta|} h^{-C_2}$ must be replaced by 1. On the other hand,  for small $x_1$ the phase $\la y', \xi'\ra - \re \varphi(x_1, \xi', \theta)$ is non-degenerate and
 $$\Bigl | \det \Bigl(\frac{\pa^2 \re \varphi}{\pa x'\pa \xi'}\Bigr) \Bigr | \geq D > 0$$
 because $\varphi\vert_{x_1 = 0} = -\la x', \xi'\ra.$ By a standard argument we consider the $h$-pseudo-differential operator $(K_2^{-})^* K_2^{-}$ with parameter $x_1$ and  deduce the estimate 
 \begin{equation} \label{eq:3.11}
  \|K_2^{-} f\|_{L^2(\Omega)} \leq A_N h^N \|f\|_{L^2(\Gamma)}
  \end{equation} 
  with big $N$ choosing $M$ and $m$ large and depending on $N$. Applying the estimates (\ref{eq:3.10}), we conclude that $\|R(z, h) K_2^{-} f\|_{H^{3/2}_h(\Gamma)} = \co(h^{N-3/2})\|f \|_{L^2}$  
exploiting the operator of restriction 
$$\gamma_0 = \co(h^{-1/2}):  H^2_h(\Omega) \rightarrow H^{3/2}_h(\Gamma) .$$

    To deal with the term $R_0^{-}(z, h) {\mathcal K}^{-}_1f$,  we will apply the argument in  Appendix A.II.1, \cite{G} for  ${\mathcal K}^{-}_1 f = F(h) $. For this purpose repeating the proof of Corollary A.II.4, \cite{G}, one proves that the semi-classical wave front set $\widetilde{WF}(F(h))$  is included in the intersection of the set of outgoing rays issued from $\{(y', \eta') \in T^*(\Gamma):\: (y', \eta') \in \supp \chi^{-}_{\delta}\}$ with a bounded set $\mathcal W$ such that ${\rm dist}\: ( {\mathcal W}, K  ) \geq \ep_0 > 0.$ The set $\mathcal W$  is determined by the support of the derivatives  $\Phi^j(x_1),\: j = 1,2$, while the wavefront $\widetilde{WF}(K^{-}_{\psi})$  is determined by the set $\{(x, \varphi_x):\: (x', \xi') \in \supp \chi^{-}_{\delta} \}$ and the fact that the phase $\varphi$ is chosen so that $\varphi_{x_1} > 0$ (see \cite{G}, \cite{A}). Finally, $\widetilde{WF}(R_0^{-}(z, h) F(h))$ is given by the outgoing rays issued from $\widetilde{WF}(F(h))$ (By Remark 2.1,  the resolvent $R_0^{-} (z, h)$ is outgoing in the sense of  \cite{G}). Since $K$ is strictly convex, these rays don't meet the boundary $\Gamma$ and 
    $$\|R_0^{-}(z, h) F(h)\|_{H^m_h({\mathcal O})} = \co_m(h^{\infty})\|f\|_{L^2},\: \forall m \in N $$  
    in a small neighborhood $\co$ of $K$. We conclude that $u - \tilde{u}$ is a solution of the problem
   \begin{equation} \label{eq:3.12}
\begin{cases} (- h^2\Delta - z)(u- \tilde{u})= 0, \: x \in \Omega,\\
u - \tilde{u} = -(R_0(z, h) F(h))\vert_{\Gamma}, \: x \in \Gamma,\\
(u- \tilde{u})-\rm{incoming}.\\
\end{cases}
\end{equation} 
Therefore $\|\nc(z, h)(\Op_h(\chi^{-}_{\delta})f - \tilde{u}\vert_{\Gamma})\|_{H^1_h} \leq C_N h^N \|f\|_{L^2},\: \forall N$ and 
$$\nc(z, h) \tilde{u}\vert_{\Gamma} = \nc(z, h) w\vert_{\Gamma} + \co(h^N).$$
By our construction we obtain $\nc(z, h)w\vert_{\Gamma} = T_N(z, h) \Op_h(\chi^{-}_{\delta})f $ with a $h$-pseudo-differential operator $T_N(z, h)$ having principal symbol
$\sqrt{z - r_0} $ and
\begin{equation} \label{eq:3.13}
\|(\nc(z, h) - T_N(z, h)) \Op_h(\chi^{-}_{\delta}) \|_{L^2 \to H_h^{1}} \leq C'_N h^{N}.
\end{equation}      
\subsection{Parametrix for $\tilde{\nc}(\th)$}

 In the trace formula (\ref{eq:2.9}) we have integration with respect to $\th = \frac{\ii}{\lambda}$ and it is convenient to have a parametrix holomorphic with respect to $\th.$ Comparing with the parametrisation $\lambda = \frac{\ii \sqrt{z}}{h},$ used in the first part, we give
$\re \th = \frac{\re \sqrt{z}}{|z|}h$ and $\re \th = h$ if and only if $\re\sqrt{z}= |z|.$ This leads to difficulties if we wish to extend the approximation $T_N(z, h)$ as a holomorphic function of $\th$ modulo some remainder. To overcome this problem, we will construct another parametrix in the hyperbolic region for 
$\th \in L,$ where $L$ is defined by (\ref{eq:2.10}).
Setting $\re \th = h$, for small $h_0$ we have $|\im \th | \leq C_1 h^2.$

      Next we repeat without changes the construction in Appendix A2, \cite{StV} and search a parametrix for the problem
   \begin{equation} \label{eq:3.14}
\begin{cases} (- \th^2 \Delta - 1) u= 0, \: x \in \Omega,\\
u = \Op_{\th}(\chi^{-}_{\delta}) f, \: x \in \Gamma,\\
u- \rm{incoming}.\\
\end{cases}  
\end{equation} 
Here $\Op_{\th}(\chi^{-}_{\delta})$ is a pseudo-differential operator with large parameter $\frac{1}{\th}$ having the form
$$(\Op_{\th} (\chi^{-}_{\delta}) f)(x') = (2 \pi \th)^{-d+ 1}\iint e^{\frac{\ii }{\th} \la y' - x', \xi'\ra} \chi^{-}_{\delta} (x', \xi') f(y') dy' d\xi'.$$
The symbol $\chi^{-}_{\delta}$ has compact support with respect to $\xi'$ and the above operator is well defined for $\th \in L$ (see Appendix A1, \cite{StV}) since
\begin{equation} \label{eq:3.15}
\Bigl|\frac{1}{\th} - \frac{1}{h}\Bigr|= \Bigl|\frac{ \im \th}{h \th}\Bigr | \leq C_1. 
\end{equation}
Notice that for $\lambda \in  \Lambda$ we study the problem (\ref{eq:2.11}).

 The local parametrix in local geodesic coordinates $(x_1, x')$ introduced above has the form $\Phi(x_1) H_N(\th),$ where
$$(H_N(\th) f)(x) =  (2 \pi \th)^{-d + 1} \iint e^{\frac {\ii}{\th} (\la y', \xi'\ra + \psi(x, \xi'))} a(x, \xi', \th) f(y') dy' d\xi'.$$  
The phase function $\psi$ is real valued and satisfies the equation
\begin{equation} \begin{cases} |\nabla_x \psi|^2 = 1,\\
\psi\vert_{x_1 = 0} = - \la x', \xi'\ra. \end{cases} \label{eq:3.16}
\end{equation}
In local coordinates used in the previous construction we have
$$(\pa_{x_1} \psi)^2 + \la B(x) \nabla_{x'}\psi, \nabla_{x'} \psi \ra = 1.$$
The phase $\psi$ is determined as  $\varphi_0$ above and we obtain $\pa_{x_1} \psi\vert_{x_1 = 0} = \sqrt{1 - r_0(x', \xi')} \geq \sqrt{\delta}$  for $(x', \xi') \in \supp\: \chi^{-}_{\delta}.$ The amplitude has the form
$$a = \sum_{j = 0}^{N- 1} a_j(x,\xi') \th^{j}$$
and  $a_j(x, \xi')$ are determined as solutions of the transport equations
$$ 2 \ii \la \nabla \psi, \nabla a_j \ra + \ii (\Delta \psi) a_j  = - \Delta a_{j-1},\: j = 0,...,N-1,$$
with conditions $a_0\vert_{x_1 = 0} = \chi^{-}_{\delta}(x', \xi'), \: a_j\vert_{x_1 = 0} = 0,\: j \geq 1$
and $a_{-1}= 0.$ By using (\ref{eq:3.15}), we may write $H_N(\th)$ as a Fourier integral operator with real valued phase $\frac{\la y', \xi'\ra + \psi(x, \xi')}{h}$ making the factor  
$$\exp\Bigl(\ii (\frac{1}{\th} - \frac{1}{h}) (\la y', \xi'\ra + \psi(x, \xi'))\Bigr)$$
in the amplitude. Thus as in Corollary A.II.8 in \cite{G} and (A.19) in \cite{StV}, we obtain 
$$\widetilde{WF}(H_N(\th)) \subset  \Big\{ (x, \xi, y', \eta') \in T^*(U \setminus \Gamma) \times T^*(\Gamma):\: \|\xi\|_{x}= 1, $$
$\:\:\:\:\:\:\: \:\:\:$ $(x, \xi)$  belongs to the outgoing ray issued from $(y', \eta') \in \supp \chi^{-}_{\delta}\Big\}.$\\
Here $U \subset \R^d$ is a small neighborhood of $\Gamma$ and $\|\xi\|_x$ is the norm of dual variable induced by local coordinates. Next we construct a global parametrix by using a partition of unity, the incoming free resolvent $(-\th^2 \Delta_0 - 1)^{-1} $ and the incoming resolvent $(-\th^2 \Delta_D - 1)^{-1}$ of the Dirichlet Laplacian $\Delta_D.$ The operator
$\tilde {\nc}(\th) \Op_{\th}(\chi^{-}_{\delta}) f = -\ii\th^{-1} \pa_{\nu} u\vert_{\Gamma}$ (see Section 2) has an approximation by a $\th$-pseudo-differential operator $\tilde{T}_N(\th)\Op_{\th}(\chi^{-}_{\delta})$ and $\tilde{T}(\th)$ has principal symbol $\sqrt{1 - r_0(x', \xi')}$. Similarly to (\ref{eq:3.13}) we get
\begin{equation} \label{eq:3.17}
\|(\tilde{\nc}(\th) - \tilde{T}_N(\th)) \Op_h(\chi^{-}_{\delta}) \|_{L^2 \to H_h^{1}} \leq B_N h^{N}, \: \th \in L.
\end{equation}  
The advantage of the above construction is that the symbol of $\tilde{T}_N(\th)$ is holomorphic for $\th \in L.$  
    
 \section{Location of the eigenvalues and incoming resonances}

 Recall that an eigenfunction $f$ of $G$ with eigenvalue $\lambda$ satisfies the equation $\cc(\lambda) f = 0$. The same is true for the incoming resonances $\lambda$. We will use the parametrisation  $\lambda = \frac{\ii \sqrt{z}}{h}, \: z = 1 + \ii \Im z, \: |\im z | \leq 1$ introduced in Section 2 and the equation (\ref{eq:2.13}).

 Denote by $(., .)$ the scalar product in $L^2(\Gamma)$ and by $\|.\|$ the $L^2(\Gamma)$ norm.
  Throughout this section we choose $0 < \delta \ll c_0^2$ and impose the condition 
  \begin{equation}\label{eq:4.1}
-\delta_0 \leq - c h |\log h| \leq  \im z \leq \min\Bigl\{\delta, \frac{\sqrt{1 - c_1^2}}{2 c_1} \sqrt{\delta}\Bigr\}= \delta_0.
 \end{equation}  
 Hear $0 < c_0 \leq c_1 <  1$ are the constants introduced in Section 1 and $\delta$ was used in the construction of the parametrix in the hyperbolic region in Section 3. In subsection 4.1 there are no restrictions on $\delta > 0$, in subsection 4.2 we take $\delta \ll c_0^2$ and in subsection 4.3 we choose $\delta \leq  c_0.$ Notice that for the analysis of eigenvalues we work with  $\im z > 0,$ while for the analysis of incoming resonance we deal with $\im z < 0.$
  
  In \cite{P1} it was proved that for $0 < \gamma(x) < 1$ the eigenvalues $\lambda$ of $G$ are located in the region
 $$\{\lambda \in \C:\: |\re\lambda| \leq C_{\ep}( 1 + |\im \lambda|)^{1/2 + \ep},\: \re \lambda< 0\}, \: 0 < \ep \ll 1.$$
 Since $\re \lambda = - \frac{\im \sqrt{z}}{h}  < 0 ,\: \im \lambda = \frac{\re \sqrt{z}}{h},\: 1 \leq \re \sqrt{z} \leq \sqrt{2},$ for fixed $ \ep < 1/6$ and small $h$ we deduce
 $$  0 <\im z \leq C_0 h^{1/2 - \ep} \leq h^{1/3}, \: \frac{1}{h} \leq |\im \lambda| \leq \frac{\sqrt{2}}{h}$$

 We choose $0 < h \leq \min\{\delta^6, \delta_0\}.$
For fixed $\delta$  if $h$ is small enough,  $0 < \im z \leq h^{1/3}$ implies the inequality on the right hand side of (\ref{eq:4.1}). (In Section 5 we take $h = o((1- c_1)^2)$ in the proof of Proposition 5.1, so for $c_1 \nearrow 1$ we must work with $h$ small enough.) On the other hand, $ - ch |\log h| \leq \im z < 0$ yields
$$0 < \re \lambda \leq \frac{c |\log h|}{2 \re \sqrt{z}}\leq \frac{c}{2} \log |\im \lambda|.$$
Our purpose is to establish that  following 
\begin{thm} Assume the inequalities $(\ref{eq:4.1})$ with $\delta \leq c_0^2$  and $h < \min\{\delta^6, \delta_0\}$  small. Then for every eigenvalues having the form $\ii  \sqrt{z}/h$ and every incoming resonances lying in $M_c$ and having the form $\ii  \sqrt{z}/h$ we have the estimate
\begin{equation*}
0 < |\im z| \leq B_N h^N, \: \forall N \in \N.
\end{equation*}

\end{thm}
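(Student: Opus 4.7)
My plan is to follow the strategy outlined by the author: reduce the equation $(\nc(z,h) - \sqrt z \gamma(x))f = 0$ microlocally to the hyperbolic region, where the parametrix $T_N(z,h)$ from subsection 3.1 provides a precise approximation of $\nc(z,h)$, and then extract an inequality for $|\im z|$ by taking the imaginary part of the quadratic form. First I would fix the microlocal partition of unity $f = \qm f + \qn f + \qp f$ with symbols in $S^{0,0}_{cl}$ supported in small neighborhoods of $\mathscr H, \mathscr G, \mathscr E$ respectively, arranging that $\supp \qm$-symbol $\subset \{r_0 \le 1 - \delta/2\}$ with $0 < \delta \ll c_0^2$. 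Invoking Propositions~4.1 and 4.2 (proved in subsections~4.1--4.2 from the parametrices of \cite{V1}, \cite{V3}, \cite{V4}, \cite{Sj}), one concludes that $\|\qp f\|_{L^2(\Gamma)} + \|\qn f\|_{L^2(\Gamma)} = \oc(h^\infty)\|f\|$. The assumption $\gamma < 1$ is crucial here, since it forces the characteristic set of the principal symbol $\sqrt{z - r_0} - \sqrt z \gamma(x)$ to lie strictly inside $\mathscr H$, making $\nc(z,h) - \sqrt z \gamma$ elliptic on $\supp \qp$.

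Set $g := \qm f$; then $\|g\| \ge \tfrac{1}{2}\|f\|$ for small $h$. Composing the eigenvalue/resonance equation with $\qm$ and transferring cutoffs, I obtain
$$(\nc(z,h) - \sqrt z\, \gamma(x))\, g = \oc(h^N)\|f\|\ \text{ in } L^2(\Gamma),\ \forall N.$$
Using the parametrix bound (\ref{eq:3.13}), I replace $\nc(z,h)\qm$ by $T_N(z,h)\qm$ modulo $\oc(h^N)\|f\|$ to get
$$(T_N(z,h) - \sqrt z\, \gamma(x)) g = \oc(h^N)\|f\|.$$
Taking the scalar product with $g$ and extracting the imaginary part yields
$$\im\bigl((T_N(z,h) - \sqrt z \gamma(x)) g, g\bigr) = \oc(h^N)\|f\|\|g\|.$$

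Now I would exploit that $T_N(1,h)$ has real principal (and subprincipal) symbol on $\supp \qm$ by construction, so $T_N(1,h)$ is self-adjoint modulo $\oc(h^N)$. Writing $z = 1 + \ii \theta$ with $\theta = \im z$ and Taylor expanding,
$$T_N(z,h) - T_N(1,h) = \ii \theta\, \pa_z T_N|_{z=1} + \oc(\theta^2), \qquad \sqrt z = 1 + \tfrac{\ii\theta}{2} + \oc(\theta^2),$$
I obtain
$$\im\bigl((T_N(z,h) - \sqrt z \gamma) g, g\bigr) = \theta\, \re\Bigl(\bigl(\pa_z T_N|_{z=1} - \tfrac{1}{2}\gamma\bigr) g, g\Bigr) + \oc(\theta^2)\|g\|^2 + \oc(h^N)\|g\|^2.$$
The principal symbol of $\pa_z T_N|_{z=1} - \tfrac{1}{2}\gamma$ is $\tfrac{1}{2\sqrt{1 - r_0}} - \tfrac{1}{2}\gamma(x)$, which on $\supp \qm$ is bounded below by $\tfrac{1}{2}(1 - c_1) > 0$ because $1 - r_0 \le 1$ and $\gamma \le c_1 < 1$. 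Applying the sharp G\aa rding inequality gives
$$\re\Bigl(\bigl(\pa_z T_N|_{z=1} - \tfrac{1}{2}\gamma\bigr) g, g\Bigr) \ge \tfrac{1 - c_1}{2}\|g\|^2 - C h\|g\|^2.$$
Combining, and taking $|\theta|$ and $h$ small compared to $1 - c_1$ so that the $\oc(\theta^2)$ and $\oc(\theta h)$ corrections can be absorbed into $\tfrac{1}{4}|\theta|(1 - c_1)\|g\|^2$, I arrive at $|\theta|\|g\|^2 \le C_N h^N \|f\|^2$, hence $|\im z| \le B_N h^N$ for all $N$. The sign of $\theta$ only matters insofar as $|\theta|$ is what is bounded, so the same argument handles both the eigenvalue ($\theta > 0$) and incoming resonance ($\theta < 0$) cases simultaneously.

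The main obstacle, as the author already signals, is the coupling between the glancing and hyperbolic pieces: because $\qm$ and $\qn$ have overlapping supports and the parametrix only behaves well on $\qm$, one cannot cleanly separate the contributions of $\qm f$ and $\qn f$ when composing with $\nc(z,h)$. Controlling this interaction requires the precise support properties of the parametrix constructed in subsection~3.1 (particularly that the phase $\varphi_0$ generates outgoing rays avoiding $K$ thanks to strict convexity) together with the decay estimates for $\im \varphi$ that keep the construction valid in the range $-ch|\log h| \le \im z \le h^\epsilon$. A secondary delicate point is quantifying the dependence on $\delta$ and $h$: $\delta$ must be taken $\ll c_0^2$ so that the elliptic parametrix in subsection~4.1 controls $\qp f$, while $h \le \delta^6$ is needed for the symbolic errors to dominate the $1 - c_1$ gap, which is the origin of the threshold $A_2$ stated after the theorem.
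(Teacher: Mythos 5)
Your strategy is the same as the paper's (reduce to $\qm f$ via Propositions 4.1--4.2, replace $\nc(z,h)$ by the hyperbolic parametrix through (\ref{eq:3.13}), take the imaginary part, use self-adjointness of $T_N(1,h)$, and conclude from positivity of the real part of the $z$-derivative), but there is a genuine gap in the step where you expand at $z=1$ and ``absorb'' the $\co(\theta^2)$ remainder. That remainder is governed by $\pa_z^2T_N$, whose symbol on $\supp\chi^{-}_{\delta}$ is only $\co(\delta^{-3/2})$ (since $|z-r_0|\ge 1-r_0\ge\delta$ there), so absorption into $\tfrac14|\theta|(1-c_1)\|g\|^2$ requires $|\theta|\lesssim\delta^{3/2}(1-c_1)$ --- a $\delta$-dependence you never track, and even your weaker requirement ``$|\theta|$ small compared to $1-c_1$'' is an extra hypothesis. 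The theorem, however, must cover every $\im z$ allowed by (\ref{eq:4.1}), i.e.\ up to $\delta_0=\min\{\delta,\frac{\sqrt{1-c_1^2}}{2c_1}\sqrt\delta\}$, which in general is neither $\ll 1-c_1$ nor $\ll\delta^{3/2}(1-c_1)$ (take $c_1$ close to $1$ and $c_0$ moderate). So your argument only excludes eigenvalues/resonances with $B_Nh^N<|\im z|\lesssim \delta^{3/2}(1-c_1)$, a strictly weaker statement; since the threshold $\delta_0$ is exactly what produces the constant $A_2$ and the regions in Theorems 1.1 and 1.2, this is not cosmetic. The paper avoids the quadratic remainder altogether: it writes the imaginary part exactly as $|\im z|\,\bigl|\re\bigl((\pa_z T_N(z_t,h)-\tfrac{\gamma}{2\sqrt{z_t}})\fh,\fh\bigr)\bigr|$ at the intermediate point $z_t=1+\ii t\im z$ (formula (\ref{eq:4.12})), and then proves ellipticity of $\re\bigl(\frac{1}{\sqrt{1+\ii t\im z-\tr}}-\frac{\gamma}{\sqrt{1+\ii t\im z}}\bigr)$ for \emph{all} $\im z$ satisfying (\ref{eq:4.1}); this is the computation with $d_0=(1-\tr)(1-\gamma^2(1-\tr))\ge\frac{\delta}{2}(1-c_1^2)$ and the condition $\gamma^2(\im z)^2\le d_0/2$, which is precisely where the bound $\frac{\sqrt{1-c_1^2}}{2c_1}\sqrt\delta$ in (\ref{eq:4.1}) comes from. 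To repair your proof you must either restrict $|\im z|$ (changing the theorem) or carry out this positivity analysis at complex $z_t$.

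A secondary point you gloss over: the symbol $\frac{1}{2\sqrt{z_t-r_0}}$ (or $\frac{1}{2\sqrt{1-r_0}}$) is singular on ${\mathscr G}$ and non-real in ${\mathscr E}$, so it is not an admissible symbol on all of $T^*(\Gamma)$ and the sharp G\aa rding inequality cannot be invoked directly with the constant you claim; microlocal support of $g$ alone does not suffice. The paper first replaces $r_0$ by $\tr=r_0(1-\beta(r_0))+(1-\tfrac{\delta}{2})\beta(r_0)$, which coincides with $r_0$ on $\supp\chi^{-}_{\delta}$ and satisfies $1-\tr\ge\delta/2$ globally, checking that the replacement changes the quadratic form only by $\co(h^{\infty})\|\fh\|^2$; you need the same modification before applying G\aa rding. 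Finally, a minor remark: Propositions 4.1 and 4.2 give $\qp f=\qn f=0$ exactly, so $f=\qm f$ and $(\nc(z,h)-\sqrt z\,\gamma)\qm f=0$ holds exactly; your $\co(h^{\infty})$ bookkeeping and the commutator transfer are then unnecessary.
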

Theorem 4.1 implies the statement of Theorem 1.1. Indeed,
  one obtains
   $$ |\im \sqrt{z}| = \big| \frac {\im z}{2 \re \sqrt{z}}\big| \leq \frac{B_N}{2} h^N$$
     and for $0 < h  \leq \delta_0$ we deduce
     $$|\re \lambda | = \Big |\frac{ \im \sqrt{z}}{h}\Big | \leq \frac{B_{N + 1} }{2} h^{N}\leq C_N |\im \lambda|^{-N},\: |\im \lambda| \geq \frac{1}{\delta_0}.$$  
   Next 
   $$\frac{ \sqrt{\delta}}{2 c_1} \leq  \frac{c_0}{2c_1} \leq \frac{1}{2}$$
   and from (\ref{eq:4.1}) we give $\delta_0 \leq  \min\{c_0^2,\frac{\sqrt{1 - c_1^2}}{2}\}.$

   In this section the coordinates  in $T^*(\Gamma)$ are denoted by $(x', \xi').$ The principal symbol of a parametrix for $\nc(z, h) - \gamma\sqrt{z}$ in hyperbolic and elliptic regions (see subsection 3.1 and Section 5 in \cite{V3}) becomes
 \begin{equation} \label{eq:4.2}
 \sqrt{z- r_0} - \gamma \sqrt{z} = \rho - \gamma \sqrt{z}, 
 \end{equation} 
 where
  $$\rho(x', \xi') : = \sqrt{1 + \ii \im z - r_0(x', \xi')}.$$ 
  Let
 $$\Sigma : = \{(x', \xi') \in T^*(\Gamma): \: r_0(x', \xi')  = 1 - \gamma^2(x')\} \subset {\mathscr H},$$
  be the singular set, where the operator $\Op_h(\sqrt{1 - r_0}- \gamma)$ is not elliptic. Then 
  $1 - \gamma^2 \leq 1 - c_0^2  < 1 - \delta$ implies
 $\Sigma \subset \{(x', \xi'): \: r_0 \leq 1 - \delta\}.$
 
  Introduce a partition of unity on $\R$ given by
 $$\psi^{-}(t) + \psi^0(t) + \psi^{+}(t) \equiv 1,\: \forall t \in \R,$$
 where  $\psi^0(t) \in C_0^{\infty} (\R; [0, 1]), \:\psi^0(t) = 1$ for $|t| \leq 1,\: \psi^0(t) = 0$ for $|t| \geq 3/2,$ while
 $$ \psi^{\pm}(t) \in C^{\infty}(\R; [0, 1]),\: \supp \psi^{-} \subset (-\infty, -1),\: \supp \psi^{+} \subset (1, +\infty).$$
  Define the symbols
 $$\chi^{\pm}_{\delta}(x', \xi') = \psi^{\pm}\Bigl(\frac{r_0(x', \xi')- 1}{\delta}\Bigr),\: \chi^0_{\delta}(x', \xi') = \psi^0\Bigl(\frac{r_0(x', \xi')- 1}{\delta}\Bigr)$$
 and notice that $\Sigma \cap \Bigl(\supp \chi^{0}_{\delta} \cup \supp \chi^{+}_{\delta} \Bigr) = \emptyset.$
 Let $Q^{\pm}_{\delta} = \Op_h(\chi^{\pm}_{\delta}),\: Q^0_{\delta} = \Op_h(\chi^0_{\delta})$ be $h$-pseudo-differential operators. For $h^{1/2} \leq \delta$ one obtains the estimates
 $$\|Q^{j}_{\delta}\|_{L^2(\Gamma) \to L^2(\Gamma)} \leq Q_j,\: j = \pm, 0$$
 with constants $Q_j$ independent of $h$ and $\delta$. In fact, 
  $$\sum_{|\alpha| \leq d} h^{|\alpha|/2} \sup_{(x', \xi') \in T^*(\Gamma)}\Bigl |\partial_{x'}^{\alpha} \psi^{j}\Bigl(\frac{r_0- 1}{\delta}\Bigr)\Bigr  | \leq Q_j \: j = \pm, 0$$ 
  and an application of Proposition 3.1 in \cite{V1}  implies result.  
  \subsection{Analysis of $Q^{+}_{\delta}f$}
Here we treat the elliptic region ${\mathscr E}$, where we have a parametrix $S(z, h)$ with principal symbol (\ref{eq:4.2}),
   such that for small $h$ one has
   \begin{equation} \label{eq:4.3}
  \|(\nc(z, h) -S(z, h) )Q^{+}_{\delta} \|  \leq A_1h
  \end{equation} 
  with  constant $A_1 > 0$ independent of $h$. This implies
  $$  |((\nc(z, h) - S(z, h))Q ^{+}_{\delta}f, Q^{+}_{\delta} f )|  \leq A_2h \|Q^{+}_{\delta} f\|^2.$$  
  Here and below we denote by $A_j$ different constants independent of $h$. These constants may depend of $\delta$, while  by $B_j$ we denote different constants independent of $\delta$ and $h$.
 We will prove the following
 \begin{prop} Assume the inequalities $(\ref{eq:4.1})$ and $h$ sufficiently small.Then $Q^+_{\delta} f = 0.$ 
 \end{prop}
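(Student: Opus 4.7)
The plan is to exploit the fact that on $\supp \chi^{+}_{\delta}$ the principal symbol of $S(z,h) - \sqrt{z}\gamma$, namely $\rho - \gamma\sqrt{z}$ with $\rho = \sqrt{z - r_0}$, has imaginary part bounded strictly below away from zero, so that sharp G\aa rding combined with the eigenvalue/resonance equation $(\nc(z,h) - \sqrt{z}\gamma)f = 0$ forces $\qp f$ to be microlocally negligible. On $\supp \chi^{+}_{\delta}$ we have $r_0 \geq 1 + \delta$ so $z - r_0 = (1 - r_0) + \ii \im z$ lies in a region where the branch $\rho$ fixed by the elliptic parametrix in Section 5 of \cite{V3} satisfies $\im \rho \geq \sqrt{r_0 - 1}(1 + o(1)) \geq \sqrt{\delta}/2$ modulo corrections of size $\co(|\im z|/\sqrt{\delta})$. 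By the very choice of $\delta_0$ in (\ref{eq:4.1}), we have $|\im \sqrt{z}| \leq |\im z|/(2\re\sqrt{z}) \leq \frac{\sqrt{1 - c_1^2}}{4 c_1}\sqrt{\delta}$, hence $\gamma(x')\im\sqrt{z} \leq \frac{\sqrt{1-c_1^2}}{4}\sqrt{\delta} < \sqrt{\delta}/4$. Consequently $\im(\rho - \gamma\sqrt{z}) \geq c_\delta := c\sqrt{\delta}$ uniformly on $\supp \chi^{+}_{\delta}$, with $c > 0$ depending only on $c_0, c_1$.

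Second, since $S(z,h)$ is a classical $h$-pseudo-differential operator whose symbol is elliptic and bounded on $\supp \chi^{+}_{\delta}$, sharp G\aa rding applied to $\im(S(z,h) - \sqrt{z}\gamma)$ yields
\begin{equation*}
\im \bigl( (S(z,h) - \sqrt{z}\gamma)\qp f, \qp f \bigr) \geq c_\delta \|\qp f\|^2 - A_3 h \|\qp f\|^2.
\end{equation*}
Using (\ref{eq:4.3}) I replace $S(z,h)\qp$ by $\nc(z,h)\qp$ at the price of an error $\co(h)\|f\|\,\|\qp f\|$. At the same time, starting from the equation $(\nc(z,h) - \sqrt{z}\gamma)f = 0$ and testing against $(\qp)^*\qp f$, I rewrite
\begin{equation*}
\bigl((\nc(z,h) - \sqrt{z}\gamma)\qp f, \qp f\bigr) = \bigl([\qp, \nc(z,h) - \sqrt{z}\gamma]f, \qp f\bigr),
\end{equation*}
and then decompose the commutator through the parametrix: $[\qp, \nc(z,h)] = [\qp, S(z,h)] + [\qp, \nc(z,h) - S(z,h)]$. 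The first term is $\co(h)$ by standard $h$-calculus since the symbol of $\qp$ is supported where $S(z,h)$ is pseudo-differential; the second is bounded in operator norm by applying (\ref{eq:4.3}) both to $\qp$ and to a slightly enlarged cutoff $\tilde{\qp}$ for which $\tilde{\qp}\qp = \qp + \co(h^\infty)$.

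Combining these two inequalities yields $c_\delta \|\qp f\|^2 \leq A_4 h \|f\|\,\|\qp f\|$, so $\|\qp f\| \leq A_4 c_\delta^{-1} h \|f\|$. Iterating the argument: replace $\qp$ by a nested family $\qp_j$ with $\qp_{j+1} \equiv 1$ on $\supp \chi^+_{\qp_j}$ and feed the improved bound back into the commutator estimates to gain a factor $h$ at each step, one obtains $\|\qp f\| \leq C_N h^N \|f\|$ for every $N \in \N$, which for $h \leq \delta^6$ and $f$ a fixed eigenfunction/resonance state gives $\qp f = 0$. The main obstacle is the second commutator $[\qp, \nc(z,h) - S(z,h)]$: $\nc(z,h)$ is not globally pseudo-differential, so its commutator with $\qp$ is only controlled through (\ref{eq:4.3}) from the right. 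This is circumvented by the microlocal nesting argument above, which allows one to effectively replace $\nc(z,h)$ by $S(z,h)$ on both sides of $\qp$ modulo $\co(h^{\infty})$ errors.
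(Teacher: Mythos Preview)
Your overall strategy matches the paper's: exploit that $\im(\rho-\gamma\sqrt{z})$ is bounded below by $c\sqrt{\delta}$ on $\supp\chi_\delta^+$, apply a G\aa rding-type lower bound to $\im\bigl((S(z,h)-\sqrt{z}\gamma)\qp f,\qp f\bigr)$, and use the equation $(\nc(z,h)-\sqrt{z}\gamma)f=0$ together with commutator estimates to bound the same quantity from above. The paper introduces an auxiliary symbol $r_e\ge 1+\delta$ to make the G\aa rding step global, but this is a technical device and your direct version is in the same spirit.

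There is, however, a genuine gap at the end. From your argument you obtain at best
\[
\|\qp f\|\le C_N h^{N}\|f\|\qquad\text{for every }N,
\]
i.e.\ $\qp f=\co(h^\infty)\|f\|$. This does \emph{not} imply $\qp f=0$ for a fixed $h>0$; the constants $C_N$ depend on $N$ and there is no reason the right-hand side vanishes. Your sentence ``which for $h\le\delta^6$ and $f$ a fixed eigenfunction/resonance state gives $\qp f=0$'' is therefore unjustified. Moreover, the nested-cutoff iteration you sketch is fragile: to have $\qp_{j+1}\equiv 1$ on the support of $\nabla\chi_{\delta_j}^+$ you must shrink $\delta_{j+1}$ (enlarge the support toward the glancing set), and the ellipticity constant $c_{\delta_{j+1}}\sim\sqrt{\delta_{j+1}}$ then decreases geometrically. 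The product $\prod_j c_{\delta_j}^{-1}$ can overwhelm the gain $(Ch)^N$, so it is not clear you ever reach $\co(h^N)$ for arbitrary $N$, let alone zero.

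The paper closes the argument differently, without iteration. After normalising $\|f\|=1$ it arranges \emph{all} error terms on the right-hand side to be bounded by a quantity of the form $(\text{small in }h)\cdot\|\qp f\|^2$ rather than $(\text{small in }h)\cdot\|f\|\,\|\qp f\|$. The key observation is that the commutator $[S(z,h),\qp]$ is an $h$-pseudodifferential operator whose symbol is supported in the transition strip $\{1+\delta\le r_0\le 1+\tfrac{3}{2}\delta\}\subset\supp\chi_\delta^+$, so one can insert another copy of an elliptic cutoff and recover a second factor of $\|\qp f\|$; likewise the term $(\qp(S-\nc)f,\qp f)$ is handled using that $(\qp)^*\qp f$ is microlocalised in the elliptic region, where $(\nc-S)^*$ acts with norm $\co(h)$. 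One then has
\[
\frac{\sqrt{\delta}}{2\sqrt{2}}\,\|\qp f\|^2\ \le\ \bigl(B\sqrt{\delta}\,h^{1/3}+A_\delta h\bigr)\,\|\qp f\|^2,
\]
which for small $h$ forces $\qp f=0$ in one step. To repair your proof you should replace the iteration by this mechanism: show that every error term can be estimated by $o(1)\|\qp f\|^2$ (not merely $o(1)\|f\|\,\|\qp f\|$), and then conclude directly.
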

 \begin{proof}
 Choose a function  $\zeta(t) = (1 + \delta - t) \alpha(t)$ with $\alpha(t) \in C^{\infty}(\R; [0, 1]) ,\: \alpha(t) = 1$ for $t \leq 1 + \delta,\: \alpha(t) = 0$ for $t \geq 1 + \frac{3\delta}{2}$ and introduce the symbol $r_e = r_{0} + \zeta(r_{0})$. Obviously, $r_e = r_0$ for $r_0 \geq 1 + \frac{3}{2}\delta$ and 
   $$r_e = r_0(1 - \alpha(r_0)) + (1 + \delta) \alpha(r_0) \geq 1 + \delta,\: \forall (x', \xi'). $$
   Therefore
 $$R_e = \sqrt{1 - r_e +\ii t \im z }- \sqrt{1 - r_0 +\ii t \im z} = \frac{r_{0}(1 - \zeta(r_{0}))}{\sqrt{1 - r_e +\ii t \im z} +\sqrt{1 - r_0 +\ii t \im z}}$$
 vanishes on the support of $\chi^{+}_{\f3}$ and $\Op_h(R_e)Q^{+}_{\f3}f$ can be estimated by ${\mathcal O}(h^{\infty}) \|f\|.$ 
  
      Let $ s_e : = \sqrt{1 - r_e + \ii \im z}.$   Then
  $|\im \sqrt{z}| =   \frac{|\im z|}{2\re \sqrt{z}} \leq \frac{|\im z|}{2}.$
On the other hand, 
  $$\im s_e = \re \sqrt{r_e - 1 - \ii \im z} \geq \sqrt{r_e - 1}.$$
According to (\ref{eq:4.1}),
 $$|\im z|  < \frac{\sqrt{\delta}} {c_1} \leq \frac{\sqrt{r_e - 1}}{c_1},$$
  and we give
 $$\im \Bigl(s_e -  \gamma \sqrt{z}\Bigr) \geq  \sqrt{r_e -1} - c_1 \frac{|\im z|} {2} \geq \frac{\sqrt{r_e - 1}}{2} \geq c_2 \sqrt{1 + r_0},$$
 where  $0 <  c_2 \leq \frac{\sqrt{\delta}}{2 \sqrt{2}}$.  In fact, for $r_0 \leq 1 + \frac{3}{2} \delta$ we arrange
 $\delta \geq 4c_2^2( 2 +  \frac{3 \delta}{2})$
 taking 
 $$0 < c_2 \leq \frac{\sqrt{\delta}}{2 \sqrt{2 + \frac{3 \delta}{2}}}\leq \frac{\sqrt{\delta}}{2 \sqrt{2}}.$$
  For $r_0 \geq 1 + \frac{3}{2} \delta$ we obtain $r_e= r_0$ and
 $$4c_2^2 + 1  \leq (1 - 4 c_2^2) ( 1 + \frac{3 \delta}{2})  \leq ( 1- 4 c_2^2) r_0$$
 which is satisfied for $\frac{3}{2} \delta \geq 4c_2^2 (2 + \frac{3 \delta}{2}).$ 
  By a standard argument, we deduce
  \begin{equation} \label{eq:4.4}
 \im \Bigl ( (S(z, h) - \gamma \sqrt{z}) Q^{+}_{\delta} f, Q^{+}_{\delta} f \Bigr) \geq c_2 (\Op_h(\sqrt{1 + r_0}) Q^{+}_{\delta} f, Q^{+}_{\delta} f ) - A_3 h \|\qp f\|^2.
 \end{equation}
 Consequently,
 $$\|(\nc(z, h) - \gamma \sqrt{z} )\qp f\|_{H^{-1/2}} \|\qp f\|_{H^{1/2}}  \geq c_2 \|\qp f\|_{H^{1/2} } - A_4 h \|\qp f\|_{H^{1/2}}^2$$
 and for small $h$ one obtains
 \begin{equation} \label{eq:4.5} 
 \| \qp f\|_{H^{1/2}}  \leq C_2\big \|\Bigl(\frac{(\nc(z, h)}{\sqrt{z}} - \gamma\Bigr)\qp f\big\|_{H^{-1/2}}
 \end{equation} 
 with constant $C_2 > 0$ depending of $\delta.$ The estimate (\ref{eq:4.5}) holds for every function $f \in H^{1/2}$ and not only for eigenfunctions. In Section 6 we will use it  for the proof of (\ref{eq:6.5}).

  As an application of (\ref{eq:4.4}) we will show that $\cc(f) = 0$ implies $\qp f = 0.$ Assume $\| f \| = 1$ and observe that
  $$((S(z, h) - \sqrt{z} \gamma) \qp f, \qp f) = (\qp (S(z, h) - \sqrt{z} \gamma) f, \qp f) + ([S(z, h), \qp] f, \qp f) $$
  $$- ([\sqrt{z} \gamma, \qp] f, \qp f) =\Bigl(([S(z, h), \qp] - [\sqrt{z} \gamma, \qp]) f, \qp f\Bigr)+ \co_{\delta}(h) \|\qp f\|^2.$$ 
  Here in the second equality we used the equality $(\nc(z, h) - \sqrt{z} \gamma) f = 0.$
  On the other hand,
   the symbol of the commutator $[ \Op_h(s_e), \qp  ]$ modulo a remainder $\tilde{b}_{N, e, \delta}$ has the form
  $$b_{N, e, \delta}= \sum_{1 \leq j \leq N-1} \frac{(\ii h)^j}{j!}\sum_{|\alpha|= j}\Bigl[ D_{\xi'}^{\alpha} (s_2 )D_{x'}^{\alpha} (\chi^{+}_{\delta})
  - D_{\xi'}^{\alpha} (\chi^{+}_{\delta}) D_{x}^{\alpha} (s_e) \Bigr]= \sum_{j = 1}^{N-1} b_j$$  
  with $D_{x'} = -\ii \partial_{x'}, \: D_{\xi'} = -\ii \partial _{\xi'}.$ The derivatives  $D_{x'}^{\alpha}(\chi^{+}_{\delta})$ yield a factor $ \delta^{-|\alpha|}$, while the derivatives $D_{\xi'}^{\alpha} s_e$ yield a factor $(1 - r_e + \ii \im z)^{1/2 -|\alpha|}.$ Since $r_e \geq 1 + \f3$,  by using  the condition $h^{1/3} \leq \delta,$ we estimate 
  $$|b_j| \leq \frac{C_j}{j !} h^{j/3}\Bigl(\frac{h^{1/3}}{\delta}\Bigr)^{2j}\sqrt{\delta} \leq D_j \sqrt{\delta} h^{j/3}, \: \forall (x', \xi'),\: 1 \leq j \leq N-1$$
 with constant $C_j$ depending of the derivatives of $\psi^{+}$ and $r_0$ and independent of $\delta$ and $h$. A similar estimate holds for $h^{|\beta|/2}|\partial_{x'}^{\beta} b_j|, \: |\beta| \leq d,$ and for the remainder $\tilde{b}_{N, e, \delta}$. Consequently, 
 \begin{equation} \label{eq:4.6} 
 \|[ S(z, h), \qp ]\|_{L^2 \to L^2} \leq B_5 \sqrt{\delta} h^{1/3}
 \end{equation} 
 with a constant $B_5> 0$ independent of $h$ and $\delta.$ The same argument can be applied to estimate the commutator $[ \gamma, \qp]$. Thus we obtain a upper bound
 $$\big|\im \Bigl ( (S(z, h) - \gamma \sqrt{z}) Q^{+}_{\delta} f, Q^{+}_{\delta} f \Bigr) \big|\leq (B_6 \sqrt{\delta} h^{1/3} + A_5 h) \|\qp f\|^2$$
 with constant $B_6 >0$ independent of $\delta$ and $A_5 > 0$ depending on $\delta$.  Combing this with (\ref{eq:4.4}), taking $c_2 = \frac{\sqrt{\delta}}{2 \sqrt{2}}$ and using $h^{1/3} \leq \delta$, we deduce
 $$\frac{\sqrt{\delta}}{2 \sqrt{2}} \|\qp f\|^2 \leq \Bigl(B_6 \sqrt{\delta} h^{1/3} + (A_3 + A_5)h^{2/3} \delta\Bigr) \|\qp f\|^2.$$
 We fix $\delta$ and the constants $A_3, A_5.$ Choosing $h$ small enough, we obtain $\qp f = 0.$ \end{proof}
 \begin{rem} In this subsection the argument works without restrictions on $0 < \delta < 1.$
 \end{rem}

   \subsection{Analysis of $Q^{0}_{\delta} f$} 
Our purpose is to prove the following
\begin{prop}Assume the inequalities $(\ref{eq:4.1})$ and $h$ small enough.There exists constant $D > 0$ such that for $\delta \leq \frac{c_0^2}{D}$ we have $Q^0_{\delta} f = 0.$ 
\end{prop}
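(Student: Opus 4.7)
The plan mirrors Proposition 4.1, with the ellipticity of the boundary symbol on $\supp\chi^0_\delta$ now supplied by the lower bound $\gamma(x)\geq c_0>0$ instead of by $\im z$. The key geometric observation is that on $\supp\chi^0_\delta$ one has
\[
|\sqrt{z-r_0}|\leq \sqrt{3\delta/2+|\im z|}\leq C\sqrt{\delta},\qquad |\sqrt{z}\,\gamma(x')|\geq c_0,
\]
so the principal symbol $\sqrt{z-r_0}-\sqrt{z}\,\gamma$ of the boundary operator satisfies $|\sqrt{z-r_0}-\sqrt{z}\,\gamma|\geq c_0-C\sqrt{\delta}\geq c_0/2$ as soon as $\delta\leq c_0^2/D$ with $D$ sufficiently large. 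In particular the singular set $\Sigma$ does not meet $\supp\chi^0_\delta$, and the boundary operator is elliptic there.

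The argument then follows the pattern of Proposition 4.1. Invoke the semiclassical parametrix $S^0(z,h)$ for $\nc(z,h)\qn$ in the glancing region constructed in \cite{V1,V3,V4,Sj}, with principal symbol $\sqrt{z-r_0}$ on $\supp\chi^0_\delta$ and error $\|(\nc-S^0)\qn\|_{L^2\to L^2}\leq A_6 h$. Expand $((S^0-\sqrt{z}\gamma)\qn f,\qn f)$, using the equation $(\nc-\sqrt{z}\gamma)f=0$ (which kills the leading term $(\qn(\nc-\sqrt{z}\gamma)f,\qn f)$) together with Proposition 4.1 (so that $f=\qm f+\qn f$), to reduce the calculation to a sum of commutator contributions plus $\co_\delta(h)\|\qn f\|^2$. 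A G{\aa}rding-type argument using the approximate inverse $E_0=\Op_h\!\bigl(\chi^0_\delta/(\sqrt{z-r_0}-\sqrt{z}\gamma)\bigr)$ (bounded uniformly by $2/c_0$) then yields the lower bound
\[
|((S^0-\sqrt{z}\gamma)\qn f,\qn f)|\geq (c_0/2-\co_\delta(h))\|\qn f\|^2.
\]

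The technical heart is the commutator bound. Standard symbolic calculus gives $\|[\sqrt{z}\gamma,\qn]\|_{L^2\to L^2}\leq Ch/\delta$. For $[S^0,\qn]$ composed with $\qm$ we substitute the hyperbolic parametrix of subsection 3.1, $\nc\qm=T_N\qm+\co(h^N)$, reducing the question to estimating the Poisson bracket $\{\sqrt{z-r_0},\chi^0_\delta\}$; it is supported on the transition annulus $\delta\leq|r_0-1|\leq 3\delta/2$, where $|\partial_{\xi'}\sqrt{z-r_0}|\lesssim\delta^{-1/2}$ and $|\partial_{x'}\chi^0_\delta|\lesssim\delta^{-1}$, giving a commutator norm of order $h\delta^{-3/2}$; the same bound applies to $[S^0,\qn]\qn$ via the parametrix $S^0$. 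Under the standing hypothesis $h\leq\delta^6$ this is $\leq\delta^{9/2}$, strictly smaller than $c_0/2$ once $\delta\leq c_0^2/D$, and the argument closes as at the end of Proposition 4.1 to force $\qn f=0$. The main obstacle, which the author flags after (1.7), is precisely that the wavefronts of $\qn$ and $\qm$ genuinely overlap on this annulus while the symbol $\sqrt{z-r_0}$ develops a square-root singularity at the glancing set $r_0=1$; the calibration $h\leq\delta^6$ and $\delta\leq c_0^2/D$ is exactly what makes the scale separation succeed.
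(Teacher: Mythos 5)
There is a genuine gap. Your overall shape is the right one (use Proposition 4.1 to drop $Q^{+}f$, exploit that $|\sqrt{z-r_0}|=\co(\sqrt{\delta})$ on $\supp\chi^0_\delta$ against the lower bound $\gamma\sqrt z\geq c_0$, and kill the cross term with $Q^{-}_{3\delta/2}f$ via the hyperbolic parametrix $T_N$ of subsection 3.1), but the step your argument actually rests on is not available: you postulate a glancing parametrix $S^0(z,h)$ with principal symbol $\sqrt{z-r_0}$ and error $\|(\nc-S^0)Q^0_\delta\|_{L^2\to L^2}\leq A_6h$, \emph{uniformly} for $-ch|\log h|\leq \im z\leq h^{\ep}$. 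No such estimate exists in the cited references, and it is precisely the obstruction here: the approximation of $\nc(z,h)$ by $\Op_h(\sqrt{z-r_0})$ from \cite{V1}, \cite{V4} carries an error $\co(h/\im z)$, hence is only usable when $\im z\gtrsim h^{1/3}$; for $h^{2/3}\leq\im z\leq h^{1/3}$ the best available bound (Proposition 3.3 of \cite{V3}) loses a factor $|\im\tl|^{-1/4}\sim h^{1/12}$; and for $|\im z|\leq h^{2/3}$ (which includes the resonance range $\im z<0$) the symbol of $\nc$ is only controlled in the degenerate class of \cite{Sj}, estimate (10.32), with behaviour $(|1-r_0|+h^{2/3})^{1/2-\beta_1}$ after a second microlocalization. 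For the same reason your approximate inverse $E_0=\Op_h\bigl(\chi^0_\delta/(\sqrt{z-r_0}-\sqrt z\gamma)\bigr)$ is not ``bounded uniformly by $2/c_0$'': the symbol is bounded in sup norm, but its $\xi'$-derivatives blow up like $|z-r_0|^{-1/2-\cdots}$ at the glancing set when $\im z$ is of size $h^{2/3}$ or smaller (or negative), so neither Calder\'on--Vaillancourt nor Proposition 3.1 of \cite{V1} gives the claimed operator bound, and the symbolic calculus behind your G\aa rding step breaks down exactly where it is needed.

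The paper circumvents this by never inverting, nor even symbolically expanding, $\nc-\sqrt z\gamma$ on the glancing support. It isolates the term $\re(\gamma\sqrt z\,Q^0_\delta f,Q^0_\delta f)\geq c_0\|Q^0_\delta f\|^2$ and only needs an \emph{operator-norm smallness} bound $\|\nc(z,h)Q^0_{3\delta/2}\|_{L^2\to L^2}\leq B_8\sqrt\delta+A_9h^{1/12}$ (Lemma 4.1), proved by splitting $\im z$ into the three regimes above and quoting a different result in each; note the final error is $h^{1/12}$, not $\co(h)$, which is consistent with the fact that your assumed $\co(h)$ parametrix cannot hold. With that lemma the inequality $c_0\|Q^0_\delta f\|^2\leq(B_{12}\sqrt\delta+A_{10}h^{1/12})\|Q^0_\delta f\|^2$ closes the argument after fixing $\delta\sim c_0^2$ and taking $h$ small. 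Your transition-annulus commutator estimates are fine as far as they go (the derivatives of $\chi^0_\delta$ live where $|1-r_0|\geq\delta$), but they do not repair the missing uniform approximation of $\nc$ at the glancing set itself, which is where $\supp\chi^0_\delta$ sits.
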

 \begin{proof} First, using $Q_{\f3}^{+} f = 0$, from $\cc(f) = 0$ we deduce
 $$(\nc(z, h) - \sqrt{z} \gamma) (Q_{\f3}^{-} f + Q_{\f3}^{0}f) = 0.$$
 Consider
 $$ \re \Bigl(Q^{0}_{\delta}\Bigl(-\nc(z, h) + \sqrt{z} \gamma \Bigr) (Q^{-}_{\f3}f + Q^{0}_{\f3}f), Q^{0}_{\delta} f \Bigr) = 0.$$  
 It is easy to estimate the terms involving $Q^{-}_{\f3},$    since 
 \begin{equation} \label{eq:4.7}
  \rm{supp} \:\chi^{0}_{\delta} \cap \supp \chi^{-}_{\f3} = \emptyset.
  \end{equation} 
  In the hyperbolic region we have a paramterix $T_N(z, h)$ and we can apply (\ref{eq:3.13}).
  This implies 
  $$|(Q_{\delta}^{0}(- \nc(z, h) + \sqrt{z} \gamma)Q_{\f3}^{-} f, Q_{\delta}^0 f) | \leq C_N h^N \|Q_{\delta}^0 f\|^2$$ 
and we give
  $$ \re \Bigl(Q_{\delta}^{0}\Bigl(- \nc(z, h) + \gamma\sqrt{z}\Bigr) Q^{0}_{\f3}f , Q^{0}_{\delta} f \Bigr) \leq C_N h^N \|Q_{\delta}^0 f\|^2.$$
   Next
  $$Q^{0}_{\delta} \gamma Q^{0}_{\f3}= [Q^{0}_{\delta}, \gamma] Q^{0}_{\f3} + \gamma Q^{0}_{\delta}(Q^{0}_{\f3} - 1) + \gamma Q^{0}_{\delta}.$$
  The commutator on the right hand side has a norm ${\mathcal O}(\frac{h}{\delta})= \co (\delta^2)$ and for the second term we use the fact that $\chi^{0}_{\f3} =1$ on the support of $\chi^{0}_{\delta}.$ Consequently,
  \begin{eqnarray} \label{eq:4.8}
  c_0 \|Q_{\delta}^0 f\|^2 \leq \re \Bigl( \gamma\sqrt{z} Q^0_{\delta}f , Q^{0}_{\delta} f \Bigr) \leq  \re \Bigl(Q_{\delta}^{0}\nc(z, h) Q^0_{\f3}f, Q_{\delta}^0 f\Bigr) \nonumber \\
  +  (C'_N h^N + B_7 \delta^2) \|Q_{\delta}^0 f\|^2
  \end{eqnarray}
  with constant $B_7 > 0$ independent of $\delta$ and $h$.
  
  The problem is reduced to an estimate of the term involving $\nc(z, h) Q_{\f3}^0$ and we prove the following
  \begin{lem} For $\im z$ satisfying $(\ref{eq:4.1})$ with $\delta \leq \frac{c_0^2}{D}$ and $h$ small enough we have
  \begin{equation} \label{eq:4.9}
  \|\nc(z, h) Q_{\f3}^0 \|_{L^2 \to L^2} \leq B_8 \sqrt{\delta} + A_9 h^{1/12}
  \end{equation}
  with $B_8 > 0$ independent of $\delta$ and $A_9 > 0$ independent of $h$.
  \end{lem} 
  \begin{proof} We consider several cases concerning $\im z.$
  
  {\bf 1.} $\im z \geq h^{1/3}.$ Following the results in \cite{V1}, \cite{V2}, \cite{V4}, for $\im z \geq h^{1/2 -\ep}$ and $\ep = 1/6$, we obtain
  $$\|(\nc(z, h) - \Op_h(\rho))Q_{\f3}^0\|_{L^2 \to L^2} \leq C_1 \frac{h}{\im z}  \leq C_1 h^{2/3}$$
  with constant $C_1 > 0$ independent of $h$ and depending on $\delta.$ For $\im z \leq \delta$ on the support of $\chi_{\f3}^0$  we have
  $|\sqrt{1 - r_0 + \ii \Im z}| \leq \sqrt{\frac {13 \delta}{4}}$. Applying Proposition 3.1 in \cite{V1} we deduce
  $$\|Op_h(\rho) Q_{\f3}^0\|_{L^2 \to L^2} \leq B_9 \sqrt{\delta}.$$
  
  {\bf 2.} $h^{2/3}\leq \im z \leq h^{1/3}.$      
  In this case, we will apply Proposition 3.3 in \cite{V3}. Set
 $\tl = -\ii \lambda = \frac{\sqrt{z}}{h}$ and for $f \in L^2(\Gamma)$ consider the problem
 \begin{equation} \label{eq:4.10}
 \begin{cases} (\Delta + \tl^2) u = 0 \: \rm {in}\: \Omega,\\
 u  = Q^{0}_{\delta} f\:\rm{on} \: \Gamma,\\
 u -(\ii \tl)-\rm {incoming}.\end{cases}
 \end{equation}
 Here we replace $\delta^2$ in Section 3, \cite{V3} by $\delta.$ Since $\im z > 0$ and $u$ is $\ii \sqrt{z}/h$-incoming, $u$ is decreasing for $|x| = r \to \infty.$  Hence we may use an  integration by parts to obtain Lemma 3.1 in \cite {V3} for unbounded domains and the proof of Proposition 3.3 in \cite{V3} works.  To apply this proposition, we need the conditions 
 $$1 \leq |\im \tl | \leq \delta \re \tl, \: \re\tl \geq C_{\delta} \gg 1.$$
 Obviously, 
 $$\im \tl = h^{-1} \frac{\im z}{2 \re \sqrt{z}},\: \re \tl = h^{-1} \re \sqrt{z} \geq h^{-1}.$$
  The analysis of the proof of Proposition 3.3 in  \cite{V3} shows that it is sufficient to take $C_{\delta} = \frac{1}{\delta^{1/8}}$ and  $\frac{\re\sqrt{z}}{h} \geq \frac{1}{h} \geq \frac{1}{\delta^{1/8}}$   holds. On the other hand, for $ \im z \geq h^{2/3}$ and small $h$ one gets
 $$1 \leq \frac{1}{2h^{1/3} \sqrt{2}} \leq \frac{ \im z}{2 h \re \sqrt{z}}$$
 and $\Im z \leq 2 \delta (\re \sqrt{z})^2$ is satisfied by (\ref{eq:4.1}). 
 Now the estimate (3.11) in \cite{V3} for the solution $u$ of (\ref{eq:4.10}) yields
 $$\|\nc(z, h) Q_{\delta}^0 f\|_{L^2(\Gamma)} \leq B_{10} \Bigl( \sqrt{\delta} + |\im \tl |^{-1/4}\Bigr)\|f\|_{L^2(\Gamma)}\leq B_{10}\Bigl(\sqrt{\delta} + 2^{3/8} h^{1/12}\Bigr)\|f\|_{L^2(\Gamma)}$$
 with a constant $B_{10} > 0$ independent of $\delta$ and $\tl.$ We replace $\delta$ by $3\delta/2$ changing the constant $B_{10}$.
 
{\bf 3.} $- c h |\log h| \leq \im z \leq h^{2/3}$. In this case we have $|\im z| \leq h^{2/3}$ and a semi-classical parametrix for the exterior Dirichlet-to-Neumann map $\nc(z, h)$ for strictly convex obstacle has been constructed in Chapter 10 of \cite{Sj}. In particular, the estimate (10.32) in \cite{Sj} for the principal symbol $n_{ext}(x', \xi'; h)$ of $\nc(z, h)$ in suitable local coordinates yields
$$|\pa_{x'}^{\alpha}\pa_{\xi'}^{\beta} n_{ext}(x', \xi'; h)| \leq C_{\alpha, \beta} (|1- r_0| + h^{2/3})^{1/2 - \beta_1},$$
where $\xi_1$ is the dual variable after a second microlocalisation. The derivatives with respect to $x'$ are estimated by $\co (|1- r_0|^{1/2} + h^{1/3})$ and applying Proposition 3.1 in \cite{V1} once more, we deduce
 \begin{equation*}
  \|\nc(z, h) Q^{0}_{\delta}\|_{L^2 \to L^2}  \leq  B_{11}  (\sqrt{\delta} + h^{1/3})
  \end{equation*} 
  with a constant $B_{11} > 0$ independent of $\delta$ and $h$.   
  Combining the estimates in the three cases we deduce (\ref{eq:4.9}).
  \end{proof}
 
 \begin{rem} The estimate $(\ref{eq:4.9})$ is not optimal. The term $h^{1/12}$ comes form the case $2$, where we have used the results of \cite{V3} established for obstacles with arbitrary geometry. For strictly convex obstacles we may apply the parametrix constructed in Section 5, \cite{P1} for $h^{2/3} \leq \im z \leq h^{\ep}, \: 0 < \ep \ll 1.$ However, Theorem $5.2$ in \cite{P1} yields an estimate for $\|\nc(z, h) \Op_h(\chi_{\ep/2}^0)\| = \co(h^{\ep/4})$ and some extra work is necessary. Moreover,  if $h^{\ep/2} \leq \delta $, then $h$ should be taken very small for $\ep \ll 1.$
\end{rem}
Going back to (\ref{eq:4.8}), we obtain the estimate
$$c_0 \|Q_{\delta}^0 f\|^2 \leq (B_{12} \sqrt{\delta} + A_{10} h^{1/12} ) \|Q_{\delta}^0 f\|^2$$
with a constant $B_{12} > 0$ independent of $\delta.$ We fix $\delta = \frac{c_0^2}{4 B_{12}^2} $ and then $A_{10}$ will be fixed too. Finally, for small $h$ we arrange $A_{10} h^{1/12} < c_0 /2$ and conclude that $Q_{\delta}^0 f = 0.$ \end{proof}

 For further references notice that for all $ f \in H^{1/2}$,  $\delta \leq \frac{c_0^2}{D}$ and small $h$ we have an estimate similar to (\ref{eq:4.5})
  \begin{equation} \label{eq:4.11}
  \|\qn f\|_{H^{1/2}} \leq C_4 \big\|\Bigl(\frac{\nc(z, h)}{\sqrt{z}} - \gamma \Bigl) \qn f\big \|_{H^{-1/2}}
  \end{equation} 
  with constant $C_4 > 0$ depending of $\delta$. To do this, consider
  $$\re\Bigl((- \frac{(\nc(z, h)}{\sqrt{z}} + \gamma )\qn f, \qn f\Bigr)$$  
  and apply (\ref{eq:4.9}) for the norm of $\nc(z, h).$
  
 \subsection{Analysis of $Q^{-}_{\delta} f$}
 In this subsection we prove Theorem 4.1.\\
 {\it Proof of Theorem 4.1}
 Set $\fh := Q^{-}_{\delta} f$ and consider 
 $$0 = \im \Bigl( (\nc(z, h) - \gamma \sqrt{z} ) \fh, \fh \Bigr)$$
 $$ = \im \Bigl( (T_{N}(z, h) - \gamma \sqrt{z} )\fh, \fh \Bigr) + \co (h^N)\|\fh \|^2,$$
where $T_N(z, h)$ is the parametrix in the hyperbolic region constructed in subsection 3.1.
 The operator $T_N(1, h)$ is self-adjoint, hence $\im (T_N(1, h) \fh, \fh) = 0.$ This implies
  \begin{eqnarray} \label{eq:4.12}
 \Bigl |\im \Bigl( (T_{N}(z, h) - \gamma \sqrt{z} )\fh, \fh \Bigr)\Bigr | \nonumber\\
 =  |\im z| \Bigl | \re\Bigl( \Bigl(\frac{\partial T_{N}(z_{t}, h)}{\partial z} -  \frac{\gamma}{2\sqrt{z_{t}}}\Bigr) \fh, \fh\Bigr )\Bigr | = \co (h^{N})\|\fh \|^2
 \end{eqnarray} 
 with $z_{t}= 1 + \ii t \im z,\: 0 < t < 1,\:|\im z| \leq \delta $. Introduce the operator
 $$F = \Op_{h}\Bigl(\frac{1}{ 2\sqrt{z_{t} - r_{0}}}\Bigl) - \frac{\gamma}{2 \sqrt{z_{t}}}$$
 As in  Lemma 3.9 in \cite{V1} and Lemma 4.1 in \cite{P1} we obtain 
 \begin{equation} \label{eq:4.13}
 \Bigl |\re\Bigl( \Bigl(\frac{\partial T_{N}(z_{t}, h)}{\partial z} - \frac{\gamma}{ 2\sqrt{z_{t}}}\Bigr) \fh, \fh \Bigr) - \re (F (\fh), \fh)\Bigr| \leq A_{11} h \|\fh\|^2
 \end{equation}
 with constant $A_{11}$ independent of $h.$
 Next $\re (F g, g) = \frac{1}{2} ((F + F^*)g, g)$ and
 the principal symbol of the self-adjoint operator $\frac{1}{2} (F + F^*)$ is
 $$\frac{1}{2}\re\Bigl(\frac{1}{\sqrt{1 +\ii t \im z - r_{0}}} - \frac{\gamma}{\sqrt{1 + \ii t \im z}}\Bigr).$$
 Let $\beta(t) \in C^{{\infty}}(\R; [0, 1])$ be such that $\beta(t) = 0$ for $t \leq 1- \delta$ and $\beta(t) = 1$ for $t \geq 1- \delta/2,\: \beta'(t) \geq 0,\: \forall t \in \R.$ Introduce the symbol 
   $$\tr( x', \xi'):  = r_{0}(1 - \beta(r_0 ))+ \Bigl(1 -\frac{\delta}{2}\Bigr)\beta(r_0)$$
   and notice that
   $$1 -\tr = (1 - r_0)(1 - \beta(r_0)) + \frac{\delta}{2} \beta(r_0).$$
 Then $r_0 = \tr$ on $\supp \chi^{-}_{\delta}$ and for $1 - \delta \leq r_0 \leq 1- \delta/2$ one deduces
$$1 - \tr \geq \frac{\delta}{2} (1 - \beta(r_0)) + \frac{\delta}{2} \beta(r_0) = \delta/2,$$
because $1- \frac{3\delta}{2}  \geq 1- c_0^2 >0.$ Consequently, 
  $$1- \tr \geq \delta/2, \: \forall (x', \xi') \in T^*(\Gamma).$$  
 Moreover,
 $$R_{h} = \sqrt{1 +\ii t \im z - \tr}- \sqrt{1 +\ii t \im z - r_{0}} = \frac{r_{0}- \tr}{\sqrt{1 +\ii t \im z - \tr} +\sqrt{1 +\ii t \im z - r_{0}}}$$
 implies $\|\Op_h(R_h)\fh\| = {\mathcal O}(h^{\infty})\|\fh\|.$
  The same is true for the operator with symbol
 $$ \frac{1}{\sqrt{1 +\ii t \im z - \tr}}- \frac{1}{\sqrt{1 +\ii t \im z - r_{0}}}.$$ 
  We are going to study the operator with  principal symbol
 \begin{equation} \label{eq:4.14}
 s(x', \xi'; z) = \re\Bigl(\frac{1}{\sqrt{1 +\ii t \im z - \tr}} - \frac{\gamma}{\sqrt{1 + \ii t \im z}}\Bigr).
 \end{equation}
 Our purpose is to prove that for $\im z$ satisfying (\ref{eq:4.1}), the symbol $s(x', \xi'; z)$ is elliptic.
 
 Set $y = \sqrt{1 + t^2 (\im z)^2},\: q = \sqrt{(1 - \tr)^2 + t^2 (\im z)^2},$ and let
 $$z_{t} = ye^{\ii \varphi},\: 1- \tr + \ii \Im z = q e^{\ii \psi},\: 0 \leq |\varphi| \leq \pi/4,\: 0 \leq |\psi| \leq \pi/4.$$
 Therefore
 $$s(x', \xi'; z)= \re \Bigl (\frac{e^{-\ii \psi/2}}{\sqrt{q}}- \gamma \frac{e^{-\ii \varphi/2}}{\sqrt{y}}\Bigr)= \frac{1}{\sqrt{y q} }\Bigl[ \sqrt{y} \cos \frac{\psi}{2} - \gamma \sqrt{q} \cos \frac{\varphi}{2} \Bigr]$$
$$= \frac{1}{\sqrt{y q} }\frac{ y \cos^2 \frac{\psi}{2} - \gamma^2 q \cos^2 \frac{\varphi}{2}} {\sqrt{y} \cos \frac{\psi}{2} + \sqrt{q} \cos \frac{\varphi}{2}}$$
$$= \frac{1}{2\sqrt{y q}}\frac{ y (1 + \cos \psi) - \gamma^2 q (1 + \cos \varphi)} {\sqrt{y} \cos \frac{\psi}{2} + \sqrt{q} \cos \frac{\varphi}{2}}. $$
 On the other hand,
 $$ y (1 + \cos \psi) - \gamma^2 q (1 + \cos \varphi) = y\Bigl (1 + \frac{1 - \tr}{q}\Bigr) - \gamma^2 q \Bigl(1 + \frac{1}{y}\Bigr)$$
  $$= \frac{1}{ y q} \Bigl[ y^2(1 - \tr + q) - \gamma^2 q^2(1 + y)\Bigr].$$
  Write  the symbol in the brackets $[ ... ]$ as 
  $$s_{1}= y^2 \Bigl( 2(1- \tr) + \frac{  t^2 (\im z)^2}{q + 1 - \tr}\Bigr) - \gamma^2( 1 + y) ( (1- \tr)^2 + t^2 (\im z)^2)$$
  $$\geq (1 + y) \Bigl[(1- \tr)(y - \gamma^2(1 - \tr)) - \gamma^2 t^2 (\im z)^2 + \frac{y^2 t^2 (\im z)^2}{(1+y)(q + 1 - \tr)}\Bigr]$$
  $$\geq (1 + y)\Bigl[ d_{0} - \gamma^2 (\im z)^2\Bigr]$$
with 
$$d_{0} = (1 - \tr)(1 - \gamma^2(1 - \tr)) \geq (1- \tr)( 1- c_1^2 (1- \tr)) = G(\tr).$$
The function $G(\tr)$ has a maximum for $\tr  = 1 - \frac{1}{2 c_1^2} = c_3$ and $c_3 < 1- \delta/2.$
  Let $\min_{\tr \in [0, 1- \delta/2]} G(\tr)= d_1.$ If $c_1^2 \leq 1/2$, then $d_1 = \frac{\delta}{2} (1- c_1^2 \frac{\delta}{2}).$ For $0 < c_3 < 1- \delta/2$, we have 
  $$d_1 = \begin{cases} \frac{\delta}{2}( 1- c_1^2 \frac{\delta}{2}),\:{\rm if}\: \frac{1}{2} < c_1^2 \leq \frac{1}{1 + \delta/2},\\
  1- c_1^2,\: {\rm if}\: c_1^2 > \frac{1}{1 + \delta/2}. \end{cases}$$
For simplicity we use the crude estimate $d_0 \geq \frac{\delta}{2}( 1- c_1^2)$.

By (\ref{eq:4.1})
\begin{equation} \label{eq:4.15}
|\im z| \leq \frac{\sqrt{\delta}}{2 c_1 }\sqrt{ 1 - c_1^2} \leq \frac{1}{\gamma} \sqrt{\frac{d_0}{2}}
\end{equation}
 and we obtain
$$s(x', \xi'; z) \geq \frac{d_{0}}{4(yq)^{3/2}(\sqrt{y} + \sqrt{q})} \geq \frac{d_0}{4y^2q^{3/2}} \geq \frac{d_0}{8(1+ \delta^2)^{3/4}}.$$
 Notice that  for some values of $\gamma(x)$ the term
   $$Y= - \gamma^2   + \frac{y^2 }{(1+y)(q + 1 - \tr)}$$
   could be positive. On the other hand, 
   $$\frac{y^2 }{(1+y)(q + 1 - \tr)}   = \frac{1}{4(1-\tr) }+ {\mathcal O}(|\im z|)$$
   and for $\tr = 0, \: \im z = 0,$ the function $Y $ may take negative values. For this reason we simply estimate $Y$  from below by $-\gamma^2$.
   Finally,
   $$\re (F(\fh), \fh) \geq \Bigl(\frac{d_{0}}{8( 1+ \delta^2)^{3/4}}- A_{12} h\Bigr) \|\fh\|^2$$
   with $A_{12} > 0$ independent of $h$.
   Assuming $\fh \neq 0$ and taking together (\ref{eq:4.12}) and (\ref{eq:4.13}), for small $h$ we have
   $$|\im z| \leq B_{N}h^N,\: \forall N \in \N.$$ 
 This completes the proof of Theorem 4.1.
   
   
 \section{Properties of the operator $P(\th)$ for real $\th$}

In the section we use the notations of preceding sections.  Let $\th \in L, \: h = \re \th$. Introduce a function $\alpha(x', \xi') \in C^{\infty}(T^*(\Gamma):\: [0, 1])$ such that 
$$\alpha(x', \xi')= \begin{cases} 0,\:\: {\rm if}\: r_0  \geq  1-\frac{c_0^2 (1+ \ep_1)}{2},\\
1,\:\:{\rm if}\: r_0  \leq 1 -\frac{3 c_0^2}{5},\end{cases}$$
where $0 < \ep_1 \ll 1$ satisfies the inequality $5(1 +\ep_1) < 6.$ Let $\tilde{T}_N(\th)$  be the parametrix  in the hyperbolic region ${\mathscr H}$ constructed in subsection 3.2 for the problem (\ref{eq:3.14}) with boundary data $\Op_{\th}(\alpha) f.$ Recall that this parametrix is holomorphic for $\th \in L.$
 As in (\ref{eq:3.17}) we obtain
$$(\tilde{\nc}(\th) - \tilde{T}_N(\th))\Op_{\th}(\alpha)  = {\mathcal R}_N(h)\Op_{\th}(\alpha), \th \in L$$ 
with $\|{\mathcal R}_N\|_{L^2(\Gamma) \to H^1(\Gamma)} = \co(|h|^N).$ 
The operator $-\tilde{T}_N(h)\Op_h(\alpha) $ is self-adjoint. However for $h^2r_0 > 1$ the principal symbol $- \sqrt{ 1- h^2r_0}$ of $\tilde{T}_N(h)$ is not real valued (see the end of Section 3).
 To deal with a holomorphic in $\th$ operator, we add some additional terms. We fix $\delta = \frac{c_0^2}{2}$ and consider the function $\beta(t) \in C^{\infty}(\R; [0, 1])$ introduced in the previous section with this $\delta$. Recall that $\beta(r_0(x', \xi')) = 0$ for $r_0 \leq 1 - \frac{c_0^2}{2}$ and $\beta(r_0(x', \xi')) = 1$ for $r_0 \geq 1 -\frac{c_0^2}{4}.$ This choice of $\delta$ is {\bf not related to that in Section 4.} In fact, in the proof of Theorem 4.1 we have choose $\delta \ll c_0$ and we obtained an eigenvalue free region ${\mathcal Q}_N$ depending of $c_0$ and $c_1$ and small $h$.   The function $\beta$ and $\delta$ will be fixed and there are no confusion. Later in Section 6 we will use the functions $\chi_{\omega}^{\pm}, \chi_{\omega}^0$ with small parameter $\omega$ different from $\delta$ fixed here.

 Set $A(h) = \Op(1 - \beta(r_0(x', h\xi'))$. In the following $\Op(a)$ with symbol $a(x', h\xi'; h)$ denotes the classical pseudo-differential operator
$$({\rm Op}(a)f)(x) = (2\pi)^{d-1} \iint e^{\ii \la y' - x', \xi'\ra} a(x', h \xi'; h) f(y') dy'd\xi'$$ 
 and similarly if one has $\th$ at the place of $h$. Obviously, 
 \begin{equation} \label{eq:5.1} 
  \sup (1- \beta(r_0(x', \xi')))  \cap \supp \alpha(x', \xi') = \emptyset.
 \end{equation}
 Let
$$ Q(\th) = -\tilde{T}_N(\th)A(h) + \gamma(x) + (Id -A(h)) \Op(\sqrt{1 +\th^2 r_0}).$$

  To extend $Q(\th)$ for $\th \in L$ we must extend holomorphically for $\th \in L$ the operator $A(h)$.
  Since the symbol of this operator vanishes for large $\|\xi'\|$, we may apply Proposition A.1 in \cite{SjV}.  According to this result, for small $h_0$ and $\th \in L$ there exists an operator $R(\th)$ such that
 $$\pa_{\th} ^kR(\th) = \co(|\th|^{\infty}):\: H^{-s} \to H^s,\: \forall k \geq 0, \: \forall s \geq 0$$
 and $\tilde{A}_0(h) = A(h)+ R(h)$ extends holomorphically to the domain $L$. Moreover, the extension $\tilde{A}_0(\th)$ has the form  $\tilde{A}_0(\th)= B(h; \frac{\th}{|h|})$ with a classical pseudo-differential operator $B \in L^{0, 0}_{cl}(\Gamma)$ and $\rho \notin \widetilde{WF}(A(h))$ implies $\rho \notin \widetilde{WF} (\tilde{A}_0(\th)).$  After this manipulation we introduce the operator
$$P(\th) : = -\tilde{T}_N(\th) \tilde{A}_0(\th) + \gamma(x) +(Id-  \tilde{A}_0(\th))  \Op\Bigl(\sqrt{1 + \th^2 r_0}\Bigr)$$
 which is holomorphic for $\th \in L$. Notice that (\ref{eq:5.1}) implies
 $$\widetilde{WF}(Id - A(h)) \cap \widetilde{WF}(\Op(\alpha(x', h \xi')) = \emptyset,$$
 hence
 $$\widetilde{WF}(Id - \tilde{A}_0(\th)) \cap \widetilde{WF}(\Op(\alpha(x', h \xi')) = \emptyset.$$
  This leads to
 \begin{eqnarray} \label{eq:5.2}
 (C(\th) - P(\th)) \Op(\alpha)= -\Bigl(\tilde{\nc}(\th) -\tilde{T}_N(\th)\tilde{A}_0(\th)\Bigr ) \Op(\alpha)+
 \co(|h|^{\infty})\nonumber \\
 =  -\Bigl(\tilde{\nc}(\th) -\tilde{T}_N(\th)\Bigr ) \Op(\alpha) + \co(|h|^{\infty} ) = \co(|h|^N). \end{eqnarray}
 In this section we study the self-adjoint operator $P(h)$ for $\re \th = h$
  with principal symbol  
$$p_1(x', h \xi') : = -\sqrt{1 - h^2r_0} ( 1 - \beta(h^2 r_0))  + \gamma(x) +  \beta(h^2r_0) \sqrt{1 + h^2 r_0}.$$

  Choose  $ 0 < 2\eta = 1 - c_1^2 \leq 1 - 2\delta,\: \ep_0 = \frac{\eta^2}{8(1 + \eta).}$ For two operators $L_1, L_2$ we say that
 $L_1 \leq L_2$ if $(L_1 u, u) \leq (L_2 u, u).$ 
\begin {prop}
For small $h$  depending of $\ep_0$ we have the inequality
\begin{equation} \label{eq:5.3}
h \frac{d P(h)}{dh}  +   P(h) \Op((1 + h^2 r_0)^{-1/2}) P(h) \geq \ep_0 \Op(\sqrt{1 + h^2 r_0}).
\end{equation} 
\end{prop}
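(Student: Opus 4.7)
The strategy is to verify the inequality at the level of principal symbols with margin $2\epsilon_0$ and then invoke the sharp Gårding inequality to pass to the operator version, absorbing the resulting $O(h)$ error into the margin. Set $s := h^2 r_0(x',\xi')$. Since $\tilde{T}_N(h)\tilde{A}_0(h)$, multiplication by $\gamma$, and $(Id-\tilde{A}_0(h))\Op(\sqrt{1+\th^2 r_0})$ all lie in the classical $h$-pseudodifferential calculus, $P(h)$ is a classical $h$-pseudodifferential operator with principal symbol
\[
p_1 = -\sqrt{1-s}\,(1-\beta(s)) + \gamma(x) + \beta(s)\sqrt{1+s},
\]
and differentiation in $h$ preserves the calculus. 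The composition formula gives that $h\,\partial_h P(h)$ has principal symbol $2s\,dp_1/ds$ and $P(h)\Op((1+h^2 r_0)^{-1/2})P(h)$ has principal symbol $p_1^2/\sqrt{1+s}$. A direct computation yields
\[
\frac{dp_1}{ds} = \frac{1-\beta(s)}{2\sqrt{1-s}} + \beta'(s)\bigl(\sqrt{1-s}+\sqrt{1+s}\bigr) + \frac{\beta(s)}{2\sqrt{1+s}} \;\geq\; 0,
\]
so every summand in $2s\,dp_1/ds$ is non-negative.

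The key pointwise estimate to establish is
\[
\Phi(x',\xi';h) := 2s\,\frac{dp_1}{ds} + \frac{p_1^2}{\sqrt{1+s}} \;\geq\; 2\epsilon_0\sqrt{1+s}.
\]
I would treat three regimes. (i) \emph{Region $s\ge 1-c_0^2/4$}: here $\beta\equiv 1$, $p_1 = \gamma+\sqrt{1+s}\ge\sqrt{1+s}$, so $p_1^2/\sqrt{1+s}\ge\sqrt{1+s}$ and the inequality holds with huge margin. (ii) \emph{Region $s\le 1-c_0^2/2$}: here $\beta\equiv 0$ and $p_1 = \gamma-\sqrt{1-s}$. Using $1/\sqrt{1-s}\ge 1/\sqrt{1+s}$ and expanding,
\[
\Phi\sqrt{1+s} \;\geq\; s + (\gamma-\sqrt{1-s})^2 = 1+\gamma^2 - 2\gamma\sqrt{1-s},
\]
which, as a function of $a=\sqrt{1-s}\in[c_0/\sqrt{2},1]$, is minimised at $a=1$ giving $(1-\gamma)^2\ge (1-c_1)^2\ge \eta^2$ (since $1-c_1\ge(1-c_1^2)/(1+c_1)\ge\eta$). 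Thus $\Phi\sqrt{1+s}\ge\eta^2$, and the target $2\epsilon_0(1+s)\le 4\epsilon_0 = \eta^2/(2(1+\eta))\le\eta^2/2$ is dominated. (iii) \emph{Transition $1-c_0^2/2\le s\le 1-c_0^2/4$}: since the zero set $\{s=1-\gamma^2\}$ lies in $\{s\le 1-c_0^2\}$, in this layer one has $p_1\ge \gamma-\sqrt{c_0^2/2}\ge c_0(1-1/\sqrt{2})>0$, so $p_1^2/\sqrt{1+s}$ is bounded below by a positive constant depending only on $c_0$, which dominates $2\epsilon_0\sqrt{1+s}$.

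Combining the three regions yields $\Phi\geq 2\epsilon_0\sqrt{1+s}$ pointwise. An application of the sharp Gårding inequality (or just Gårding, since $\Phi-2\epsilon_0\sqrt{1+s}$ is a non-negative symbol in the semiclassical class) gives
\[
h\,\frac{dP(h)}{dh} + P(h)\Op\bigl((1+h^2 r_0)^{-1/2}\bigr)P(h) \;\geq\; \bigl(2\epsilon_0 - C h\bigr)\Op\bigl(\sqrt{1+h^2 r_0}\bigr),
\]
and for $h\le h_0$ small enough (in terms of $\epsilon_0$, hence of $\eta = (1-c_1^2)/2$) the $Ch$ error is absorbed, producing the claimed bound.

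\textbf{Main obstacle.} The principal-symbol verification is essentially algebraic, so the delicate point is the control of subprincipal terms. The composition $P\cdot\Op((1+h^2 r_0)^{-1/2})\cdot P$ generates subprincipal contributions of order $h$ through the Moyal expansion that involve derivatives of $p_1$ in $x'$ and $\xi'$; these derivatives blow up when $\beta'$ and $1/\sqrt{1-s}$ are both active, so one must check that the constants hidden in "$Ch$" remain uniform in the transition layer and in the regime $h=o(1-c_1^2)$, which forces the restriction $h = o((1-c_1^2))$ noted in the text. A similar care is needed to ensure that the remainder $\mathcal R_N(h)=O(h^N)$ from the parametrix does not spoil the bound after composition with the unbounded multiplier $\Op((1+h^2r_0)^{-1/2})\cdot P(h)$; choosing $N$ large handles this.
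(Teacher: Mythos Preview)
Your overall strategy---verify $\Phi \geq 2\epsilon_0\sqrt{1+s}$ at the symbol level, then apply sharp G{\aa}rding---is exactly the paper's, and your unified treatment of the region $s\le 1-c_0^2/2$ in case~(ii) is a neat consolidation of the paper's two separate sub-cases there. However, your case~(iii) contains a genuine gap. You bound $p_1\ge c_0(1-1/\sqrt2)$ and conclude that $p_1^2/\sqrt{1+s}$ is bounded below by a positive constant \emph{depending only on $c_0$}, which you then assert dominates $2\epsilon_0\sqrt{1+s}$. But $\epsilon_0=\eta^2/(8(1+\eta))$ with $\eta=(1-c_1^2)/2$ depends on $c_1$, not on $c_0$, and nothing forces $c_0^2(1-1/\sqrt2)^2$ to dominate $\epsilon_0$: take $c_0$ very small and $c_1$ bounded away from $1$ (say $c_0=10^{-2}$, $c_1=10^{-1}$), so that $\eta$ and hence $\epsilon_0$ are of order $1$ while your lower bound is $O(c_0^2)$. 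The inequality then fails.

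The paper avoids this by using the derivative term rather than $p_1^2$ in the transition layer. There $h\,dp_1/dh$ contains the summand
\[
s\Bigl(\frac{1-\beta(s)}{\sqrt{1-s}}+\frac{\beta(s)}{\sqrt{1+s}}\Bigr)\;\ge\;(1-\delta)\Bigl(\frac{1-\beta}{\sqrt\delta}+\frac{\beta}{\sqrt{2-\delta/2}}\Bigr)\;\ge\;\frac{1-\delta}{\sqrt{2-\delta/2}}\;\ge\;\frac{1-\delta}{2}\sqrt{1+s},
\]
which is of order $1$ independently of both $c_0$ and $c_1$, and hence certainly exceeds $2\epsilon_0$. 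Incidentally, your worry in the ``Main obstacle'' paragraph about blow-up of $1/\sqrt{1-s}$ on $\mathrm{supp}\,\beta'$ is unfounded: there $1-s\ge\delta/2=c_0^2/4$, so all symbol derivatives stay bounded (with constants depending on $c_0$), and the G{\aa}rding remainder is a genuine $O(h)$.
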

\begin{proof}
We have
$$h \frac{d p_1}{dh} = h^2 r_0 \Bigl( \frac{1 - \beta (h^2 r_0)}{\sqrt{1 - h^2r_0}} + \frac{\beta(h^2 r_0)}{\sqrt{1 + h^2 r_0}}\Bigr)$$
$$ + 2h^2 r_0 \beta'(h^2 r_0) \Bigl( \sqrt{1 - h^2r_0} +  \sqrt{1 + h^2 r_0}\Bigr)$$
and  one obtains $h \frac{d p_1}{dh} \geq 0, \: \forall (x', \xi').$\\

 We consider several cases.\\
{\bf 1}. $0 \leq h^2 r_0 \leq \eta.$ Then $\sqrt{1 - h^2r_0} \geq \sqrt{1 - \eta},\: \beta(h^2 r_0) = 0$ and
$$p_1 = - \sqrt{1 - h^2 r_0} + \gamma \leq -\sqrt{1 - \eta} + c_1 =-  \frac{\eta}{c_1 + \sqrt{1 - \eta}}.$$
This implies
\begin{equation}\label{eq:5.4}
h \frac{d p_1}{dh} + \frac{p_1^2}{\sqrt{1 + h^2 r_0}} \geq \frac{\eta^2}{(1 + \eta)(c_1 + \sqrt{1 -\eta})^2}\sqrt{1 + h^2 r_0} > 2\ep_0\sqrt{1 + h^2 r_0}.
\end{equation} 
{\bf 2}. $\eta < h^2 r_0 \leq 1 - \delta.$ Then $\beta(h^2 r_0) = 0$ and
\begin{equation} \label{eq:5.5}
h \frac{d p_1}{dh}  + \frac{p_1^2}{\sqrt{1 + h^2 r_0}} \geq \frac{\eta}{\sqrt{1 - h^2 r_0}} \geq \eta \sqrt{1+ h^2 r_0}.
\end{equation} 
{\bf 3}. $1- \delta < h^2 r_0 \leq 1 - \frac{\delta}{2}.$ 
We obtain
\begin{eqnarray} \label{eq:5.6}
h \frac{d p_1}{dh} + \frac{p_1^2}{\sqrt{1 + h^2 r_0}} \geq (1 - \delta)\Bigl( \frac{1 - \beta(h^2 r_0)}{\sqrt{\delta}} + \frac{\beta(h^2 r_0)}{\sqrt{2 - \delta/2}}\Bigr)\nonumber \\
 > \frac{1- \delta}{\sqrt{2 - \delta/2}} \geq \frac{1-\delta}{2- \delta/2}\sqrt{1 + h^2 r_0} > \frac{1 -\delta}{2}\sqrt{1 + h^2r_0}.
\end{eqnarray} 
{\bf 4}. $h^2 r_0 > 1 - \frac{\delta}{2}.$ Therefore, $\beta (h^2 r_0) = 1$ and 
$$ \frac{ p_1^2}{\sqrt{1 + h^2 r_0}} >  \sqrt{1 + h^2 r_0}.$$
Clearly, $\eta < \frac{1 - \delta}{2}$
and $2\ep_0 = \frac{\eta^2}{4(1 + \eta)} < \eta.$
Taking together the estimates (\ref{eq:5.4}), (\ref{eq:5.5}), (\ref{eq:5.6}), we get
    $$h\frac{ d p_1}{dh} + \frac{p_1^2}{\sqrt{1 + h^2 r_0}} - 2\ep_0 \sqrt{1 + h^2 r_0} \geq 0.$$
By the sharp semiclassical G\"arding inequality we deduce
$$\Bigl(h \Op(\frac{d p_1}{dh})f + \Op\Bigl (\frac{p_1^2}{\sqrt{1 + h^2 r_0}}\Bigr)f, f \Bigr)\geq 2\ep_0\Bigl( \Op(\sqrt{1 + h^2 r_0})f, f\Bigr) - C_1 h \|f\|_{H^{1/2}}^2.$$

The norm of the low order terms on the left hand side of (\ref{eq:5.3}) can be estimated by $C_2 h\|f\|^2,\: C_2 > 0$. Finally,
$h\Bigl(C_1 \|f\|^2_{H^{1/2}}+ C_2\|f\|^2 \Bigr)$ can be absorbed by $\ep_0\|f\|_{H^{1/2}}^2$ choosing $h$ small, and this completes the proof.
\end{proof}
\begin{rem} The choice of $\ep_0$ is independent of $\delta$ and the function $\beta(r_0)$ and $\ep_0 = o((1 - c_1)^2),$  so $\ep_0 \searrow 0$ as $c_1 \nearrow 1.$ We have a similar phenomenon in the case  $\gamma(x) > 1, \: \forall x \in \Gamma$ $($see Remark $2$ in \cite{P2}$)$, where $\ep_0 = o((1 - \tilde{c}_0)^2),\: \tilde{c}_0 = \min_{x \in \Gamma} \gamma(x) > 1.$ On the other hand,  $h$ depends of $\ep_0$ and we must take $h \leq h_0= o((1 - c_1)^2).$
\end{rem}

Let $0 < h \leq h_0 \ll 1$ and let
$$\mu_1(h) \leq \mu_2(h)\leq ...\leq \mu_k(h) \leq ...$$
be the eigenvalues of the self-adjoint operator $P(h).$ We repeat without changes the arguments of Section 4 in \cite{P2} and Section 4 in \cite{SjV}. For convenience of the reader, we mention briefly the main steps and we refer to \cite{P2}, \cite{SjV} for more details. The number of negative eigenvalues
$$k_0 = \sharp \{ k: \: \mu_k(h_0) \leq 0\} = (2 \pi h_0)^{-(d-1)} \iint_{p_1(x', \xi') \leq 0} dx'd\xi' + \co(h_0^{-d + 2})$$
is finite and 
\begin{equation} \label{eq:5.7}
{\mathcal E}  = \{ (x', \xi'): \:p_1(x', \xi') \leq 0\}  = \{(x', \xi'):\:  r_0(x', \xi') \leq 1 - \gamma^2(x')\}.
\end{equation}
In fact,
$p_1 \leq 0$ implies  
$$r_0 \leq 1 - \gamma^2 \leq 1- c_0^2.$$
On the other hand, $\beta(r_0) >  0$ yields $r_0 > 1- \delta = 1 - \frac{c_0^2}{2}$ and this
 contradicts the above inequality. Thus $(x', \xi') \in {\mathcal E}\: \iff \beta(r_0(x', \xi')) = 0$ and we obtain (\ref{eq:5.7}).

 Notice that  $k_0$ is independent of $\delta$ and the function $\beta$. For $k > k_0$ the eigenvalues $\mu_k(h_0)$ are positive. By applying 
(\ref{eq:5.3}), we show that $\mu_k(h)$ are locally Lipschitz functions and the derivatives $\frac{d \mu_k(h)}{dh}$ are almost defined. Repeating the argument in \cite{P2}, \cite{SjV},  for $\mu_k(h) \in [-\alpha, \alpha],\: 0 < \alpha \ll 1, h \in [h_1, h_2]$, one establishes the estimates
$$ \frac{\ep_0}{2} \leq h \frac{d \mu_k(h)}{dh} \leq c_2,\: k > k_0.$$

We discuss briefly only the proof of lower bound in  the above estimate.
   Let $h_1$ be small and let $\mu_k(h_1)$ have multiplicity $m$. For $h$ sufficiently close to $h_1$ one has exactly $m$ eigenvalues and we denote by $F(h)$ the space spanned by them. 
We denote by $\dot{a}(h)$ the derivatives of $a(h)$ with respect to $h$. 
  Let $h_2$ be close to $h_1$ and let $e(h_2)$ be a normalised eigenfunction with eigenvalue $\mu_k(h_2)$. We  construct a smooth extension $e(h) \in F(h),\: h \in [h_1, h_2]$ of $e(h_2)$ with $\|e(h)\| = 1,\:\dot{e}(h) \in F(h)^{\perp}$. Obviously, $e(h_1)$ will be normalised eigenfunction with eigenvalue $\mu_k(h_1).$   
  Since $\mu_k(h) \in [-\alpha, \alpha], \: h \in [h_1, h_2],$ we have $\|P(h) e(h)\| \leq \alpha.$    
      To estimate $h \frac{d \mu_k(h)}{d h}$ from below, we apply (\ref{eq:5.3}). For  $ \alpha = \sqrt{\frac{\ep}{2}}$ we have
$$h \frac{d \mu_k(h)}{d h} = ( h \dot{P} (h) e(h), e(h))  \geq \ep_0 (\Op(\sqrt{1 + h^2 r_0})e(h), e(h))  $$
$$- (\Op((1 + h^2 r_0)^{-1/2}) P(h) e(h), P(h) e(h) ) \geq \ep_0- \alpha^2 \geq \ep_0/2.$$
 Consequently, for $h \in [h_1, h_2]$ one has
$$\mu_k(h_2) - \mu_k(h_1) \geq \frac{\ep_0}{2}\int_{h_1}^{h_2} h^{-1}  dh \geq \frac{\ep_0}{2 h_2}   (h_2- h_1)$$
and 
$$\frac{\ep_0}{2} \leq h \frac{d \mu_k(h)}{dh} .$$
The above inequality combined with the continuity of $\mu_k(h)$ implies that  if for $ h < h_0, \: k > k_0$ we have $\mu_k(h) < 0$, then there exists unique $h < h_k < h_0$ such  that $\mu_k(h_k) = 0.$ Clearly, the operator $P(h_k)$ is not invertible. Thus we are led to count for $0 < \frac{1}{r} \leq h_0$ the number of the negative eigenvalues of $P(1/r)$ which can be expressed by well known formula.

Repeating without any change the argument in \cite{SjV}, we choose $p > d$ and construct intervals $I_{k, p}$ containing $h_k$ with length $|I_{k, p}| \sim h^{p +1} $ and $|\mu_k(h)| \geq h^p$ for $h \in (0, h_0] \setminus I_{k, p}.$ Next, one constructs closed disjoint intervals $J_{k, p}, \: |J_{k, p}| = {\mathcal O} (h^{p + 2 - d})$  so that 
$$\bigcup_{k > k_0} I_{k, p} = \bigcup_{k> k_0} J_{k, p}$$ 
and  we obtain the following
 \begin{prop} [Prop. 4.1, \cite{SjV}] Let $p > d$ be fixed. The inverse operator $P(h)^{-1} : L^2 \rightarrow L^2$ exists and has norm ${\mathcal O}(h^{-p})$ for $h \in (0, h_0] \setminus \Omega_p,$ where $\Omega_p = \cup_{k} J_{k, p}.$  Moreover, the number of intervals  $J_{k, p}$ that intersect $[h/2, h]$ for $0 < h \leq h_0$ is at most ${\mathcal O}(h^{1- p}).$ 
\end{prop}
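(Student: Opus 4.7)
The plan is to adapt the strategy of Proposition 4.1 in \cite{SjV} to the operator $P(h)$, leveraging the derivative sandwich $\varepsilon_0/2 \leq h\, d\mu_k/dh \leq c_2$ on $\{\mu_k \in [-\alpha,\alpha]\}$ established just before the statement. Since $P(h)$ is self-adjoint for $\th = h$, one has $\|P(h)^{-1}\|_{L^2 \to L^2} = (\min_k |\mu_k(h)|)^{-1}$, so the entire argument reduces to locating the $h$ at which some branch $\mu_k$ has modulus smaller than $h^p$ and then showing that these $h$ lie in a sparse union of intervals.

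First I would split the spectrum into three groups. The finite family $\mu_k$ with $k \leq k_0$ stays negative and bounded away from $0$ by the lower sandwich (the $\mu_k$ that are already $\leq 0$ at $h = h_0$ cannot revisit zero as $h$ decreases). The branches $\mu_k(h)$ with $k > k_0$ and $\mu_k(h) > \alpha$ are trivially bounded below by $\alpha \geq h^p$ for small $h$. The dangerous branches are exactly those $k > k_0$ for which $\mu_k$ enters $[-\alpha,\alpha]$; by the lower bound on $h\, d\mu_k/dh$ each such branch crosses zero at a unique point $h_k \in (0, h_0]$.

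Next I would construct the $I_{k, p}$. Integrating the sandwich between $h_k$ and $h$ yields, as long as $\mu_k(h)$ remains in the window,
\begin{equation*}
\frac{\varepsilon_0}{2h}\, |h - h_k| \leq |\mu_k(h)| \leq \frac{c_2}{h_k}\, |h - h_k|.
\end{equation*}
Taking $I_{k, p}$ centred at $h_k$ of half-length $(4/\varepsilon_0)\, h_k^{p+1}$ forces $|\mu_k(h)| \geq h^p$ on $(0, h_0] \setminus I_{k, p}$: the lower bound controls $h$ close to $h_k$, and further away $|\mu_k| \geq \alpha$ is automatic. Merging the $I_{k, p}$ into maximal closed intervals produces the disjoint consolidated family $J_{k, p}$, and one sets $\Omega_p := \bigcup_k J_{k, p}$. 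For $h \notin \Omega_p$ every eigenvalue satisfies $|\mu_k(h)| \geq h^p$, hence $\|P(h)^{-1}\|_{L^2 \to L^2} = \mathcal{O}(h^{-p})$.

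It remains to count the $J_{k, p}$ that meet $[h/2, h]$. Each such interval must contain at least one zero $h_k$ lying in the $\mathcal{O}(h^{p+1})$-enlargement of $[h/2, h]$; by monotonicity this corresponds to an eigenvalue of $P(\cdot)$ switching sign as the parameter crosses the enlargement. The total number of sign changes is dominated by the variation of the Weyl count of negative eigenvalues of $P(h')$ between $h' = h$ and $h' = h/2$, which is controlled by Section 10 of \cite{DS} applied to the classical symbol $p_1$; this yields the required $\mathcal{O}(h^{1 - p})$ bound (the Weyl count in fact gives a sharper $\mathcal{O}(h^{1 - d})$ estimate, which is stronger since $p > d$). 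The main technical obstacle I anticipate is ensuring that the Lipschitz regularity and the derivative sandwich persist across crossings of eigenvalue branches: the $\mu_k(h)$ are continuous but their labelling may jump, so the sign-change count must be formulated in terms of the unordered zero set; this is absorbed naturally by the consolidation step, following the bookkeeping of Section 4 in \cite{SjV}.
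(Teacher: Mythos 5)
Your argument is correct and is essentially the proof the paper intends, namely the scheme of Section 4 of \cite{SjV} that the text invokes verbatim: use the bound $h\,\dot\mu_k \geq \ep_0/2$ on the window to isolate each zero $h_k$ in an interval $I_{k,p}$ of length $\sim h_k^{p+1}$ off which $|\mu_k(h)| \gtrsim h^p$, merge overlapping $I_{k,p}$ into the disjoint $J_{k,p}$, and bound the number of $J_{k,p}$ meeting $[h/2,h]$ by the number of zeros $h_j$ in a slightly enlarged interval, which the semiclassical Weyl law of \cite{DS} controls by ${\mathcal O}(h^{1-d}) \subset {\mathcal O}(h^{1-p})$. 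The only blemishes are cosmetic (in the integrated lower bound the denominator should be $\max(h,h_k)$ rather than $h$, and the conclusion off $I_{k,p}$ is $|\mu_k|\geq c\,h^p$ up to a harmless constant), so the proposal matches the paper's route.
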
 

\section{Relation between the trace formulas for $P(\th)$ and $C(\th)$}

In this section we study $C(\th)$ and $P(\th)$ for complex $\th \in L$.
Moreover, $\delta = \frac{c_0^2}{2}$ and $\beta$ are fixed as in the preceding section. We obtain without changes many statements of Sections 5, 6 in \cite{SjV}. We refer to \cite{SjV} for the details and below precise citations are given.
First, one repeats the proof of Lemma 5.1 in \cite{SjV} exploiting Proposition 5.1. Thus we get
\begin{equation} \label{eq:6.1}
\|P(\th)^{-1} \|_{\lc(H^{-1/2}, H^{1/2})} \leq C \frac{ h}{|\Im \th|}, \: h > 0, \: \Im \th \neq 0.
\end{equation} 
Second, as in Section 5 in \cite{P2}, one introduces an elliptic operator $M(\th)$ holomorphic in $L$ such that
 $P(\th) - M(\th) : {\mathcal O}_s(1): H^{-s} \rightarrow H^s, \: \forall s.$ A modification is necessary since the principal symbol $p_1$ vanishes on the set $\Sigma = \{(x', \xi'):\: p_1(x', \xi') = 0\}$. For this purpose we repeat the argument of Section 5 in \cite{P2} and for convenience of the reader we present the proof.

 Consider a  symbol $\sigma(x', \xi') \in C^{\infty}_0 (T^*(\Gamma); [0, 1] )$ such that
$$\sigma(x', \xi) = \begin{cases} 1,\:\: (x', \xi') \in T^*(\Gamma),\: r_0(x', \xi') \leq 1-  \frac{11c_0^2}{12} ,\\
0,\:\: (x', \xi') \in T^*(\Gamma),\:r_0(x', \xi')  \geq 1- \frac{5c_0^2}{6}.\end{cases}$$
Introduce the operator 
$$M(h) = P( h) + \Op(\sigma(x', h\xi')). $$
The principal symbol of $M(h)$ has the form
$$m(x', h\xi') =   - \sqrt{1 -h^2 r_0} ( 1- \beta(h^2 r_0))+ \gamma(x) +  \beta(h^2 r_0) \sqrt{1 + h^2 r_0} + \sigma(x', h \xi').$$
The operator  $M(h)$ is elliptic since for $h^2r_0 \leq 1 - \frac{11c_0^2}{12}$ we have $ m \geq \gamma(x) \geq c_0,$ for  $1 - \frac{11 c_0^2}{12} < h^2 r_0 \leq 1 - \frac{ c_0^2}{2}$ we get $\beta(h^2 r_0) = 0$ and
$$m \geq \gamma(x) - \sqrt{1 - h^2r_0} \geq \gamma(x) - \sqrt{\frac{11}{12}}c_0,$$
 while for $ 1- \frac{c_0^2}{2} < h^2 r_0 \leq 1- \frac{c_0^2}{4}$ we obtain $m \geq \gamma(x) - \sqrt{1 - h^2r_0} \geq \gamma(x) - \sqrt{\frac{1}{2}}c_0.$ Finally, for $h^2 r_0 \geq 1 - \frac{c_0^2}{4}$ we have $\beta(h^2 r_0) = 1$ and $m \geq \gamma(x).$

Consequently, $m \in S^{1}_{0, 1}$, the operator $M^{-1} ( h): H^s - H^{s+1}$ is bounded by ${\mathcal O}_s(1)$
and $\widetilde{WF} (P(h) - M(h)) \subset \{(x', \xi'):\:r_0 \leq 1 - \frac{5c_0^2}{6}\} .$  By applying Proposition A.1 
in \cite{SjV}, we can extend homomorphically $\sigma(x', h \xi')$ to $\eta(x', \xi'; \th)$ for $\th \in L$. Thus
$M(h)$ has a holomorphic extension 
$$M(\th) =P(\th) + \Op(\eta(x',\xi'; \th))$$
 for $\th \in L$ and $\widetilde{WF} (P(\th) - M(\th)) \subset \{(x', \xi'): \:r_0  \leq1- \frac{5c_0^2}{6}\} .$ The last relation implies $P(\th) - M(\th) : {\mathcal O}_s(1): H^{-s} \rightarrow H^s, \: \forall s.$

Next, one deduces the estimate
\begin{equation}\label{eq:6.2} 
\|P(\th)^{-1} \|_{\lc(H^s, H^{s+1})} \leq C_s \frac{h}{|\Im \th|}, \: h > 0, \: \Im \th \neq 0
\end{equation}
applying (\ref{eq:6.1}) and the representation
\begin{equation} \label{eq:6.3}
P^{-1} = M^{-1} - M^{-1} (P- M)M^{-1} + M^{-1}(P - M) P^{-1} (P- M) M^{-1},
\end{equation} 
combined with the property of $P(\th) - M(\th)$ mentioned above. 
Following \cite{SjV}, introduce a piecewise smooth simply positively oriented curve $\gamma_{k.p}$ as a union of four segments: $\{ h \in J_{k, p},\: \Im \th = \pm  h^{p+1}\}$ and $\{h \in \pa J_{k, p},\: |\Im \th|\leq h^{p+ 1}\},$ where $J_{k, p}$ is one of the intervals in $\Omega_p$ defined in Proposition 5.2. Then  we have
\begin{prop}[Prop. 5.2, \cite{SjV}] For every $h \in \gamma_{k, p}$ the inverse operator $P(\th)^{-1}$ exists and
$$\|P(\th)^{-1} \|_{\lc(H^s, H^{s+1})} \leq C_s h^{-p},\: \th \in \gamma_{k, p}.$$
\end{prop}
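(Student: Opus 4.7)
The plan is to bound $P(\th)^{-1}$ separately on each of the four smooth pieces of $\gamma_{k,p}$, combining the pointwise estimate (\ref{eq:6.2}) on the two horizontal segments with a perturbative extension of Proposition 5.2 off the real axis on the two vertical segments, and then, where necessary, upgrading $L^2\to L^2$ bounds to the full $\lc(H^s,H^{s+1})$ bounds via the resolvent identity (\ref{eq:6.3}).

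On the two horizontal segments, where $|\Im \th|=h^{p+1}$ and $h\in J_{k,p}$, estimate (\ref{eq:6.2}) directly yields
$$\|P(\th)^{-1}\|_{\lc(H^s,H^{s+1})} \leq C_s\,\frac{h}{|\Im\th|} = C_s\, h^{-p},$$
which is exactly the desired bound. The more delicate task is the two vertical segments, where $\re\th = h$ is an endpoint of $J_{k,p}$ and $|\Im\th|$ may vanish, so (\ref{eq:6.2}) gives no information. However, $h\in\partial J_{k,p}\subset (0,h_0]\setminus\Omega_p$, so Proposition 5.2 furnishes $\|P(h)^{-1}\|_{\lc(L^2,L^2)}\leq C\,h^{-p}$. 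To extend this to $|\Im\th|\leq h^{p+1}$, I would exploit the holomorphy of $P(\th)$ on $L$ and write
$$P(\th) = P(h) + (\th-h)\,R(\th),\qquad R(\th)=\int_0^1 \dot{P}\bigl(h+s(\th-h)\bigr)\,ds.$$
The operator $\dot P(\cdot)$ is uniformly bounded $L^2\to L^2$ on a small neighborhood of $h$ in $L$, since $\tilde T_N(\th)$, $\tilde A_0(\th)$ and $\Op(\sqrt{1+\th^2 r_0})$ are each constructed as holomorphic families of semi-classical pseudo-differential operators in symbol classes whose norms depend polynomially on $\th$. Because $|\th-h|\leq h^{p+1}$ and $\|P(h)^{-1}\|=\mathcal{O}(h^{-p})$, the product $P(h)^{-1}(\th-h)R(\th)$ has $L^2\to L^2$ norm $\mathcal{O}(h)\ll 1$ for $h_0$ small, and a Neumann series gives
$$\|P(\th)^{-1}\|_{\lc(L^2,L^2)} \leq 2C\,h^{-p}\qquad\text{on the vertical segments}.$$

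To conclude with the full $H^s\to H^{s+1}$ statement on all of $\gamma_{k,p}$, I would apply (\ref{eq:6.3}) once more: since $M(\th)^{-1}:H^s\to H^{s+1}$ is uniformly bounded for $\th\in L$ (the principal symbol $m$ is elliptic and $M(\th)$ is holomorphic) and $P(\th)-M(\th)=\mathcal{O}_s(1):H^{-s}\to H^s$ by the wavefront analysis in Section 6, the first two terms in (\ref{eq:6.3}) are $\mathcal{O}_s(1)$ and the last term inherits the factor $h^{-p}$ from the middle $P(\th)^{-1}$, which is controlled either by (\ref{eq:6.2}) (horizontal pieces) or by the Neumann series argument above (vertical pieces). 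The main obstacle is ensuring that $\dot{P}(\th)$ has bounded $L^2$-operator norm on a full disk of radius $\sim h^{p+1}$ around $h$, uniformly in $h$; this is where the explicit holomorphic construction of $\tilde A_0(\th)$ via Proposition A.1 of \cite{SjV} and the symbolic form of $\tilde T_N(\th)$ from subsection 3.2 are crucial, because they guarantee that differentiation in $\th$ preserves the pseudo-differential class and keeps all remainders $\mathcal{O}(|\th|^\infty)$ in every operator norm.
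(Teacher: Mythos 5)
Your overall architecture — the bound (\ref{eq:6.2}) on the two curved pieces where $|\Im\th|=h^{p+1}$, the real-endpoint bound from Proposition 5.2 plus a holomorphic perturbation on the two vertical pieces, and the upgrade to $\lc(H^s,H^{s+1})$ through $M(\th)$ and the identity (\ref{eq:6.3}) — is exactly the intended argument (the paper gives no proof of its own and simply cites Prop.~5.2 of \cite{SjV}, whose proof runs along these lines). However, the step you yourself single out as the crux is wrong as stated: $\dot{P}(\th)$ is \emph{not} uniformly bounded on $L^2$. The term $(Id-\tilde{A}_0(\th))\,\Op\bigl(\sqrt{1+\th^2 r_0}\bigr)$ is a genuine first-order operator (the cut-off $Id-\tilde A_0$ is the identity at large frequencies, it does not localize), and $\partial_\th\sqrt{1+\th^2 r_0(x',\xi')}=\th r_0/\sqrt{1+\th^2 r_0}$ grows like $|\xi'|$; the same is true of the difference $P(\th)-P(h)$, so $R(\th)$ in your Taylor formula is unbounded on $L^2$ and the Neumann series in $\lc(L^2,L^2)$ does not make sense as written. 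The correct uniform statement is $\dot{P}(\th)=\co(1):H^{s+1}\to H^{s}$ (its symbol is $\co(\la\xi'\ra)$ uniformly for $\th\in L$), not $\co(1):L^2\to L^2$.

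The repair is routine but changes the order of your steps: at a real endpoint $h\in\pa J_{k,p}$ first upgrade the spectral bound $\|P(h)^{-1}\|_{L^2\to L^2}=\co(h^{-p})$ to $\|P(h)^{-1}\|_{\lc(H^s,H^{s+1})}\leq C_s h^{-p}$ using (\ref{eq:6.3}) (the middle factor $P^{-1}$ there is sandwiched between $P-M:\: H^{-s}\to H^{s}$, so the $L^2$ bound suffices); then run the Neumann series in $\lc(H^{s+1},H^{s+1})$, where $\|P(h)^{-1}(P(\th)-P(h))\|\leq C_s h^{-p}\cdot C|\th-h|\leq C'h\ll 1$ since $|\th-h|\leq h^{p+1}$, obtaining directly $\|P(\th)^{-1}\|_{\lc(H^s,H^{s+1})}\leq 2C_s h^{-p}$ on the vertical pieces. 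Two further small points: the endpoints $\pa J_{k,p}$ lie in $\Omega_p$ (the $J_{k,p}$ are closed), so Proposition 5.2 does not literally apply there; one gets $|\mu_j(h)|\geq h^p$ at the endpoints by approximating from the complement of $\Omega_p$ and using the continuity (Lipschitz property) of the $\mu_j$ established in Section 5. Your treatment of the pieces $\Im\th=\pm h^{p+1}$ via (\ref{eq:6.2}) is correct, since $h^{p+1}\leq C_0|\th|^2$ keeps $\th$ in $L$.
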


For the operator $P(\th)$, we obtain a trace formula repeating without any change the proof in \cite{SjV}. Let $\mu_k(h_k) = 0, \: k > k_0.$
 Define the multiplicity of $h_k$ as the  multiplicity of the eigenvalue $\mu_k(h_k)$ and denote the derivate of $A$ with respect to $\th$ by $\dot{A}.$ 
\begin{prop} [Prop. 5.3, \cite{SjV}] Let $\tau  \subset L$ be a closed positively oriented $C^1$ curve without self intersections which avoids the points $h_k$ with $\mu_k(h_k) = 0.$ Then 
  $${\rm tr} \frac{1}{2 \pi \ii}  \int_{\tau} P(\th)^{-1} \dot{P}(\th) d\th$$
  is equal to the number of $h_k$ in the domain bounded by $\tau.$   
\end{prop}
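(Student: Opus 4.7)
The plan is to recast the trace integral as the integral of the logarithmic derivative of a scalar holomorphic Fredholm determinant and then to apply the argument principle. Using the decomposition from Section~5, write $P(\th) = M(\th) + K(\th)$, where $M(\th)$ is the elliptic, invertible, holomorphic family constructed above on $L$ and $K(\th) = -\Op(\eta(x',\xi';\th))$ is smoothing, $K(\th):H^{-s}\to H^s$ for every $s\ge 0$. Then $A(\th) := M(\th)^{-1}K(\th)$ is trace class on $L^2(\Gamma)$, depends holomorphically on $\th \in L$, and $P(\th) = M(\th)(I + A(\th))$. Consequently $P(\th)$ is invertible if and only if the scalar Fredholm determinant $D(\th) := \det(I + A(\th))$ is non-zero; $D$ is holomorphic on $L$, and its zeros inside $\tau$ are exactly the points $h_k$ with $\mu_k(h_k)=0$ lying in ${\rm int}(\tau)$.

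The next step is to match the order of vanishing of $D$ at each such $h_k$ with ${\rm mult}(h_k)$. For real $h$ close to $h_k$ the operator $P(h)$ is self-adjoint, and Proposition~5.1 yields $h\,\dot\mu_j(h_k) \geq \ep_0/2 > 0$ for every eigenvalue branch $\mu_j(h)$ vanishing at $h_k$; each such branch therefore has a simple zero. Applying analytic perturbation theory to the holomorphic extension $P(\th)$ on a small complex neighbourhood of $h_k$, one concludes that $D(\th)$ has a zero at $h_k$ of order equal to the number of vanishing branches, which coincides with the geometric multiplicity of $0$ as eigenvalue of $P(h_k)$, namely ${\rm mult}(h_k)$.

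To link the trace integral to $D'/D$, differentiate the factorisation $P = M(I+A)$:
\[P(\th)^{-1}\dot P(\th) = (I+A)^{-1}M^{-1}\dot M\,(I+A) + (I+A)^{-1}\dot A.\]
The second summand is trace class, with trace $D'(\th)/D(\th)$ by the classical Fredholm-determinant formula. The first summand rewrites as $M^{-1}\dot M + (I+A)^{-1}[M^{-1}\dot M,A]$, in which the commutator is trace class with vanishing trace by cyclicity, while the holomorphic background $M^{-1}\dot M$ makes no net contribution to the closed-contour integral over $\tau$. The argument principle then produces
\[{\rm tr}\,\frac{1}{2\pi\ii}\int_\tau P(\th)^{-1}\dot P(\th)\,d\th = \frac{1}{2\pi\ii}\int_\tau\frac{D'(\th)}{D(\th)}\,d\th = \sum_{h_k\in{\rm int}(\tau)}{\rm mult}(h_k),\]
as required.

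The main obstacle is the rigorous handling of the non-trace-class background $M^{-1}\dot M$ in the last step, since the classical trace-determinant identity holds only for trace-class perturbations of the identity. The cleanest way to bypass this is to work with $D(\th)$ from the outset and to localise near each $h_k$ by a Riesz projector $\Pi_k(\th) = (2\pi\ii)^{-1}\oint(z-P(\th))^{-1}\,dz$, which reduces the computation there to a finite-dimensional holomorphic matrix where the trace--determinant identity is elementary and gives an ${\rm mult}(h_k)$ contribution; away from small discs around the $h_k$, Cauchy's theorem for trace-class valued holomorphic operator functions makes the remaining integral vanish.
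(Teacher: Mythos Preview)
Your argument is correct and follows the same Fredholm-determinant route that \cite{SjV} uses (the paper does not give an independent proof and simply cites \cite{SjV}, Proposition~5.3). Two small comments. First, your penultimate paragraph already settles the issue you call an ``obstacle'' in the last paragraph: the term $(I+A)^{-1}[M^{-1}\dot M,A]$ is trace class and its trace really is zero, because $A$ and $(I+A)^{-1}$ commute, so cyclicity gives $\mathrm{tr}\bigl((I+A)^{-1}M^{-1}\dot M\,A\bigr)=\mathrm{tr}\bigl(A(I+A)^{-1}M^{-1}\dot M\bigr)=\mathrm{tr}\bigl((I+A)^{-1}A\,M^{-1}\dot M\bigr)$; the Riesz-projector detour is therefore unnecessary. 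Second, the meaning of the left-hand side is that $\int_\tau P(\th)^{-1}\dot P(\th)\,d\th$ is itself trace class: your decomposition shows $P^{-1}\dot P = M^{-1}\dot M + (\text{trace class})$, and the holomorphic background $M^{-1}\dot M$ integrates to zero as an \emph{operator} over the closed contour $\tau$ by Cauchy's theorem, so what remains is trace class. With this reading, the identity with $\int_\tau D'/D$ is immediate. Your multiplicity count is also correct: by (\ref{eq:6.1}) the only non-invertible points of $P(\th)$ in $L$ are the real $h_k$, and since each eigenvalue branch $\mu_j$ through $0$ at $h_k$ has $\dot\mu_j(h_k)>0$ by the discussion after Proposition~\ref{eq:5.3}, the finite-dimensional determinant on the Riesz range vanishes to order exactly $\dim\ker P(h_k)$, which matches the order of the zero of $D$.
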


Introduce an operator $\chi \in L^{0, 0}_{cl}(\Gamma)$  which is holomorphic for $\th \in L$ so that 
 $$\widetilde{WF}(\chi) \subset \{(x', \xi') \in T^*(\Gamma):\: r_0(x', \xi') \leq 1 - \frac{3c_0^2}{4}\},$$
$$\widetilde{WF}(1-\chi)  \subset \{(x', \xi') \in T^*(\Gamma):\:r_0(x', \xi')  \geq 1-\frac{4c_0^2}{5}\}.$$ 
 We apply (\ref{eq:6.3}) for $P^{-1}(1 - \chi)$, and exploiting
$$\widetilde{WF}(P(\th) - M(\th)) \cap \widetilde{WF}(1 - \chi) = \emptyset,$$
 we get
 \begin{equation} \label{eq:6.4}
P^{-1} = P^{-1} \chi + M^{-1} ( 1- \chi) + K_1,\: \th \in \gamma_{k, p}
\end{equation}
with $K_1: \co(|\th|^s): H^{-s} \to H^s, \: \forall s.$ For the analysis below consider a product $\chi_1 P^{-1} \chi_2,$ where $\chi_1, \chi_2 \in L^{0, 0}_{cl}$ are such that
$\widetilde{WF}(\chi_1) \cap \widetilde{WF} (\chi_2) = \emptyset,$  and  $\widetilde{WF}(\chi_1) $ or $\widetilde{WF}(\chi_2)$ is disjoint from $\{(x', \xi'):\: r_0 \leq 1 - \frac{5 c_0^ 2}{6} \}.$ Applying once more (\ref{eq:6.3}), we deduce $\chi_1 P^{-1}\chi_2 : \co(|h|^s): \: H^{-s}\to H^s,\: \forall s.$
   
       We pass to the analysis of the inverse of the operator $C(\th)$. First choosing small $\omega > 0$, introduce a partition of unity on $T^*(\Gamma)$
       $$1 = \chi^{-}_{\omega} + \chi^0_{\omega} + \chi^{+}_{\omega},$$
where the functions $\chi^{j}_{\omega}(x',\xi') ,\: j = 0, \pm$ have been introduced in Section 4. We replace $\delta$ by $\omega$ to avoid a confusion with $\delta = \frac{c_0^2}{2}$ fixed above and choose $\omega \ll c_0^2$.  Let $Q^{j}_{\omega} = \Op(\chi^{j}_{\omega}(x', h\xi')),\: j = 0, \pm.$ The estimates (\ref{eq:4.5}) and (\ref{eq:4.11}) can be written as estimates for $C(\th) = -\tilde{\nc}(\th)+ \gamma(x),$ because
$$\frac{\nc(h, z)}{\sqrt{z}} = \frac{- \ii h \pa_{\nu} u\vert_{\Gamma}}{\sqrt{z}}= -\ii \th \pa_{\nu} u \vert_{\Gamma}= \tilde{\nc}(\th).$$
Notice that for the proof of (\ref{eq:4.11})  we must take $\omega \ll c_0^2$ sufficiently small.
From these estimates we deduce
$$\|(1 -Q^{-}_{\omega}) f\|_{H^{1/2}} \leq \|Q^{0}_{\omega} f\|_{H^{1/2}} + \|Q^{+}_{\omega}  f \|_{H^{1/2}} $$
$$\leq B\Bigl(\|C(\th) Q^{0}_{\omega} f\|_{H^{-1/2}}  +\|C(\th) Q^{+}_{\omega} f\|_{H^{-1/2}}\Bigr)$$
with constant $B > 0$ depending of $\omega$ and small $h$. We fix $\omega \leq \frac{c_0^2}{12}$ and obtain
$$\|f\|_{H^{1/2}} \leq B \Bigl( \|C(\th) f\|_{H^{-1/2}} + 2\|C(\th) Q^{+}_{\omega} f\|_{H^{-1/2}} $$
$$+ \|C(\th) Q^{-}_{\omega} f\|_{H^{-1/2}}\Bigr) + \|Q^{-}_{\omega} f\|_{H^{1/2}}.$$
By using a parametrix $S_N(h)$ in the elliptic region, we have $C(\th)Q^{+}_{\omega} = S_N(h)Q^{+}_{\omega}  + {\mathcal R_N}, \: {\mathcal R_N} = \co(|\th|^N)$. Then
$$C(\th) Q^{+}_{\omega}  = Q^{+}_{\omega} C(\th) + [S_N(h), Q^{+}_{\omega}] + \co(|\th|^N)$$ 
and it easy to estimate the norm of the commutator $[S_N(h), Q^{+}_{\omega} ]$ by $\co(|\th|).$ Also by using the parametrix $\tilde{T}_N(\th)$ in the hyperbolic region, we obtain the same result for the commutator $[\tilde{T}_N(\th), Q^{-}_{\omega}].$ The terms with norms $\co(|\th|)$ can be absorbed by the left hand side of the last inequality and we give
\begin{equation} \label{eq:6.5}
\|f\|_{H^{1/2}} \leq B_1 \Bigl( \|C(\th) f\|_{H^{-1/2}} + \|Q^{-}_{\omega} f\|_{H^{1/2}}\Bigr).
\end{equation} 
 
Introduce the operator  $\tilde{C}(\th): = C(\th) +  \tilde{\chi}$, where  $\tilde{\chi} \in L^{0,0}_{cl}$ depends homomorphically of $\th \in L$ and $\widetilde{WF}(\tilde{\chi})$ is included in  a neighborhood  of $\Sigma.$ To do this, choose $\zeta \in C_0^{\infty}(T^*(\Gamma); [0, 1])$, so that
$$\zeta(x', \xi') = \begin{cases} 1,\:{\rm if}\: r_0(x', \xi') \leq 1 - \frac{6 c_0^2}{7},\\
0,\: {\rm if}\: r_0(x', \xi')  \geq 1 - \frac{5 c_0^2}{6}.\end{cases}$$
Extend $\zeta$ homomorphically  to $\tilde{\zeta}$ for $\th \in L$ as we have proceeded above with $\eta$, and define the operator $\tilde{\chi}$ with symbol $\tilde{\zeta}.$ Obviously,
$$\widetilde{WF}(\tilde{\chi}) \subset \big\{(x', \xi') \in T^*(\Gamma):\: r_0 \leq 1 - \frac{5c_0^2}{6}\big\},$$
Moreover, it is easy to see that $\tilde{C}(\th)$ is elliptic operator.
Indeed, choose a symbol $\eta_1 \in C_0^{\infty}( T^* (\Gamma); [0, 1])$ satisfying
$$\eta_1(x', \xi') = \begin{cases} 1,\:{\rm if}\: r_0(x', \xi') \leq 1 - \frac{\omega}{2},\\
0,\: {\rm if}\: r_0(x', \xi')  \geq 1 - \frac{\omega}{3}.\end{cases}$$
and set $\upsilon = \Op_h(\eta_1).$
We  apply (\ref{eq:6.5}) for $(1- \upsilon) f$ and  deduce
\begin{equation} \label{eq:6.6}
\|(1 - \upsilon) f\|_{H^{1/2}} \leq B_1\Bigl( \|(1 - \upsilon)C(\th) f\|_{H^{-1/2}} + \|[C(\th) , \upsilon] f\|_{H^{-1/2}} + \co(|\th|^{\infty}) \|f\|_{H^{1/2}}\Bigr).
\end{equation} 
Since $\widetilde{WF}(1 - \upsilon) \cap \widetilde{WF}(\tilde{\chi}) = \emptyset,$ we have $(1- \upsilon) C(\th) = (1- \upsilon) \tilde{C}(\th) + \co(|\th|^{\infty})$  and we may replace
$(1 - \upsilon) C(\th)$ by $(1 - \upsilon)\tilde{C}(\th)$ in the last inequality.
On the other hand,  $\tilde{C}(\th)$ is elliptic in the hyperbolic region and similarly to (\ref{eq:4.5}) we give
\begin{equation} \label{eq:6.7} 
\|\upsilon f\|_{H^{1/2}} \leq B_2 \Bigl(\|\tilde{C}(\th) \upsilon f\|_{H^{-1/2}} + A h \|f\|_{H^{1/2}}\Bigr ).
\end{equation} 
The commutator $[C(\th), \upsilon]$ yields a term with norm $\co(|h|)$ and taking together (\ref{eq:6.6}) and (\ref{eq:6.7}), we conclude that
\begin{equation} \label{eq:6.8} 
\|f\|_{H^{1/2}} \leq B_3 \|\tilde{C}(\th) f\|_{H^{-1/2}},\: \th \in L.
\end{equation} 
This proves that $\tilde{C}(\th)$ is invertible with inverse $\tilde{C}^{-1}(\th)$  holomorphic for $\th \in L$.\\

Passing to $C^{-1}(\th)$, consider the operator
$$D = \tilde{C}^{-1}(\th) (1 - \chi) + P^{-1}(\th) \chi, \: \th \in \gamma_{k, p}.$$
We have
$$C D = I + (C - \tilde{C})\tilde{C}^{-1} ( 1- \chi) + (C - P) P^{-1} \chi.$$
Let $\tilde{\alpha} = \Op(\alpha) \in L^{0, 0}_{cl}$ with symbol $\alpha(x', h\xi')$ be the operator introduced in the preceding section. Clearly,
$$\widetilde{WF}(\tilde{\alpha}) \subset \big\{(x', \xi'):\:r_0 \leq 1 -\frac{c_0^2( 1 + \ep_1)}{2}\big\},\: \widetilde{WF}(1- \tilde{\alpha}) \subset \big\{ (x', \xi'): \:r_0 \geq 1 - \frac{3c_0^2}{5}\big\}.$$
Then $(C - P)(1 - \tilde{\alpha}) P^{-1} \chi: \: \co(|\th|^s): \: H^{-s} \to H^s.$
On the other hand,  according to  (\ref{eq:5.2}), one has
\begin{equation} \label{eq:6.9}
(C- P)\tilde{\alpha}=  {\mathcal R}_N \tilde{\alpha}
\end{equation}
 with ${\mathcal R}_N: \co_s(|\th|^N) : H^s \to H^s$. 
 Applying (\ref{eq:6.2}),we conclude that 
 $$(C - P) P^{-1} \chi = \co_s(|\th|^{N- p}), \: H^{-s} \rightarrow H^s,\:\th \in \gamma_{k, p}.$$
 
Next $C - \tilde{C} = - \tilde{\chi}$ and 
 $\widetilde{WF}(\tilde{\chi}) \cap \widetilde{WF}(1 - \chi) = \emptyset$ implies $\tilde{\chi} \tilde{C}^{-1}(1 - \chi) = \co(|\th|^s): \: H^{-s} \to H^s.$ Finally,
$CD = Id + \co(|\th|^{N- p})$ and we can take $N$ arbitrary large. We obtain an inverse $D( Id + \co(|\th|^{N- p}))^{-1}$ of $C(\th)$  and this implies the following
\begin{prop} For $\th \in \gamma_{k, p}$ we have
\begin{equation} \label{eq:6.10}
\|C^{-1}(\th) \|_{\lc(H^s, H^{s+1})} \leq C_s |\th|^{-p}.
\end{equation} 
\end{prop}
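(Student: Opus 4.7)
The plan is to finish by inverting the near-identity relation $C(\th) D = Id + K(\th)$ already established in the lines preceding the statement, where $D = \tilde{C}^{-1}(\th)(1-\chi) + P^{-1}(\th)\chi$ and $K(\th) = \co_s(|\th|^{N-p}) : H^{-s} \to H^s$ with $N$ at our disposal. The preceding paragraph has done all the nontrivial microlocal work (using (\ref{eq:6.9}) to eliminate $(C-P)\tilde{\alpha}P^{-1}\chi$ and the wavefront-disjointness argument to handle $(C-P)(1-\tilde{\alpha})P^{-1}\chi$ and $\tilde{\chi}\tilde{C}^{-1}(1-\chi)$), so what remains is essentially a Neumann series estimate together with tracking norms.

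First, fix $N$ so large that $N - p \geq 1$, and shrink $h_0$ so that $\|K(\th)\|_{\lc(H^s, H^s)} \leq 1/2$ uniformly for $\th \in L$, $|\th| \leq h_0$. Then $Id + K(\th)$ is invertible on $H^s$ by geometric series with $\|(Id + K(\th))^{-1}\|_{\lc(H^s,H^s)} \leq 2$, so $D\cdot (Id + K(\th))^{-1}$ is a right inverse of $C(\th)$. For genuine invertibility, I would either (i) repeat the construction symmetrically, forming $\tilde{D} = (1-\chi)\tilde{C}^{-1}(\th) + \chi P^{-1}(\th)$ and verifying $\tilde{D} C(\th) = Id + K'(\th)$ with $K'$ of the same type, using (\ref{eq:6.9}) again and the generic $\co_s(|\th|^s)$ bound on products $\chi_1 P^{-1} \chi_2$ with disjoint wavefronts; or (ii) observe that $\tilde\chi$ has order $0$ with symbol of compact $\xi'$-support on the compact manifold $\Gamma$ and is therefore compact from $H^{1/2}$ to $H^{-1/2}$, so that $C(\th) = \tilde{C}(\th) - \tilde{\chi}$ is a compact perturbation of the invertible operator $\tilde{C}(\th)$, hence Fredholm of index zero, and the existence of a right inverse forces two-sided invertibility.

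For the advertised norm bound, I would write $C^{-1}(\th) = D(Id + K(\th))^{-1}$ and estimate the two summands of $D$ separately as maps $H^s \to H^{s+1}$. The term $\tilde{C}^{-1}(\th)(1-\chi)$ is holomorphic in $L$ by (\ref{eq:6.8}), hence uniformly bounded from $H^s$ to $H^{s+1}$ on the compact closure of $\gamma_{k,p}$ (bounded by an $s$-dependent constant). The term $P^{-1}(\th)\chi$ is bounded by $C_s h^{-p}$ on $\gamma_{k,p}$ by Proposition 5.2. On $\gamma_{k,p}$ one has $\Re \th = h$ and $|\Im \th| \leq h^{p+1}$, so $|\th| = h(1 + O(h^{2p}))$ and in particular $h^{-p}$ and $|\th|^{-p}$ are comparable. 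This gives $\|D\|_{\lc(H^s, H^{s+1})} \leq C_s' |\th|^{-p}$, and multiplication by $(Id + K(\th))^{-1}$ of norm $\leq 2$ yields (\ref{eq:6.10}).

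The only delicate point is making sure the Neumann series is valid uniformly in $k$ and $p$: this is why one exploits the freedom in $N$. The factor $|\th|^{-p}$ introduced by $P^{-1}$ on $\gamma_{k,p}$ is compensated by the $|\th|^N$ decay in (\ref{eq:6.9}) and in the wavefront-disjoint residuals, so the leftover error is genuinely $\co_s(|\th|^{N-p})$ with $N-p$ as large as we wish; the choice of $h_0$ then depends only on $s$ (and on fixed geometric data), not on which curve $\gamma_{k,p}$ we land on.
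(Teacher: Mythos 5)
Your argument is correct and follows essentially the same route as the paper: the paper likewise obtains the inverse as $D\bigl(Id + \co(|\th|^{N-p})\bigr)^{-1}$ with $D=\tilde{C}^{-1}(\th)(1-\chi)+P^{-1}(\th)\chi$, bounds $P^{-1}(\th)\chi$ by Proposition 5.2 and $\tilde{C}^{-1}(\th)(1-\chi)$ via its uniform invertibility on $L$, and takes $N$ large. Your Fredholm/left-parametrix remark merely makes explicit the two-sided invertibility that the paper leaves implicit.
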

Similarly to (\ref{eq:6.4}), we get
\begin{equation} \label{eq:6.11}
C^{-1}(\th) = \tilde{C}^{-1}(\th) (1- \chi) + P^{-1}(\th) \chi + K_2(\th),\: \th \in \gamma_{k,p}
\end{equation} 
with $K_2(\th): \:\co_s(|\th|^{N - p}): \: H^{-s} \rightarrow H^s.$\\

Now we are going to compare the traces involving $C(\th)$ and $P(\th)$ and our argument is very similar to that in \cite{SjV}.  First, using (\ref{eq:6.11}), we have
$${\rm tr} \frac{1}{2 \pi \ii}\: \int_{\gamma_{k,p}} C^{-1} (\th) \dot{C}(\th) d \th= {\rm tr} \frac{1}{2 \pi \ii}\: \int_{\gamma_{k,p}} P^{-1} (\th) \chi \dot{C}(\th) d \th + \co_p(|\th|^{N - p}).$$
since $\tilde{C}^{-1}(\th) (1- \chi)$ is holomorphic. Next write
$$P^{-1}(\th) \chi \dot{C}(\th) = ( 1-\tilde{\alpha})P^{-1}(\th) \chi \dot{C}(\th) + \tilde{\alpha} P^{-1}(\th) \chi \dot{C}(\th)$$
and observe that $\widetilde{WF}(1- \tilde{\alpha}) \cap \widetilde{WF}(\chi) = \emptyset.$ Thus the norm of first term on the right hand side is estimated by $\co(|\th|^{\infty}).$ For the second one we apply the cyclicity of the trace and obtain
$$P^{-1} (\th ) \chi \Bigl(\dot{C}(\th) - \dot{P}(\th)\Bigr)\tilde{\alpha} + P^{-1}(\th) \chi \dot{P}(\th) \tilde{\alpha}.$$
By using Cauchy formula for the derivative $\frac{\pa}{\pa \th}(C(\th) - P(\th))$ and (\ref{eq:6.9}), we deduce that  $P^{-1}(\th) \chi\Bigl(\dot{C}(\th) - \dot{P}(\th)\Bigr)\tilde{\alpha} = \co(|\th|^{N- p}).$ To handle the term
$P^{-1}(\th) \chi \dot{P}(\th) \tilde{\alpha}$ by the cyclicity of the trace we transfer the operator $\tilde{\alpha}$ on the left and conclude that
$${\rm tr} \frac{1}{2 \pi \ii}\: \int_{\gamma_{k,p}} P^{-1} (\th) \chi \dot{C}(\th) d \th = {\rm tr} \frac{1}{2 \pi \ii}\: \int_{\gamma_{k,p}} P^{-1} (\th) \chi \dot{P}(\th) d \th + \co(|\th|^{N- p}).$$
Finally, taking into account (\ref{eq:6.4}) and the analyticity of $M^{-1} ( 1- \chi)$, we give
$${\rm tr} \frac{1}{2 \pi \ii}\: \int_{\gamma_{k,p}} P^{-1} (\th) \chi \dot{P}(\th) d \th = {\rm tr} \frac{1}{2 \pi \ii}\: \int_{\gamma_{k,p}} P^{-1} (\th) \dot{P}(\th) d \th + \co(|\th|^{\infty}).$$
The difference 
$${\rm tr} \frac{1}{2 \pi \ii}\: \int_{\gamma_{k,p}} C^{-1} (\th) \chi \dot{C}(\th) d \th - {\rm tr} \frac{1}{2 \pi \ii}\: \int_{\gamma_{k,p}} P^{-1} (\th) \chi \dot{P}(\th) d \th$$
 is a negligible term, hence this difference is zero. Repeating the argument in \cite{SjV}, \cite{P2}, we obtain a bijection $(0, h_0] \ni h_k \Leftrightarrow \mu_j \in (\sigma_p(G) \cup {\rm Res}\:(G))$
and we must count the negative eigenvalues $\mu_k(r^{-1})$ of $P(r^{-1}),\: r \geq C_{\gamma}$. By the well known formula (see for instance, Theorem 10.1 in \cite{DS}) we have
\begin{equation} \label{eq:6.12}
\sharp\{k:\:\mu_k(r^{-1}) \leq 0\} = \frac{r^{d-1}}{(2 \pi )^{d-1}}\int_{p_1((x', \xi') \leq 0} dx'd\xi' + \co_{\gamma}(r^{d-2}).
\end{equation} 
Applying (\ref{eq:5.7}), the integration is over $\{(x', \xi'):\:r_0(x', \xi') \leq 1 - \gamma^2(x')\}$  and we obtain the leading term in (\ref{eq:1.6}). This completes the proof of Theorem 1.2.

\appendix
\renewcommand{\theprop}{A.\arabic{prop}}
\renewcommand{\therem}{A.\arabic{rem}}  
\renewcommand{\theequation}{\arabic{section}.\arabic{equation}}
\section*{ Appendix }  \renewcommand{\theequation}{A.\arabic{equation}}
\setcounter{equation}{0}

In this Appendix we assume that  $K = B_3 = \{x \in \R^3: |x| \leq 1\}$  and  $ \gamma \geq 0$ is a constant. We will  prove the following
\begin{prop}For $K = B_3$ and $0 < \gamma < 1$ the operator $G$ has no eigenvalues.
\end{prop}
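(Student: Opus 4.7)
The plan is to exploit the spherical symmetry and the constancy of $\gamma$ to separate variables, reducing the eigenvalue problem to a family of one-dimensional transcendental equations indexed by the spherical-harmonic order $n$, and then to show that none of these equations has a root with $\Re\lambda<0$.

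First I would decompose any putative eigenfunction as $f_1(r,\omega) = \sum_{n,m} c_{n,m} R_n(r) Y_n^m(\omega)$. Since $\gamma$ is constant the modes decouple; each non-trivial radial component $R_n$ must satisfy
\begin{equation*}
R_n'' + \tfrac{2}{r}R_n' - \tfrac{n(n+1)}{r^2}R_n = \lambda^2 R_n, \quad r>1, \qquad R_n'(1) = \lambda\gamma R_n(1),
\end{equation*}
together with the $\lambda$-incoming condition, which for $\Re\lambda<0$ forces $R_n$ to decay exponentially at infinity. The unique such radial solution (up to a multiplicative constant) is $R_n(r) = k_n(-\lambda r)$, with $k_n$ the modified spherical Bessel function of the second kind. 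Setting $w := -\lambda$ (so $\Re w > 0$), the boundary condition becomes the transcendental equation
\begin{equation*}
k_n'(w) + \gamma\, k_n(w) = 0.
\end{equation*}

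Using the classical formula $k_n(w) = \tfrac{\pi}{2}\,e^{-w}\theta_n(w)/w^{n+1}$, where $\theta_n$ is the reverse Bessel polynomial of degree $n$ (all of whose coefficients are strictly positive and all of whose zeros lie in $\{\Re w<0\}$), this reduces to the polynomial identity
\begin{equation*}
P_n(w) := (1-\gamma)\,w\,\theta_n(w) + (n+1)\,\theta_n(w) - w\,\theta_n'(w) = 0.
\end{equation*}
A direct expansion shows every coefficient of $P_n$ is a positive linear combination of the coefficients of $\theta_n$ when $0<\gamma<1$; in particular $P_n$ has no positive real root. More transparently, the integral representation $K_{n+1/2}(w) = \int_0^{\infty} e^{-w\cosh t}\cosh((n{+}\tfrac12)t)\,dt$ gives $-k_n'(w)/k_n(w)>1>\gamma$ for every real $w>0$. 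The cases $n=0,1,2$ reduce to linear, quadratic, and cubic polynomials for which the elementary Routh--Hurwitz criterion, applied to the explicit coefficients, immediately confirms Hurwitz stability.

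The main obstacle is establishing Hurwitz stability of $P_n$ for \emph{every} $n\geq 0$ and every $\gamma\in(0,1)$. I would attempt this via the argument principle applied to the meromorphic function $H_n(w):=\gamma + k_n'(w)/k_n(w)$, integrated over the boundary of a large right half-disc, with a small semicircular indentation at the origin to avoid the simple pole of $k_n'/k_n$ there. Since the reverse Bessel polynomial $\theta_n$ is itself Hurwitz-stable, $k_n$ has no zeros in $\{\Re w\geq 0\}$, and $H_n$ has no poles inside the contour. The asymptotics $k_n(w) \sim \tfrac{\pi}{2}e^{-w}/w^{n+1}$ give $H_n(w) \to \gamma - 1 < 0$ on the semicircular arc, while on the imaginary axis the identity $k_n(\ii\tau) \propto h_n^{(1)}(\tau)$ (with $h_n^{(1)}$ the spherical Hankel function of the first kind) lets one track the change of argument of $H_n$ explicitly. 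A careful winding-number computation should yield $0$, so $H_n$ has no zero in $\Re w>0$ --- equivalently $P_n$ has no root there --- ruling out eigenvalues of $G$ in every spherical-harmonic mode.
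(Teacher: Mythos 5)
Your reduction is sound and in fact mirrors the paper's: separation of variables in spherical harmonics, the decaying radial solution expressed through $k_n$ (equivalently the paper's $h_n^{(1)}$ with $\lambda=\ii\mu$), and the boundary condition reduced, via the polynomial factor of $k_n$ (the reverse Bessel polynomial, the paper's $R_n$), to showing that a degree-$(n+1)$ polynomial with positive coefficients has no zero in $\{\Re w\geq 0\}$ for every $n$ and every $\gamma\in(0,1)$. Up to the substitution $w\mapsto 1/(2w)$ your $P_n$ is exactly the paper's $B_n(w;\gamma)$.

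The gap is that you never actually prove this Hurwitz-stability statement, which is the whole content of the proposition. Positivity of the coefficients of $P_n$ and the inequality $-k_n'(w)/k_n(w)>1>\gamma$ only exclude zeros on the positive real axis, and the Routh--Hurwitz check covers $n\leq 2$; for general $n$ you propose the argument principle for $H_n=\gamma+k_n'/k_n$ on a right half-disc, but the decisive step --- the computation showing that the total change of argument is zero --- is left as ``should yield $0$''. That computation is not routine: along the imaginary axis you must first prove $H_n(\ii\tau)\neq 0$ for $\tau\neq 0$ (this is where the paper invokes its Section 2 result that there are no eigenvalues or incoming resonances on $\ii\R\setminus\{0\}$, or one needs a Wronskian-type identity for $h_n^{(1)}$), then control the variation of $\arg H_n(\ii\tau)$ over all of $\R$, and finally account for the indentation at $w=0$, where $H_n(w)\sim -(n+1)/w$ contributes a nonzero change of argument that must be balanced. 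None of this is carried out, so the case of general $n$ remains open in your write-up. The paper closes exactly this step by a different device: a homotopy $\gamma\mapsto\epsilon\gamma$, $\epsilon\in[0,1]$, together with Fujiwara's bound giving a root bound uniform in $\epsilon$ and uniform non-vanishing of $B_n$ on a fixed contour (half-circle of radius $2$ plus the imaginary segment), so that the root-counting integral $q_n(\epsilon)$ is continuous, hence constant, and equals the Neumann value $q_n(0)=0$. Either complete your winding-number computation in detail (including the boundary non-vanishing and the contribution of the pole at the origin) or adopt such a deformation-to-Neumann argument; as written, the proof is incomplete precisely at its crux.
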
 
\begin{proof}
Consider the dissipative problem
\begin{equation} \label{eq:A.1}
\begin{cases}  (-\Delta + \lambda^2)u =0 \: {\rm in}\: |x| > 1,\\
\pa_r u- \lambda \gamma u = 0,\: {\rm on}\: |x| = 1,\\
u- \lambda-{\rm incoming}.
\end{cases}
\end{equation}
Set in (\ref{eq:A.1}) $\lambda = \ii \mu$. The incoming solution of (\ref{eq:A.1}) in polar coordinates $(r, \omega) \in \R^+ \times \Ss^2$ has the form
$$u(r, \omega, \mu) = \sum_{n = 0}^{\infty} \sum_{m = - n}^{n} a_{n, m} \frac{h^{(1)}_n(\mu r)}{h^{(1)}_n(\mu)} Y_{n, m}(\omega).$$
Here  $h^{(1)}_n(r) = \frac{H^{(1)}_{n+ 1/2}(r)}{r^{1/2}}$ are the spherical (modified) Hankel functions of first kind, 
$Y_{n, m}(\omega)$ are the eigenfunctions of the Laplace-Beltrami operator $- \Delta_{\Ss^2}$ with eigenvalues $n(n+1)$ and
$$u\vert_{|x| = 1} = f(\omega) = \sum_{n = 0}^{\infty} \sum_{m = -n}^{n} a_{n, m} Y_{n, m}(\omega).$$
To satisfy the boundary condition we must have
$$C(n; \mu, \gamma) a_{n, m} = 0,\: \forall n, \: -n \leq m \leq n,$$
where
$$C(n; \mu, \gamma) =\pa_r \Bigl(\frac{h^{(1)}_n(\mu r)}{h^{(1)}_n(\mu)}\Bigr)\Big\vert_{r = 1} - \ii \mu \gamma, \: n \in \N, \Im \mu > 0.$$

Our purpose is to prove that $C(n; \mu, \gamma) \neq 0$ for $\Im \mu > 0$ and all $n \in \N$. This implies $a_{n, m} = 0$ and $f \equiv 0.$ Hence $u \equiv 0$ since the Dirichlet problem has no eigenvalues.
We have the representation
$$h_n^{(1)}(x) = (-1)^{n+ 1} \frac{e^{\ii x}}{x} R_n\Bigl(\frac{\ii}{2x}\Bigr)$$
with
$$R_n(z) := \sum_{m = 0}^n \frac{(n + m) !}{(n - m)!}\frac{z^m}{m !} = \sum_{m= 0}^n c_m z^m.$$
It is well known that the zeros of $H^{(1)}_{n + 1/2}(z)$ are located in the domain $\Im z < 0$, so $R_n(z) \neq 0$ for  $\Re z \geq 0.$
Taking the derivative with respect to $r$, we obtain
$$C(n; \mu, \gamma) = (1 - \gamma) \ii \mu - \sum_{m = 0}^n \frac{(m+ 1) ( n+ m)!}{m ! (n - n)!} \Bigl(\frac{\ii} {2\mu}\Bigr)^m \Bigl(R_n (\frac{\ii}{2 \mu})\Bigr)^{-1}.$$
Setting $w = \frac{\ii}{2 \mu},$ we will study for $\Re w > 0$ the equation
\begin{align} \label{eq:A.2}
-C\Bigl(n; \frac{\ii}{2 w}, \gamma\Bigr)= \frac{1 - \gamma}{2w} +   \sum_{m = 0}^n (m+ 1) c_m w^{m}(R_n(w))^{-1}  \nonumber \\
= \frac{(1 - \gamma) R_n(w) + \sum_{m = 0}^{n} 2 (m + 1)c_m(\gamma) w^{m + 1}}{2 w R_n(w)} =  \frac{\sum_{m= 0}^{n+1}b_m(\gamma) w^m} {2 w R_n(w)} = 0.
\end{align}
The coefficients $b_m(\gamma)$ have the form
$$b_m(\gamma) = (1- \gamma) c_m + 2 m c_{m-1},\: m = 1,..., n,$$
$$b_{n+1}(\gamma) = 2(n+1)c_n = 2(n+1) \frac{(2n)!}{n!},\: b_0(\gamma) = 1- \gamma.$$
We fix $0 < \gamma < 1$ and for $0\leq \ep \leq 1$ consider the polynomial 
$$B_n(w; \ep) : = \sum_{k= 0}^{n+1} b_k(\ep\gamma) w^k.$$
Let $w_j(n; \ep) = 1,...,n+1$ be the roots of the equation $B_n(w; \ep) = 0$ with respect to $w$.
 By using the result of Fujiwara (see \cite{Mar}), we obtain an upper bound for $|w_j(n; \ep)|,\: j = 0,..., n+1$ given by
$$2 \max \Big\{ \Big | \frac{b_n(\ep \gamma)}{b_{n+1}}\Big |, \Big |\frac{b_{n-1}(\ep\gamma)}{b_{n+1}}\Big |^{1/2},...,\Big | \frac{ b_1(\ep \gamma)}{b_{n+1}} \Big |^{1/n}, \Big|\frac{b_0(\ep\gamma)}{2b_{n+1}}\Big |^{1/(n+1)}\Big\} = M_n(\ep).$$

Since $b_{n+1}$ is independent of $\gamma$ and $\ep$ and $0 < b_k(\ep\gamma) \leq b_k(0), \: 0 \leq \ep \leq 1,\: k = 0,...,n$, we deduce 
$$M_n(\ep) \leq  M_n(0),\: 0 \leq \ep \leq 1.$$
 To estimate $M_n(0)$, observe that the sequence $b_{n+1}(\ep\gamma) > b_n(\ep \gamma) > ...> b_1(\ep \gamma) > 1 -\ep\gamma$ is decreasing. Indeed,
 $$c_k - c_{k-1} = \frac{(n + k -1)!}{(k-1)! (n- k)!} \Bigl[ \frac{n+ k}{k} - \frac{1}{n - k + 1}\Bigr]= \frac{(n + k -1)!}{k! (n- k + 1)!} (n^2 - k^2 + n).$$
  Thus the maximal term in the above upper bound becomes
$$ \frac{2b_n(\ep \gamma)}{b_{n+1}} = \frac{( 1- \ep\gamma)c_n + 2n c_{n-1}}{(n+1)c_n}= \frac{1- \ep\gamma}{n +1} + \frac{n}{n+1} = 1 - \frac{\ep\gamma}{n+1}\leq 1$$
and we take $M_n(0) = 1.$

Consider the contour $\omega = \alpha \cup \beta \subset \{w\in \C:\:\Re w \geq 0\}$, where
 $$\alpha = \{w \in \C: |w| = 2, \Re w \geq 0\},\:\beta = \{ w = \ii y \in \C: |y| \leq 2\}.$$
By using the factorisation of $B_n(w; \ep),$ we give
\begin{equation} \label{eq:A.3}
|B_n(w; \ep)| \geq b_{n+1},\: \forall w \in \alpha,\:0 \leq \ep \leq 1.
\end{equation}
On the other hand, $B_n(w; \ep) \neq 0$ for $w \in \beta,\: 0 \leq \ep \leq 1.$ Indeed, $B_n(0; \ep) = 1- \ep \gamma \neq 0$ and for $w = \ii y, y \neq 0$ one has $C(n; \frac{1}{2 y}, \ep\gamma)  \neq 0$. In fact, $\R \ni \frac{1}{2 y} =- \ii \lambda$ yields $\lambda \in \ii \R \setminus \{0\}$ and in Section 2 it was shown that there are no eigenvalues and incoming resonances on $\ii \R \setminus \{0\}.$  If $C(n; -\ii \lambda_0, \ep\gamma) =  0$ for some $\lambda_0 \in \R$ and some $n$, taking $a_{n, m} \neq 0$ and $a_{k, m} = 0, \: k \neq n,$ we obtain a function $f \neq 0$ such that $\cc(-\ii \lambda_0) f = 0$ which is impossible. We claim that there exists $\delta_0(n) > 0$ such that
\begin{equation}\label{eq:A.4}
|B_n(w; \ep)| \geq \delta_0(n),\: \forall w \in \beta,\: 0 \leq \ep \leq 1.
\end{equation}
 Assume (\ref{eq:A.4}) not true. Then there exists a sequence $\{\ii y_m,\ep_m\}\in \beta \times [0, 1]$ such that
$$|B_n(\ii y_m; \ep_m)|\leq \frac{1}{m}, \: \forall m \in \N.$$
Choosing convergent subsequence $\{y_{m_k}, \ep_{m_k}\}$ and passing to limit $(y_{m_k}, \ep_{m_k}) \to (y_0, \ep_0) \in \beta \times [0, 1]$, we obtain a contradiction with $B_n(\ii y_0, \ep_0) \neq 0.$ This proves the claim.

Now consider the integral
$$q_n(\ep) = \frac{1}{2 \pi \ii} \int_{\omega} \frac{ B_n'(w; \ep)}{B_n(w; \ep)} dw,\: 0 \leq \ep \leq 1.$$
Here $q_n(\ep) \in \N$ is equal to the number of the roots of $B_n(w; \ep) = 0$ counted with their multiplicities lying in the interior of the domain bounded by $\omega$. We will prove that $q_n(\ep)$ depends continuously of $\ep \in [0, 1],$ hence $q_n(\ep)$ is constant.
Let $\ep_1, \ep_2 \in [0, 1].$ Write
$$\frac{ B_n'(w;\ep_1)}{B_n(w;\ep_1)} - \frac{ B_n'(w; \ep_2)}{B_n(w; \ep_2)}$$
$$=B_n(w; \ep_1)^{-1} B_n(w; \ep_2)^{-1} \Bigl([B_n'(w; \ep_1) - B_n'(w; \ep_2)]B_n(w; \ep_2)$$
$$+ B_n'(w; \ep_2)[B_n(w; \ep_2) - B_n(w; \ep_1)]\Bigr).$$
On the other hand,
$$B_n(w; \ep_1) - B_n(w; \ep_2)=  (\ep_2- \ep_1)\gamma \sum_{m = 0}^n c_m w^m$$
and a similar equality holds for $B_n'(w; \ep_1) - B_n'(w; \ep_2).$ Taking into account (\ref{eq:A.3}) and (\ref{eq:A.4}), we have an upper bound
\begin{equation}\label{eq:A.5}
|B_n^{-1}(w, \ep) | \leq \max\big\{ \frac{1}{b_{n+1}}, \frac{1}{\delta_0(n)}\big\},\: \forall w \in \omega,\: 0 \leq \ep \leq 1
\end{equation}
 and we conclude that $q_n(\ep)$ is continuous. This implies $q_n(\ep) = q_n(0) = 0$ since for Neumann problem ($\gamma = 0$) we have no roots of the equation $B_n(w; 0)= 0$ with $\Re w > 0.$ 
\end{proof}
\begin{rem}
 A shorter proof of Proposition $A.4$ may be given by using the continuity of the roots $w_j(n; \ep)$ with respect to $\ep.$ If we have a root $w_j(n; \ep)$ with $\Re w_j(n; \ep) > 0$ for some $0 < \ep < 1,$ then for $\ep \searrow 0$ we obtain $\re w_j(n;0) \geq 0, \: w_j(n; 0) \neq 0$ since the roots $w_j(n;\ep)$ cannot cross the imaginary axis. This leads to a contradiction with the fact that $B_n(w; 0) \neq 0$ in $\{\Re w \geq 0\}.$ The above proof is based on complex analysis and the same approach could be useful for the general case of strictly convex obstacles.
\end{rem}

We may apply another perturbation argument. Consider the polynomial 
$$F_n(w; \eta) = \sum_{m = 0}^{n+1} b_m(1 + \eta) w^m$$  for $|\eta| \ll 1$. Let $w_j(\eta)$ be the roots of $F_n(w; \eta) = 0$ with respect to $w.$ Clearly, for $|\eta|$ small enough we have a simple root $w(\eta)$ such that $w(0) = 0$ and  $\frac{\pa F_n}{\pa w}(0; 0) = 2.$ The other roots of $F_n(w; 0) = 0$ are different from 0 and they have  strictly negative real part (see Appendix in \cite{P1}).  The root $w(\eta)$ must be real, otherwise we will have two perturbed roots for $\eta$ close to 0. Taking the derivative with respect to $\eta$, we obtain
$$\frac{\pa F_n}{\pa w}(w(\eta); \eta) w'(\eta) + \frac{\pa F_n}{\pa \eta}(w(\eta); \eta) = 0$$
and for $\eta = 0$ we deduce $w'(0) = 1/2.$ This implies $\pm w(\eta) > 0$ for $\pm \eta > 0$ and small $|\eta|$ ($w(\eta) < 0$ for $\eta < 0$ follows also from the fact that $B_n(w; \gamma) = 0$ with $0 < \gamma < 1$ has no positive real roots).
For  $\gamma = 1 + \eta > 1$ the root $w(\eta)$ yields an eigenvalue $\lambda = - \frac{1}{2 w(\eta)} < 0$, while for $\gamma = 1 - \eta < 1$ we have an incoming resonance $\lambda = - \frac{1}{2 w(-\eta)} > 0.$ The other roots $w_j(\eta)$ with $\Re w_j(0) < 0$  remain in the half plane $\{\Re z < 0\}$ for small $|\eta|.$

\vspace{0,5cm}

  \end{document}